\newcommand{\bs}{\boldsymbol}
\newcommand{\bX}{\bs X}
\newcommand{\bY}{\bs Y}
\newcommand{\bU}{\bs U}
\newcommand{\bx}{\bs x}
\newcommand{\by}{\bs y}
\newcommand{\bu}{\bs u}
\newcommand{\mI}{\mathsf I}
\newcommand{\mC}{\mathsf C}
\newcommand{\mH}{\mathsf H}
\newcommand{\mF}{\mathsf F}
\newcommand{\mG}{\mathsf G}
\newcommand{\mL}{\mathsf L}
\newcommand{\mM}{\mathsf M}
\newcommand{\mJ}{\mathsf J}
\newcommand{\cF}{\mathcal F}
\newcommand{\cT}{\mathcal T}
\newcommand{\cS}{\mathcal S}
\newcommand{\bth}{\bs \theta}
\newcommand{\bTh}{\bs \Theta}
\newcommand{\bxi}{\bs \xi}
\newcommand{\bXi}{\bs \Xi}
\newcommand{\R}{\mathbb{R}}
\newcommand{\tr}[1]{f\left(\bx_{#1} | \bx_{#1\text{-}1},\bth\right)}
\newcommand{\lk}[1]{g\left(\by{}_{#1} | \bx_{#1},\bth\right)}
\newcommand{\dd}{\mathrm{d}}
\newcommand{\lerr}[1]{\| #1  \|_{L^2}} 
\newcommand{\dhd}[2]{D_{\rm H} (#1 , #2)} 
\newcommand{\df}[2]{D_{f} (#1 \| #2)} 
\newcommand{\tminusone}{t\text{-}1}
\newcommand{\jminusone}{j\text{-}1}
\newcommand{\refover}[2]{\overset{\hspace{-6pt}\eqref{#1}\hspace{-6pt}}{#2}}
\def\mathcenterto#1#2{\mathclap{\phantom{#1}\mathclap{#2}}\phantom{#1}}
\let\old@widetilde\widetilde
\def\widetildeto#1#2{\mathcenterto{#2}{\old@widetilde{\mathcenterto{#1}{#2\,}}}}
\let\old@widehat\widehat
\def\widehatto#1#2{\mathcenterto{#2}{\old@widehat{\mathcenterto{#1}{#2\,}}}}
\renewcommand{\widetilde}[1]{\widetildeto{o}{#1}}
\renewcommand{\widehat}[1]{\widehatto{o}{#1}}
\def\app#1#2{%
  \mathrel{%
    \setbox0=\hbox{$#1\sim$}%
    \setbox2=\hbox{%
      \rlap{\hbox{$#1\propto$}}%
      \lower1.1\ht0\box0%
    }%
    \raise0.25\ht2\box2%
  }%
}
\def\approxprop{\mathpalette\app\relax}
\newcounter{algcounter}
\newenvironment{myalg}[1]
{\refstepcounter{algcounter}  
\medskip\hrule\medskip
{\noindent Algorithm~\thealgcounter: #1}\vspace{4pt}
}
{
\medskip\hrule\medskip
}
\DeclareRobustCommand{\svdots}{
  \vbox{%
    \baselineskip=0.2\normalbaselineskip
    \lineskiplimit=0pt
    \hbox{.}\hbox{.}\hbox{.}%
    \kern-0.2\baselineskip
  }%
}
\newtheorem{assumption}{Assumption}
\begin{document}

\title{Tensor-train methods for sequential state and parameter learning in state-space models}

\author{\name Yiran Zhao \email{yiran.zhao@monash.edu} \\
\addr School of Mathematics, Monash University\\
Victoria 3800, Australia\\
\name Tiangang Cui \email{tiangang.cui@sydney.edu.au} \\
\addr School of Mathematics and Statistics, The University of Sydney\\
New South Wales 2006, Australia
}

\editor{}

\maketitle

\begin{abstract}

We consider sequential state and parameter learning in state-space models with intractable state transition and observation processes. By exploiting low-rank tensor train (TT) decompositions, we propose new sequential learning methods for joint parameter and state estimation under the Bayesian framework. 
Our key innovation is the introduction of scalable function approximation tools such as TT for recursively learning the sequentially updated posterior distributions. 
The function approximation perspective of our methods offers tractable error analysis and potentially alleviates the particle degeneracy faced by many particle-based methods. In addition to the new insights into the algorithmic design, our methods complement conventional particle-based methods. 
Our TT-based approximations naturally define conditional Knothe--Rosenblatt (KR) rearrangements that lead to parameter estimation, filtering, smoothing and path estimation accompanying our sequential learning algorithms, which open the door to removing potential approximation bias. 
We also explore several preconditioning techniques based on either linear or nonlinear KR rearrangements to enhance the approximation power of TT for practical problems. 
We demonstrate the efficacy and efficiency of our proposed methods on several state-space models, in which our methods achieve state-of-the-art estimation accuracy and computational performance. 
\end{abstract}

\begin{keywords}
  tensor train, Knothe--Rosenblatt rearrangement, state-space models, sequential Monte Carlo, uncertainty quantification, transport maps
\end{keywords}

\allowdisplaybreaks


\section{Introduction}

State-space models have been widely used in mathematical and statistical modelling to analyze time-varying complex phenomena \citep{cappe2009inference,kantas2009overview}. 
Examples include time series analysis in finance, temporal pattern recognition in bioinformatics, forecast in meteorology, and more. A typical state-space model is also referred to as a hidden Markov model, which consists of two stochastic processes $\{\bX_{t}\}_{t\geq 0}$ and $\{\bY_{t}\}_{t\geq 1}$. The state transition process $\{\bX_{t}\}_{t\geq 0}$ is an $\mathcal{X}$-valued latent Markov process specified by a conditional transition density 
\begin{equation}\label{eq:state_transition}
(\bX_t | \bX_{0:\tminusone} = \bx_{0:\tminusone})  \equiv (\bX_t | \bX_{\tminusone} = \bx_{\tminusone}) \sim f(\bx_t |\bx_{\tminusone}, \bth),
\end{equation}
and an initial density $p(\bx_0 |\bth)$. Here $\bX_t$ represents the hidden state of the underlying system at an integer-valued time index $t\geq 0$, and the notation $\bx_{i:j}$ denotes vectors $(\bx_i, \bx_{i+1}, \ldots, \bx_j)$ from a sequence $\{\bx_t\}_{t \geq 0}$. The $\mathcal{Y}$-valued observation process $\{ \bY_{t}\}_{t\geq 1}$ is characterized by the likelihood function $ \lk{t} $ as
\begin{equation}\label{eq:observation}
	(\bY_t | \bX_{0:t} =  \bx_{0:t}, \bY_{1:\tminusone} = \by{}_{1:\tminusone}) \equiv ( \bY_t | \bX_{t} = \bx_{t}) \sim g(\by{}_t |\bx_t, \bth), 
\end{equation}
where $\bY_t$ represents the observable at time index $t > 0$. In \eqref{eq:state_transition} and \eqref{eq:observation}, $\bth \in \Uptheta$ is a set of parameters that govern the state transition process and the observation process. 

\begin{example}{(Volatility of financial instruments)} \label{eg:sv}
The stochastic volatility model consists of  the return of an asset $\{\bY_t\}_{t \geq 1}$ and the logarithm of its squared volatility $\{\bX_t\}_{t \geq 0}$. 
In the simplest setup, the logarithm of the squared volatility $\{\bX_t\}_{t \geq 0}$ is a scalar-valued autoregressive process with order one, i.e., an AR$(1)$ process, while the scalar-valued return $\{\bY_t\}_{t \geq 1}$ is the observable determined by the volatility. 
We arrange the unknown parameters to appear in both the state transition process and the observation process. 
The system is described by
\begin{equation*}
	\left\{\begin{array}{rl}
		\bX_{t} \; = \! & \gamma \bX_{\tminusone}+ \sigma\varepsilon^{(x)}_{t} \vspace{3pt}\\
		\bY_{t} \; = \! & \varepsilon^{(y)}_{t}\beta\,\exp(\frac12\bX_t)
	\end{array}\right. ,
\end{equation*}
where $\bTh=(\gamma, \sigma, \beta)$ are model parameters, $\varepsilon^{(x)}_{t}$ and $\varepsilon^{(y)}_{t}$ are independent and identically distributed (i.i.d.) standard Gaussian random variables, and the initial state is given as
$
\bX_{0}\sim \mathcal{N}(0,\frac{\sigma^2}{1-\gamma^{2}}).
$
The goal is to estimate the parameter $\bTh$ and the hidden states $\{\bX_t\}_{t \geq 0}$ from the observed returns $\{\bY_t\}_{t \geq 1}$.
\end{example}

\subsection{Sequential learning problems}

Following the state-space model defined in \eqref{eq:state_transition} and \eqref{eq:observation}, random variables $(\bTh, \bX_{0:t}, \bY_{1:t})$ have the joint density
\begin{equation} 
	p(\bth, \bx_{0:t}, \by{}_{1:t}) = p(\bth)\, p(\bx_0 |\bth)\prod_{j=1}^{t} \Big( f(\bx_j |\bx_{\jminusone}, \bth) \, g(\by{}_j |\bx_j, \bth) \Big)
	\label{eq:joint_pdf},
\end{equation}
where $ p(\bth) $ and $ p(\bx_0 |\bth) $ are prescribed prior densities.
Conditioned on all available data $\bY_{1:t} = \by_{1:t}$ at time $t$, the trajectory of the states $\bX_{0:t}$ and the parameter $\bTh$ jointly follow the posterior density
\begin{equation} 
	p(\bth, \bx_{0:t} | \by{}_{1:t}) = \frac{p(\bth,\bx_{0:t}, \by{}_{1:t})}{p(\by{}_{1:t})} ,\label{eq:joint_post_pdf}
\end{equation}
where $p(\by{}_{1:t})$ is an unknown constant commonly referred to as the evidence. We aim to design sequential algorithms that  simultaneously solve the following inference problems at each time $t$: 
\begin{itemize}[leftmargin=18pt]
\item \textbf{Filtering} that infers the current state, which is the marginal conditional random variable
\begin{equation} \label{eq:filtering_prob}
	(\bX_t | \bY_{1:t} = \by{}_{1:t}) \sim p(\bx_{t} | \by{}_{1:t}) := \int p(\bth,\bx_{0:t} | \by{}_{1:t}) \dd \bx_{0:\tminusone} \dd \bth.
\end{equation} 
\item \textbf{Parameter estimation} that infers the unknown parameter, which is the marginal conditional random variable
\begin{equation} \label{eq:param_prob}
	(\bTh | \bY_{1:t} = \by{}_{1:t}) \sim p(\bth | \by{}_{1:t}) := \int p(\bth,\bx_{0:t} | \by{}_{1:t}) \dd \bx_{0:t}.
\end{equation} 
\item \textbf{Path estimation} that infers the trajectory of the states, which are the marginal conditional random variables 
\begin{equation} \label{eq:path_prob}
	(\bX_{0:t} | \bY_{1:t} = \by{}_{1:t}) \sim p(\bx_{0:t} | \by{}_{1:t}) := \int p(\bth,\bx_{0:t} | \by{}_{1:t}) \dd \bth.
\end{equation} 
\item \textbf{Smoothing.} As a result of increasing state dimensions and the observation size, path estimation becomes increasingly challenging over time. Smoothing is a related learning problem focusing on the lower-dimensional marginal conditional random variable
\begin{equation} \label{eq:smoothing_prob}
	(\bX_k | \bY_{1:t} = \by{}_{1:t}) \sim p(\bx_k | \by{}_{1:t}) := \int p(\bth,\bx_{0:t} | \by{}_{1:t}) \dd \bx_{0:k\text{-}1} \dd \bx_{k+1:t} \dd \bth
\end{equation} 
for some previous time index $k < t$.
\end{itemize}
In the presence of unknown parameters and stochastic noises in the state transition density \eqref{eq:state_transition}, all the above-mentioned learning problems are essentially characterizations of marginal random variables. 
Furthermore, the random variables of interest are intractable, in the sense that the characteristic functions and moments often do not admit analytical forms, and it is infeasible to directly simulate these random variables. 
Thus, we need to design numerical methods to solve these problems. In principle, algorithms for solving these problems need to be \emph{sequential} by design---the computation of new solutions to the problems \eqref{eq:filtering_prob}--\eqref{eq:smoothing_prob} at time $t$ only relies on either previous solutions at $t-1$ for forward algorithms, or solutions at $t+1$ for backward algorithms. We refer readers to \cite{evensen2022data,kantas2009overview,reich2015probabilistic} and \cite{sarkka2013bayesian} for further details and references of these sequential learning problems. 

\subsection{Related work and our contributions}

Assuming the parameter $\bth$ is known, the classical filtering problem aims to estimate the distribution of the current state conditioned on the data available up to $t$, i.e., $(\bX_t | \bTh = \bth, \bY_{1:t} = \by{}_{1:t})$. For linear models with Gaussian noises, the filtering density yields a closed-form solution given by the classical Kalman filter \citep{kalman1960new}. The extended Kalman filter \citep{anderson2012optimal, einicke1999robust} and the ensemble Kalman filter \citep{evensen2003ensemble} generalize the Kalman filter to nonlinear problems. These generalizations utilize linearization and Monte Carlo sampling to create Gaussian approximations and sample-based approximations of posterior densities. As a result, these methods are flexible to implement and often robust with respect to dimensionality. However, the Gaussian assumption used by various Kalman filters introduces unavoidable approximation errors to the resulting filtering density, which makes them only suitable for tracking the states for nonlinear problems.

Beyond the Gaussian form, sequential Monte Carlo (SMC) methods recursively maintain a set of weighted particles to fully characterize the filtering density \eqref{eq:filtering_prob} for nonlinear problems. In the simplest form, the bootstrap filter \citep{gordon1993novel} updates weighted particles using the state transition process and reweighs them using the likelihood function with newly collected data. It is well-known that the weights of the bootstrap filter tend to degenerate over time, a phenomenon commonly referred to as the particle degeneracy \citep{doucet2009tutorial, snyder2008obstacles,liu2001theoretical, pitt1999filtering}. To mitigate the particle degeneracy, the auxiliary particle filter \citep{pitt1999filtering, pitt2001auxiliary} and various advanced resampling techniques such as residual resampling \citep{carpenter1999improved}, systematic resampling \citep{kitagawa1996monte}, and resample-move \citep{gilks2001following} have been developed. See \cite{del2012adaptive,doucet2009tutorial,reich2015probabilistic} and \cite{maskell2002tutorial} for comprehensive reviews of these techniques.

An emerging trend is the development of transport map methods that overcome the particle degeneracy by transforming the forecast particles into equally weighted particles following the filtering density. The ensemble transform filter \citep{reich2013nonparametric} builds such transformations via a discrete optimal transport problem. The coupling technique of \cite{spantini2022coupling} relaxes the Gaussian assumption of the ensemble Kalman filter by building conditional Knothe--Rosenblatt (KR) rearrangements \citep{knothe1957contributions,rosenblatt1952remarks}. The implicit sampling method \citep{chorin2009implicit,morzfeld2012random} couples a reference density, e.g., a standard Gaussian, to a particular approximation of the filtering density via implicit maps. Methods based on neural networks are also developed. For example, \cite{gottwald2021combining} build random feature maps on delayed coordinates to accelerate filtering, and \cite{hoang2021machine} approximate conditional mean filters using neural networks.

One of our innovations is treating the filtering problem and other aforementioned sequential learning problems as recursive function approximation problems. This perspective opens the door to applying scalable function approximation tools in sequential learning. In particular, we employ tensor-train (TT) decompositions \citep{hackbusch2012tensor, oseledets2010tt, oseledets2011tensor} to design non-parametric solution ansatzes for recursively approximating probability densities in sequential learning. As a result, our new algorithms significantly generalize the linear Gaussian assumptions used by various Kalman filters and bypass sample-based empirical approximations used in various particle filters. This not only makes our algorithms computationally efficient but also capable of fully quantifying uncertainties in sequential learning problems. The computed TT decompositions in our algorithms naturally lead to a sequence of KR rearrangements, enabling us to design an accompanying particle filter to correct for biases due to function approximation errors. Moreover, we utilize these KR rearrangements to develop a backward sampling algorithm, in line with many particle smoothing methods---e.g., \cite{bresler1986two, briers2010smoothing, fearnhead2010sequential, godsill2004monte}---to solve the path estimation and smoothing problems.

Another significant challenge in sequential learning problems is to estimate unknown parameters, which intrinsically requires marginalizing over the state path to evaluate the posterior parameter density \eqref{eq:param_prob}. The particle Markov chain Monte Carlo (MCMC) method \citep{andrieu2010particle} and the method developed in \cite{sarkka2015posterior} use inner-loop particle filters or Gaussian filters to estimate the computationally intractable marginalized posterior parameter density $p(\bth |\by{}_{1:t})$, and then build outer-loop Markov chain transition kernels to sample posterior parameters. These methods are designed for batched data and thus may have difficulties updating the solution in an online manner with new observations. To estimate posterior parameters online, the SMC$^2$ method \citep{chopin2013smc2} and its variants---e.g., the nested particle filter \citep{crisan2018nested}---run an outer-loop particle filter to sequentially update the parameter distribution. Since these methods are based on particle filters, they often need a restart step to handle the particle degeneracy. As a result, they need to sacrifice either the computational complexity (SMC$^2$ has a computational complexity quadratic in time) or the rate of convergence (the nested particle filter has a convergence rate of $N^{-1/4}$, where $N$ is the sample size) in online estimation. 

Our new algorithms are directly applicable to the online parameter estimation problem. In fact, our new algorithms solve all four aforementioned sequential learning problems in an integrated manner by recursively approximating the joint posterior density of parameters and states over time. The separability of TT decompositions naturally leads to marginal posterior densities for solving the parameter estimation and filtering problems. This also allows us to build accompanying KR rearrangements to correct approximation biases and solve the smoothing and path estimation problems. More importantly, we show that the accumulation of these density approximation errors has a linear rate under mild assumptions. Within the same framework, we also present preconditioning techniques to improve the approximation power of TT decompositions. On a range of numerical examples, we show that our method outperforms the SMC$^2$ method and can produce meaningful results in real-world applications.

\subsection{Outline} 

We propose a basic version of the TT-based algorithm that recursively approximates the time-varying joint posterior densities in Section \ref{sec:background}. The separable form of TT provides approximations to the marginal densities required in learning problems \eqref{eq:filtering_prob}--\eqref{eq:smoothing_prob}. The basic TT-based algorithm is useful to outline the design principles and key steps of our proposed methods. However, the rank truncation used by TT may not preserve the non-negativity of density functions. This leads to difficulties in removing the approximation bias using techniques such as importance sampling. We overcome this limitation in Section~\ref{sec:estimation} by integrating the square-root approximation technique of \cite{cui2021deep} into the basic algorithm of Section~\ref{sec:background}. This preserves the recursive learning pattern of the basic algorithm for problems \eqref{eq:filtering_prob}--\eqref{eq:smoothing_prob} and leads to non-negative approximations by construction. More importantly, the separable form of the non-negative TT naturally leads to a sequence of KR rearrangements that couple reference random variables and the marginal random variables of interest. We utilize the resulting (conditional) KR rearrangements to design particle filters and particle smoothers accompanying TT approximations to remove estimation biases. 

The smoothing procedure accompanying our TT-based learning algorithm shares many similarities with the joint KR rearrangements developed in \cite{spantini2018inference}. 
In particular, our joint KR rearrangement for the simultaneous path and parameter estimation (see Section~\ref{sec:smoothing}) follows the same sparsity pattern as that of \cite{spantini2018inference}, as a result of the hidden Markov structure of the learning problem. 
Our key innovation lies in algorithmic design. 
The work of \cite{spantini2018inference} adopts a \emph{variational} approach that builds an approximate map by minimizing the statistical divergence of the pushforward of some reference density under the candidate map from the target density. 
The training procedure of the variational approach is often quite involved---the objective function presents many local minima and each optimization iteration requires repeated evaluations of the target density at transformed reference variables under the candidate map. In comparison, our algorithmic design utilizes multilinear function approximations to directly approximate marginal and conditional densities of interest. In addition to the benefit of bypassing nonlinear optimization, we can exploit the TT-based approximations to compute the \emph{exact} KR rearrangement of the \emph{approximate} density in both lower-triangular and upper-triangular forms. This enables us to solve the filtering problem \eqref{eq:filtering_prob} and the smoothing problem \eqref{eq:smoothing_prob} using different conditional KR rearrangements constructed from the same approximate density. 

In Section \ref{sec:error}, we discuss the error accumulation rate of our proposed algorithms. In Section \ref{sec:precondition}, we present a set of preconditioning techniques for our sequential learning problems to further enhance the approximation power of TT. 
These preconditioning techniques are also given in the form of KR rearrangements. In Section \ref{sec:numerics}, we present a set of numerical experiments to demonstrate the efficacy and efficiency of our proposed methods and compare them to particle-based methods. Our code is accessible online via \url{https://github.com/DeepTransport/tensor-ssm-paper-demo}.

\subsection{Notation}

The dimensionalities of the state space $\mathcal{X}$, the data space $\mathcal{Y}$, and the parameter space $\Uptheta$ are $m$, $n$, and $d$, respectively. We assume that the spaces $\mathcal{X}$, $\mathcal{Y}$, and $\Uptheta$ can be expressed as Cartesian products. We denote the $j$-th element of a vector $\bx$ by $x_j$. The index $t \in \mathbb{N}$ is specifically reserved for denoting the time throughout the paper. For a state random vector $\bX_t$ and its realization $\bx_t$ at time $t$, their $j$-th elements are denoted by $X_{t,j}$ and $x_{t,j}$, respectively. The same convention applies to the data vector $\bY_t$ and its realization $\by{}_t$.

For a vector $\bx_t \in \R^m$ and an index $j$, it is convenient to group a subset of elements as follows. The vector $\bx_{t, < j} = (x_{t,1}, \ldots, x_{t,\jminusone})$ collects the first $j-1$ elements, and the vector $\bx_{t, >j} = (x_{t,j+1}, \ldots, x_{t,m})$ collects the last $m-j$ elements. Similarly, we have $\bx_{t, \leq j} = (x_{t,1}, \ldots, x_{t,j})$, $\bx_{t, \geq j} = (x_{t,j}, \ldots, x_{t,m})$, and $\bx_{t,\leq m} \equiv \bx_{t,\geq 0} \equiv \bx_t$.

We consider probability measures absolutely continuous with respect to the Lebesgue measure. We denote normalized posterior probability densities in the sequential learning problems by $p$ and its unnormalized version by $\pi$. Approximations to these densities are denoted by $\hat p$ and $\hat \pi$, respectively. The Hellinger distance between random variables with densities $p$ and $\hat p$ is defined by
\[
 D_{\rm H}(p, \hat p ) = \Big(\frac{1}{2} \int \big(\sqrt{p(\bx)}-\sqrt{\hat p(\bx)}\big)^2\dd \bx \Big)^\frac12.
\]
We denote the $m$-dimensional uniform random variable on a unit hypercube by $\bXi \sim \mathrm{uniform}(\bxi; [0,1]^m)$, in which we drop $[0,1]^m$ in the definition when no confusion arises. 

Consider a diffeomorphism $\mathcal{S}{\,:\,} \mathcal{X} {\,\rightarrow\,} \mathcal{U}$, where $\mathcal{X}, \mathcal{U} \subseteq \R^m$.
The density of the transformed variable $\mathcal{S}(\bX)$ where $ \bX\sim p,$ is the \emph{pushforward} of $p$ under  $\mathcal{S}$, which takes the form 
\[
\mathcal{S}_\sharp\, p(\bu) = p \big(\mathcal{S}^{-1}(\bu)\big) \, \big| \nabla \mathcal{S}^{-1}(\bu)\big|.
\]
Similarly, the density of the transformed variable $\mathcal{S}^{-1}(\bU)$, where $ \bU \sim \eta $, is the \emph{pullback} of the density of $\bU$ under $\mathcal{S}$, which takes the form  
\[
\mathcal{S}^\sharp\, \eta(\bx) = \eta \big(\mathcal{S} (\bx)\big) \, \big| \nabla \mathcal{S}(\bx)\big|.
\]
Here $|\cdot|$ denotes the absolute value of the determinant of a matrix.


\section{TT-based recursive posterior approximation}\label{sec:background}

We first discuss the recursive formula outlining the design principle of sequential learning problems. We then introduce the TT decomposition and present a basic implementation of the TT-based sequential learning algorithm. 

\subsection{Recursive state and parameter learning}\label{sec:rec}

Following the definition of the joint density of $(\bTh, \bX_{0:t}, \bY_{1:t})$ in \eqref{eq:joint_pdf},  the density of the posterior random variables $(\bTh, \bX_{0:t} | \bY_{1:t}= \by{}_{1:t})$ also has a recursive form
\begin{equation}
	p(\bth, \bx_{0:t} | \by{}_{1:t}) = \frac{p(\bth, \bx_{0:\tminusone} | \by{}_{1:\tminusone}) f(\bx_t |\bx_{\tminusone}, \bth) g(\by{}_t |\bx_t, \bth)}{{p(\by{}_t |\by{}_{1:\tminusone})}}
	\label{eq:joint_post},
\end{equation}
where $p(\by{}_t |\by{}_{1:\tminusone})$ is a computationally intractable conditional evidence. Marginalizing both sides over $\bX_{0:t\text{-}2}$, the parameter $\bTh$ and the adjacent states $(\bX_{t}, \bX_{\tminusone})$ jointly follow the posterior density
\begin{equation} 
	p(\bth, \bx_{t} , \bx_{\tminusone} | \by{}_{1:t}) = \frac{p(\bth, \bx_{\tminusone} | \by{}_{1:\tminusone}) f(\bx_t |\bx_{\tminusone}, \bth) g(\by{}_t |\bx_t, \bth)}{p(\by{}_t |\by{}_{1:\tminusone})}.
	\label{eq:rec_post}
\end{equation} 
The above formula outlines basic steps needed by a sequential estimation algorithm to solve the filtering problem \eqref{eq:filtering_prob} and the parameter estimation problem \eqref{eq:param_prob}:
\begin{enumerate}[leftmargin=18pt]
	\item At time $t{-}1$, the posterior random variables $(\bTh, \bX_{\tminusone}|\bY_{1:\tminusone} = \by{}_{1:\tminusone})$ has the density $p(\bth, \bx_{\tminusone} | \by{}_{1:\tminusone})$.
	\item At time $t$, using the state transition density $f(\bx_t |\bx_{\tminusone}, \bth)$ and the likelihood function $g(\by{}_t |\bx_t, \bth)$, we can compute the joint posterior density $p(\bth, \bx_t, \bx_{\tminusone} | \by{}_{1:t})$, up to the unknown conditional evidence $p(\by{}_t |\by{}_{1:\tminusone})$.
	\item We obtain the density of the new posterior random variables $(\bTh, \bX_{t}|\bY_{1:t} = \by{}_{1:t})$ by solving a marginalization problem
	\begin{equation}\label{eq:marginalization}
		p(\bth, \bx_{t} | \by{}_{1:t}) = \int p(\bth, \bx_{t} , \bx_{\tminusone} | \by{}_{1:t}) \mathrm{d} \bx_{\tminusone}.
	\end{equation}
\end{enumerate} 

In these steps, the marginalization in \eqref{eq:marginalization} plays a key role in sequentially updating the joint posterior random variables $(\bTh, \bX_{t}|\bY_{1:t} = \by{}_{1:t})$. For example, the bootstrap filter \citep{gordon1993novel} solves the marginalization using a weighted update of conditional samples---it first draws conditional random variables  $\bX_{t}^{(i)} |\bX_{\tminusone}^{(i)}$ that follow the state transition density $ f(\bx_t | \bX_{\tminusone}^{(i)})$ conditioned on each of the previous particles, and then updates the weights using the likelihood function $g(\by{}_t |\bX_t^{(i)})$. 
The path estimation \eqref{eq:path_prob} and the smoothing \eqref{eq:smoothing_prob} can be solved by additional backward propagation steps given the solutions to the filtering and parameter estimation problems. 
In the rest of this section, we will introduce the TT decomposition that solves the marginalization problem \eqref{eq:marginalization} by function approximation, and then present a basic TT-based algorithm to outline the procedure for solving the filtering and parameter estimation problems. 
Algorithms for solving the smoothing problem will be discussed in later sections.

\subsection{TT decomposition}

The central computational tool used in this work is the functional TT decomposition \citep{bigoni2016spectral,gorodetsky2019continuous}. Consider we have a general multivariate function $h: \mathcal{X} \rightarrow \R$ where $\mathcal{X}\in \R^m$ is a Cartesian product. Then, one can approximately decompose $h(\bx)$ in the following form
\[
h(\bx) \approx \hat h(\bx) = \sum_{\alpha_0 = 1}^{r_0} \sum_{\alpha_1 = 1}^{r_1} \cdots \sum_{\alpha_m = 1}^{r_m} \mH_{1}^{(\alpha_0, \alpha_1)}(x_1) \cdots \mH_{k}^{(\alpha_{k\text{-}1}, \alpha_{k})}(x_k) \cdots \mH_{m}^{(\alpha_{m\text{-}1}, \alpha_m)}(x_m),
\]
where $r_0 = r_m = 1$ and the summation ranges $r_0, r_1, \ldots, r_m$ are called TT ranks. 
Each scalar-valued univariate function $\mH_{k}^{(\alpha_{k\text{-}1}, \alpha_{k})}(x_k)$ is represented as a linear combination of a set of $\ell_k$ basis functions $\{\phi_{k}^{(1)}(x_{k}), \ldots, \phi_{k}^{(\ell_k)}(x_{k})\}$, which yields
\[
	\mH_{k}^{(\alpha_{k\text{-}1},\alpha_{k})}(x_{k})=\sum_{j=1}^{\ell_{k}}\phi_{k}^{(j)}(x_{k}) {\mathbf A}_k[\alpha_{k\text{-}1},j,\alpha_{k}],
\]
where ${\mathbf A}_k \in \R^{r_{k\text{-}1} \times \ell_k \times r_k}$ is an order-$3$ coefficient tensor. Examples of the basis functions include piecewise polynomials, orthogonal functions, radial basis functions, etc. 
Grouping all scalar-valued univariate functions $\mH_{k}^{(\alpha_{k\text{-}1}, \alpha_{k})}(x_k)$ for each coordinate $x_k$ yields a matrix-valued function $\mH_{k}(x_k): \mathcal{X}_{k} \rightarrow \R^{r_{k\text{-}1} \times r_k}$, which is referred to as the $k$-th tensor core. This way, the decomposed function can also be expressed as a sequence of multiplications of matrix-valued univariate functions, which is given by
\begin{equation}\label{eq:tt}
	\hat h(\bx) = \mH_{1}(x_1) \cdots \mH_{k}(x_k) \cdots \mH_{m}(x_m).
\end{equation}

The TT decomposition can be efficiently computed via alternating linear schemes together with cross interpolation \citep{bigoni2016spectral,gorodetsky2019continuous,oseledets2010tt}.
We employ the functional extension of the alternating minimal energy method with a residual-based rank adaptation of \cite{dolgov2014alternating}. 
It requires only $\mathcal O(m \ell r^2)$ evaluations of the function $f$ and $\mathcal O(m \ell r^3)$ floating point operations, where $\ell = \max_k \ell_k$ and  $r = \max_k r_k$. 
In general, the maximal rank $r$ depends on the dimension $m$ and can be large when the function $h$ concentrates in some part of its domain. 
Some theoretical results exist that provide rank bounds. For example, the work of \cite{rohrbach2022rank} establishes specific bounds for certain multivariate Gaussian densities that depend poly-logarithmically on $m$, while the work of \cite{griebel2021analysis} proves dimension-independent bounds for general functions in weighted spaces with dominating mixed smoothness.

The integral of the factorized function $\hat h$ can be simplified to the integral of certain univariate tensor cores. 
For example, the integral over $x_{k}$ can be calculated as
\begin{equation}
	\label{tt_int}
	\begin{aligned}
	\int \hat h (x_{1},x_{2},\ldots,x_{m}) \dd x_{k} & = \int \mH_{1}(x_{1}) \cdots \mH_{k}(x_{k}) \cdots \mH_{m}(x_{m}) \dd x_{k}\\
	& = \quad\, \mH_{1}(x_{1}) \cdots \overline\mH_{k} \cdots \mH_{m}(x_{m}), 
	\end{aligned}
\end{equation}
where the overlined matrix $\overline\mH_{k} = \int \mH_{k}(x_{k}) \dd x_k \in \R^{r_{k\text{-}1}\times r_k}$ is obtained by integrating the $k$-th tensor core elementwisely. 
This way, the cost of the integration problem scales linearly in the number of variables and quadratically in tensor ranks. This opens the door to solving the marginalization step in sequential learning problems. 

\subsection{Basic algorithm}\label{sec:basic_alg}

We integrate the TT decomposition and the recursive formula in Section \ref{sec:rec} to design a basic TT-based algorithm for solving the sequential learning problems. Although this basic algorithm does not require a particular variable ordering, we order the variables as $(\bx_t, \bth, \bx_{\tminusone})$ to be compatible with algorithms that will be introduced in Section \ref{sec:stt_recursion}.

At time $t{-}1$, we suppose the density of the posterior variables $(\bTh, \bX_{\tminusone} | \bY_{1:\tminusone} = \by{}_{1:\tminusone})$ is approximated by a TT decomposition, i.e., 
\[
	p(\bth, \bx_{\tminusone} | \by{}_{1:\tminusone}) \approxprop \hat \pi(\bth, \bx_{\tminusone} | \by{}_{1:\tminusone}).
\]
Here $ h_1(\bx) \approxprop h_2(\bx)$ denotes that $h_1(\bx)$ is approximately proportional to $ h_2(\bx)$, i.e., they are close to each other after normalization:
\[
	\frac{h_1(\bx)}{\int h_1(\bx) \dd \bx} \approx	\frac{h_2(\bx)}{\int h_2(\bx) \dd \bx}.
\]
Then, for the new observed data $\by{}_t$, we can recursively approximate the new density of the joint posterior random variables $(\bTh, \bX_{t} | \bY_{1:t} = \by{}_{1:t})$, the filtering density of  $(\bX_{t} | \bY_{1:t} = \by{}_{1:t})$ and the posterior parameter density  $(\bTh | \bY_{1:t} = \by{}_{1:t})$ as follows.

\begin{myalg}{TT-based sequential estimation.}\label{alg:basic} 
\begin{enumerate}[nosep,leftmargin=18pt]
\item[(a)] {\bf Non-separable approximation.} Following \eqref{eq:rec_post}, the density of the joint posterior random variables $(\bX_t, \bTh, \bX_{\tminusone} | \bY_{1:t} = \by{}_{1:t})$ yields a non-separable, unnormalized approximation $q_t$ in the form of 
\begin{align} \label{eq:tt_0}
	q_t(\bx_t, \bth, \bx_{\tminusone}) := \hat \pi(\bx_{\tminusone}, \bth | \by{}_{1:\tminusone}) f(\bx_t |\bx_{\tminusone}, \bth) g(\by{}_t |\bx_t, \bth),
\end{align}
which can be evaluated pointwisely. 

\item[(b)] {\bf Separable approximation.} We re-approximate the non-separable unnormalized density $q_t$ by a TT decomposition $\hat \pi$, i.e.,
\begin{align*}
	q_t(\bx_t, \bth, \bx_{\tminusone}) & \approx \hat \pi (\bx_t, \bth, \bx_{\tminusone} | \by{}_{1:t}) \nonumber \\
	& = \mF_{1}(x_{t,1}) \cdots \mF_{m}(x_{t,m}) \mG_{1}(\theta_1) \cdots \mG_{d}(\theta_d) \mH_{1}(x_{\tminusone,1}) \cdots \mH_{m}(x_{\tminusone,m}),
\end{align*}
where $\mF$, $\mG$ and $\mH$ denote tensor cores for the state at time $t$, the parameters, and the state at time  $t{-}1$, respectively. 

\item[(c)] {\bf Integration.} By integrating the TT-approximation $\hat \pi$, we are able to approximate the density of the posterior random variables $(\bTh, \bX_t | \bY_{1:t} = \by{}_{1:t})$ by 
\begin{align*}
	\hat \pi(\bx_t , \bth| \by{}_{1:t}) & = \int \hat \pi(\bx_t, \bth, \bx_{\tminusone} | \by{}_{1:t}) \dd \bx_{\tminusone} \nonumber \\
	& = \quad\, \mF_{1}(x_{t,1}) \cdots \mF_{m}(x_{t,m}) \mG_{1}(\theta_1) \cdots \mG_{d}(\theta_d) ( \overline \mH_1 \cdots \overline \mH_m ),
\end{align*}
and the normalizing constant 
\[
	c_t = \int 	\hat \pi(\bx_t, \bth, \bx_{\tminusone} | \by{}_{1:t})  \dd \bx_t\dd \bth \dd \bx_{\tminusone} = \overline\mF_{1} \cdots \overline\mF_{m}\overline\mG_{1} \cdots \overline\mG_{d} \overline \mH_1 \cdots \overline \mH_m.
\]
The densities of the posterior parameters $(\bTh|\bY_{1:t} = \by{}_{1:t})$ and the filtering state $(\bX_t | \bY_{1:t} = \by{}_{1:t})$ can be approximated in a similar way.\vspace{4pt}
\end{enumerate}
In the next step $t+1$, we can apply the same procedure using the newly computed approximation $\hat \pi(\bx_t, \bth | \by{}_{1:t})$. 
\end{myalg}

The above basic TT-based algorithm reveals some key principles of the algorithms introduced in this paper. Given the existing approximation $\hat \pi(\bx_{\tminusone}, \bth | \by{}_{1:\tminusone})$, the joint posterior random variables $(\bX_t, \bTh, \bX_{t{-}1} | \bY_{1:t} = \by{}_{1:t})$ yield a non-separable, unnormalized approximate density $q_t$ in step (a) of Alg.~\ref{alg:basic}. Since $q_t$ cannot be directly marginalized, we re-approximate $q_t$ using a new TT in step (b) of Alg.~\ref{alg:basic}, which enables further marginalizations in step (c) of Alg.~\ref{alg:basic}. 

In Section \ref{sec:estimation}, we design new algorithms to remove estimation biases due to approximation errors.
In Alg.~\ref{alg:basic}, the rank truncation used by TT may not preserve the non-negativity of density functions. An analogue is that the truncated singular value decomposition of a non-negative matrix may contain negative entries. The non-negativity is essential for designing transport maps for debiasing. We will overcome this barrier by using an alternative form of the TT decomposition in Section~\ref{sec:estimation}.
Then in Section~\ref{sec:error}, we analyze the accumulation of approximation errors over time. 
Since the complexity and the approximation power of TT-based algorithms crucially rely on the tensor ranks, we develop preconditioning methods in Section~\ref{sec:precondition} to improve the efficiency of our proposed algorithms. 


\section{Squared-TT algorithms, debiasing and smoothing}\label{sec:estimation}

We first review the squared-TT method for building KR rearrangements, which is originally introduced in \cite{cui2021deep}. 
We then integrate the resulting transport maps into the recursive procedure defined in Section \ref{sec:basic_alg} to sequentially solve the filtering problem \eqref{eq:filtering_prob} and the parameter estimation problem \eqref{eq:param_prob} with sample-based debiasing. 
Furthermore, we also design an algorithm to solve the path estimation problem \eqref{eq:path_prob} and the smoothing problem \eqref{eq:smoothing_prob} with additional backward propagation steps. 

\subsection{Squared TT and KR rearrangement}\label{sec:stt}
Consider the normalized target probability density $p(\bx) = \frac1z \pi(\bx)$ with $\bx \in \R^m$, in which $z$ is an unknown constant, and we can only evaluate the unnormalized density $\pi(\bx)$. To preserve the non-negativity in function approximation, one can decompose the square root of $\pi(\bx)$, i.e., 
\[
\surd\pi(\bx) \approx \phi(\bx) = \mH_{1}(x_{1}) \cdots \mH_{m}(x_{m}).
\]
Then, the approximate density function 
\(
\phi(\bx)^2
\)
is non-negative for all $\bx$ by construction. 
Given some reference tensor-product probability density $\lambda(\bx) := \prod_{i=1}^{m} \lambda_i(x_i)$ such that $\sup_{\bx} \pi(\bx) / \lambda(\bx) < \infty$ and a sufficiently small constant $\tau > 0$, we can further construct a defensive version of the approximate density function
\begin{equation} \label{eq:sirt_approx}
	\hat p(\bx) = \frac{1}{\hat z} \hat \pi(\bx), \quad \hat \pi(\bx) = \phi(\bx)^2 + \tau \lambda(\bx), \quad \hat z = \int \hat \pi(\bx) \dd \bx.
\end{equation}
The approximate density $\hat p$ with the defensive term $ \lambda(\bx) $ satisfies 
\[
	\sup_{\bx} \frac{p(\bx)}{\hat p(\bx)} = \sup_{\bx} \frac{\hat z p(\bx)}{\phi(\bx)^2 + \tau \lambda(\bx)} < \frac{\hat z}{\tau z} \sup_{\bx} \frac{\pi(\bx)}{\lambda(\bx)} < \infty.
\]

We introduce the defensive term $ \tau \lambda(\bx) $ to ensure that the target density $ p(\bx) $ is absolutely continuous with respect to the approximate density $ \hat p(\bx) $. This way, when the approximation $ \hat p(\bx) $ is used as the proposal density in importance sampling, e.g., in the case of correcting the bias of our recursive posterior approximation scheme, the resulting estimators can fulfil the requirement of the central limit theorem. Under mild assumptions, the following lemma establishes the $L^2$ error of $\surd \hat \pi$ and the Hellinger distance between the normalized density $ p$ and its approximation $ \hat p $.

\begin{lemma}\label{lemma:sirt_error}
Suppose the TT approximation $\phi$ satisfies	
\(
\lerr{\phi - \surd \pi} \leq \epsilon 
\) 
and the constant $ \tau $ satisfies 
\(
\tau \leq \lerr{\phi - \surd \pi}^2.
\)
Then, the $ L^2 $ error of $\surd \hat \pi$ defined in \eqref{eq:sirt_approx} satisfies 
\(
\lerr{\surd \hat \pi - \surd \pi} \leq \surd 2 \epsilon.
\) 
The Hellinger distance between $ p $ and its normalized approximation $\hat p$ defined in \eqref{eq:sirt_approx} satisfies 
\[
	\dhd{\hat p}{p} \leq \frac{\surd 2} {\surd{z}} \lerr{\surd \hat \pi - \surd \pi} \leq \frac{2\epsilon} {\surd{z}}.
\]
\end{lemma}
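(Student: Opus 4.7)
The plan is to establish the two inequalities separately. The first is essentially an algebraic pointwise estimate tailored to the form $\hat\pi = \phi^2 + \tau\lambda$; the second is a triangle-inequality chain in $L^2$ between $\sqrt{\hat p}$ and $\sqrt{p}$ routed through the intermediate $\sqrt{\hat\pi}/\sqrt{z}$. Neither step uses any property of the tensor-train representation beyond the hypothesis $\lerr{\phi - \sqrt{\pi}} \leq \epsilon$, so $\phi$ can be treated as an arbitrary (possibly sign-changing) square-integrable function.

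For the $L^2$ bound, I would first verify the pointwise inequality
\[
\bigl(\sqrt{\phi^2 + \tau\lambda} - \sqrt{\pi}\bigr)^2 \;\leq\; \bigl(|\phi| - \sqrt{\pi}\bigr)^2 + \tau\lambda,
\]
obtained by expanding both squares: their difference equals $\tau\lambda + 2\sqrt{\pi}\bigl(|\phi| - \sqrt{\phi^2+\tau\lambda}\bigr)$, which is non-positive because $|\phi| \leq \sqrt{\phi^2+\tau\lambda}$ and $\sqrt{\pi} \geq 0$. Integrating and using $\int \lambda\,\dd\bx = 1$ gives $\lerr{\sqrt{\hat\pi} - \sqrt{\pi}}^2 \leq \lerr{|\phi| - \sqrt{\pi}}^2 + \tau$. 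Pointwise $(|\phi|-\sqrt{\pi})^2 \leq (\phi-\sqrt{\pi})^2$ since $\sqrt{\pi} \geq 0$, hence $\lerr{|\phi|-\sqrt{\pi}} \leq \lerr{\phi-\sqrt{\pi}} \leq \epsilon$; combined with $\tau \leq \epsilon^2$ this yields the claim $\lerr{\sqrt{\hat\pi} - \sqrt{\pi}}^2 \leq 2\epsilon^2$.

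For the Hellinger bound, I would use the identity $\dhd{\hat p}{p} = \lerr{\sqrt{\hat p} - \sqrt{p}}/\sqrt{2}$ and split via the intermediate $\sqrt{\hat\pi}/\sqrt{z}$:
\[
\lerr{\sqrt{\hat p} - \sqrt{p}} \;\leq\; \lerr{\sqrt{\hat p} - \sqrt{\hat\pi}/\sqrt{z}} + \lerr{\sqrt{\hat\pi}/\sqrt{z} - \sqrt{p}}.
\]
The second summand equals $\lerr{\sqrt{\hat\pi} - \sqrt{\pi}}/\sqrt{z}$. For the first, factoring out $\sqrt{\hat\pi}$ produces $\bigl|1/\sqrt{\hat z} - 1/\sqrt{z}\bigr|\,\lerr{\sqrt{\hat\pi}}$, and the norm identity $\lerr{\sqrt{\hat\pi}} = \sqrt{\hat z}$ collapses it to $|\sqrt{z} - \sqrt{\hat z}|/\sqrt{z}$. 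The key step is then to observe that $\sqrt{\hat z} = \lerr{\sqrt{\hat\pi}}$ and $\sqrt{z} = \lerr{\sqrt{\pi}}$, so the reverse triangle inequality for the $L^2$ norm gives $|\sqrt{\hat z} - \sqrt{z}| \leq \lerr{\sqrt{\hat\pi} - \sqrt{\pi}}$. Summing and dividing by $\sqrt{2}$ delivers $\dhd{\hat p}{p} \leq (\sqrt{2}/\sqrt{z})\,\lerr{\sqrt{\hat\pi} - \sqrt{\pi}}$, and chaining with the first part yields the $2\epsilon/\sqrt{z}$ estimate.

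The only genuine obstacle is the initial pointwise inequality in the $L^2$ step, which is where the defensive regularizer $\tau\lambda$ pays its way: it contributes a clean extra $+\tau$ to the squared error that is absorbed by $\tau \leq \epsilon^2$, while passing through $|\phi|$ neutralizes the possible sign change of $\phi$ at no cost (because $\sqrt{\pi}$ is non-negative). Everything else is routine triangle-inequality work, with the pleasant observation that the normalizing constants enter the Hellinger estimate only through the norm identity $\sqrt{\hat z} = \lerr{\sqrt{\hat\pi}}$.
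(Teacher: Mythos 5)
The paper's ``proof'' of this lemma is a one-line citation to Proposition 4 and Theorem 1 of Cui and Dolgov (2022), so there is no in-text derivation to compare against; you have supplied a complete, self-contained argument, and it is correct. Both halves work: the pointwise inequality $\bigl(\sqrt{\phi^2+\tau\lambda}-\sqrt{\pi}\bigr)^2 \leq (|\phi|-\sqrt{\pi})^2 + \tau\lambda$ follows by expansion, and after integrating (using $\int\lambda=1$), passing to $|\phi|$, and invoking $\tau\leq\epsilon^2$ you land on $\lerr{\surd\hat\pi-\surd\pi}^2 \leq 2\epsilon^2$. The Hellinger bound then comes from routing the triangle inequality through $\surd\hat\pi/\surd z$, and the identities $\lerr{\surd\hat\pi}=\surd\hat z$, $\lerr{\surd\pi}=\surd z$ together with the reverse triangle inequality control the normalizing constant term; this is exactly the standard device for upgrading an unnormalized $L^2$ estimate to a Hellinger estimate and reproduces the stated constants.

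The one blemish is cosmetic: you write that ``their difference equals $\tau\lambda + 2\sqrt{\pi}\bigl(|\phi| - \sqrt{\phi^2+\tau\lambda}\bigr)$, which is non-positive.'' Read literally, that claims the whole expression is non-positive, which is false (at a point where $\pi=0$ it equals $\tau\lambda\geq 0$). What is non-positive is only the second summand $2\sqrt{\pi}\bigl(|\phi|-\sqrt{\phi^2+\tau\lambda}\bigr)$, and that is all you need: the target inequality is $(a-c)^2 - (b-c)^2 \leq \tau\lambda$, not $\leq 0$. Rephrasing that sentence to say the second term is non-positive (so the difference is at most $\tau\lambda$) would make the argument airtight.
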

\begin{proof}
The bounds on the $L^2$ error and the Hellinger distance are shown in  Proposition 4 and Theorem 1 of~\cite{cui2021deep}, respectively.
\end{proof}

As outlined in Section \ref{sec:basic_alg}, marginalization of the approximation is a key operation in deriving the recursive algorithm. For the squared TT approximation defined in \eqref{eq:sirt_approx}, the following \cite[Proposition 2]{cui2021deep} gives its marginal density.

\begin{proposition}\label{prop:marginal}
We consider the normalized approximation  $\hat p$ defined in \eqref{eq:sirt_approx}. For the TT decomposition $\phi(\bx) = \mH_{1}(x_{1}) \cdots \mH_{m}(x_{m})$ and a given index $1 \leq k < m$, we define the left accumulated tensor core as 
\(
	\mH_{\leq k} (\bx_{\leq k}) = \mH_{1}(x_1) \cdots \mH_{k}(x_k) 	\in \R^{1 \times r_k}
\)
and the right accumulated tensor core as
\(
	\mH_{> k} (\bx_{> k}) = \mH_{k+1}(x_{k+1}) \cdots \mH_{m}(x_m) 	\in \R^{r_k\times 1}.
\)
Then the marginal density of $\hat p$ takes the form
\begin{equation} \label{eq:sirt_marginal}
	\hat p(\bx_{\leq k}) = \int \hat p(\bx) \dd \bx_{> k } = \frac1{\hat z}\Big( \sum_{\gamma_{k} = 1}^{r_{k}}  \Big(\sum_{\alpha_{k} = 1}^{r_{k}} \mH_{\leq k}^{(\alpha_{k})} (\bx_{\leq k}) \mL_{> k}^{(\alpha_k, \gamma_{k})} \Big)^2 + \tau \lambda(\bx_{\leq k}) \Big),
\end{equation}
where $\lambda(\bx_{\leq k}) = \prod_{i = 1}^k \lambda_i(x_i)$ and $\mL_{> k} \in \R^{r_k \times r_k}$ is the (lower-triangular) Cholesky decomposition of the accumulated mass matrix
\[
\mM_{> k}^{(\alpha_k, \beta_{k})} = \int \mH_{> k}^{(\alpha_{k})} (\bx_{> k}) \mH_{> k}^{(\beta_{k})} (\bx_{> k}) \dd \bx_{> k }, \quad \alpha_k = 1, \ldots, r_k,  \quad \beta_{k} = 1, \ldots, r_k.
\]
\end{proposition}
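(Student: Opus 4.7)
The plan is to exploit the multi-linear structure of the TT decomposition at the split index $k$, then integrate the squared part and the defensive part separately, and finally convert the remaining Gram-matrix bilinear form into the claimed sum of squares via a Cholesky factorization.

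First, I would split the TT representation at index $k$ and write $\phi(\bx) = \sum_{\alpha_k = 1}^{r_k} \mH_{\leq k}^{(\alpha_k)}(\bx_{\leq k})\, \mH_{>k}^{(\alpha_k)}(\bx_{>k})$, where the scalar factorization follows because $\mH_{\leq k}$ is a $1 \times r_k$ row and $\mH_{>k}$ is an $r_k \times 1$ column. Squaring gives the double sum
\[
\phi(\bx)^2 = \sum_{\alpha_k=1}^{r_k} \sum_{\beta_k=1}^{r_k} \mH_{\leq k}^{(\alpha_k)}(\bx_{\leq k})\, \mH_{\leq k}^{(\beta_k)}(\bx_{\leq k}) \, \mH_{>k}^{(\alpha_k)}(\bx_{>k}) \, \mH_{>k}^{(\beta_k)}(\bx_{>k}),
\]
and integrating over $\bx_{>k}$ moves the $\bx_{>k}$-dependent product under the integral, producing exactly the entries of the accumulated mass matrix $\mM_{>k}^{(\alpha_k, \beta_k)}$.

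Next I would handle the defensive term. Because $\lambda$ factorizes as $\prod_{i=1}^m \lambda_i(x_i)$ with each $\lambda_i$ a one-dimensional probability density, $\int \lambda(\bx) \, \dd \bx_{>k} = \prod_{i=1}^k \lambda_i(x_i) = \lambda(\bx_{\leq k})$, yielding the $\tau \lambda(\bx_{\leq k})$ term in \eqref{eq:sirt_marginal}. Combining both pieces and dividing by $\hat z$ gives
\[
\hat p(\bx_{\leq k}) = \frac{1}{\hat z}\!\left( \sum_{\alpha_k, \beta_k} \mH_{\leq k}^{(\alpha_k)}(\bx_{\leq k})\, \mM_{>k}^{(\alpha_k, \beta_k)} \, \mH_{\leq k}^{(\beta_k)}(\bx_{\leq k}) + \tau \lambda(\bx_{\leq k}) \right).
\]

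The last step is to convert the bilinear form in $\mM_{>k}$ into the stated sum-of-squares expression. Observe that $\mM_{>k}$ is a Gram matrix of the functions $\mH_{>k}^{(\alpha_k)}$ in $L^2(\bx_{>k})$, hence symmetric positive semi-definite, so a lower-triangular Cholesky factor $\mL_{>k}$ with $\mM_{>k}^{(\alpha_k,\beta_k)} = \sum_{\gamma_k} \mL_{>k}^{(\alpha_k, \gamma_k)} \mL_{>k}^{(\beta_k, \gamma_k)}$ exists. Substituting this factorization lets the $\alpha_k$ and $\beta_k$ sums decouple, so that
\[
\sum_{\alpha_k, \beta_k} \mH_{\leq k}^{(\alpha_k)} \mL_{>k}^{(\alpha_k,\gamma_k)} \mL_{>k}^{(\beta_k,\gamma_k)} \mH_{\leq k}^{(\beta_k)} = \Big( \sum_{\alpha_k} \mH_{\leq k}^{(\alpha_k)}(\bx_{\leq k}) \mL_{>k}^{(\alpha_k,\gamma_k)} \Big)^{\!2},
\]
and summing over $\gamma_k$ produces \eqref{eq:sirt_marginal} as claimed.

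The only subtle step is justifying existence of the Cholesky factor when $\mM_{>k}$ is only positive semi-definite rather than strictly positive definite; this can be addressed either by invoking the general (possibly non-unique) Cholesky decomposition for positive semi-definite matrices, or by a standard $\epsilon I + \mM_{>k}$ limiting argument, but it does not affect the final identity. All other steps are mechanical consequences of linearity of integration and the separable structure of the TT and defensive densities.
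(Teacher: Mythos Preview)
Your proof is correct and complete. The paper does not actually give its own proof of this proposition; it simply cites it as \cite[Proposition 2]{cui2021deep}, so your direct argument via splitting the TT at index $k$, integrating to obtain the Gram matrix $\mM_{>k}$, and Cholesky-factorizing to recover the sum-of-squares form is exactly the intended derivation and fills in what the paper omits.
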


As shown in \cite{cui2021deep}, the Cholesky decomposition in Proposition \ref{prop:marginal} can be recursively computed from $k = m-1$ to $k=1$. 
The normalizing constant $\hat z$ can also be computed in a similar way as the last iteration of the recursion. 
Computing all the marginal densities $\hat p(x_1), \ldots, \hat p(\bx_{\leq m\text{-}1})$ requires $\mathcal{O}(m \ell r^3) $ floating point operations (flops).

The normalized approximate density $\hat p$ defines a new random variable $\widehat{\bX}$. 
Then using the marginal densities constructed in Proposition \ref{prop:marginal}, the densities of random variables $\widehat{X}_1, \widehat{X}_2|\widehat{X}_1$, $\ldots, \widehat{X}_k |\widehat{\bX}_{<k}, \ldots,\widehat{X}_m |\widehat{\bX}_{<m}$ are given by $\hat p(x_1)$ and
\[
\hat p(x_k |\bx_{< k}) = 	\frac{\hat p(\bx_{\leq k})}{\hat p(\bx_{<k})} = \frac{\phi(\bx_{\leq k})^2 + \tau \lambda(\bx_{\leq k})}{\phi(\bx_{< k})^2 + \tau \lambda(\bx_{< k})}, \quad k = 2, 3, \ldots, m,
\]
respectively. The corresponding distribution functions
\begin{equation}\label{eq:marginal_cdf}
	F_1(x_1 ) = \int_{-\infty}^{x_1} \hat p(x_1^\prime) \dd x_1^\prime \quad \text{and} \quad F_k(x_k |\bx_{<k}) = \int_{-\infty}^{x_k} 	\hat p(x_k^\prime |\bx_{<k})  \dd x_k^\prime
\end{equation}
define an order-preserving map $\mathcal{F}: \R^m \mapsto [0,1]^m$ in the form of 
\begin{equation}\label{eq:KR}
	 \mathcal{F}(\bx) = \big[F_1(x_1), \ldots, F_k(x_k |\bx_{<k}), \ldots, F_m(x_m |\bx_{<m})\big]^\top ,
\end{equation} 
which is referred to as the KR rearrangement \citep{rosenblatt1952remarks,knothe1957contributions}. The map $\mathcal{F}$ transforms a random variable $\widehat{\bX} \sim \hat p(\bx)$ into a uniform random variable $\bXi \sim \mathrm{uniform}(\bxi)$, i.e., $\bXi = \mathcal{F}(\widehat{\bX})$.
Since the $k$-th component of $\mathcal{F} $ depends on only the previous $k-1$ coordinates, both $\mathcal{F}$ and its inverse $\mathcal{F}^{-1}$ have triangular structures, and thus can be evaluated dimension-by-dimension. 

The construction of the marginal densities in \eqref{eq:sirt_marginal}, and hence the conditional densities $\{\hat p(x_k |\bx_{<k})\}_{k = 1}^m$, requires $\mathcal{O}(m \ell r^3) $ flops. These conditional densities can be constructed before evaluating the transformations $\mathcal{F}$ and $\mathcal{F}^{-1}$. 
Evaluating the KR rearrangement $\mathcal{F}$ consists of the dimension-by-dimension evaluation of the conditional densities $\hat p(x_k |\bx_{<k}) $ and the computation of the corresponding distribution functions $\{F_k\}_{k=1}^m$. For each sample, the former is essentially a sequence of vector-matrix products with a total cost of $ \mathcal O(m\ell r^2) $ flops, while the latter can be computed algebraically exactly with a total cost of $ \mathcal O(m \ell (\log\ell + r) + m\ell) $ flops, where the $ \mathcal O(m\ell(\log\ell + r)) $ term is for applying pseudo-spectral methods to construct the distribution function and the $ \mathcal O(m\ell) $ term is for evaluating the resulting distribution function. This way, evaluating $\mathcal{F}$ for $N$ samples costs $ \mathcal O(m \ell r^3 + Nm\ell r^2 + N m \ell (\log\ell+r) + N m \ell) $. For evaluating the inverse transformation $\mathcal{F}^{-1}$, we need to numerically solve a sequence of root finding problems for inverting each distribution function $F_k$. Since $F_k$ is strictly increasing and bounded by construction, applying either the regula falsi method or Newton's method for a fixed number of iterations (denoted by $c$) can accurately invert $F_k$ up to machine precision, where $c$ is often less than 10. This way, evaluating the inverse transform $\mathcal{F}^{-1}$ for $N$ samples costs $ \mathcal O(m \ell r^3 + Nm\ell r^2 + N m \ell (\log\ell+r) + N m \ell c)$ flops. Therefore, for cases where the sample size $N$ is large and the TT rank $r$ is sufficiently high, the evaluations of $\mathcal{F}$ and $\mathcal{F}^{-1}$ cost $\mathcal O( N m\ell r^2)$ flops.

\begin{remark}\label{rmk:uptri_map}
The marginal densities $\hat p(\bx_{\geq k}) $ where $ k = 2, \ldots,m$, can be computed in a similar way from the first coordinate to the last coordinate. 
This way, the resulting KR rearrangement is upper-triangular, i.e.,
\begin{equation}
	\mathcal{F}^{u}(\bx) = \big[F_1(x_1 | \bx_{>1}), \ldots, F_k(x_k |\bx_{>k}), \ldots, F_m(x_m)\big]^\top , 
\end{equation}
as the variable dependency follows a reverse order. 
\end{remark}

\subsection{Non-negativity-preserving algorithm and conditional maps}\label{sec:stt_recursion}

Following the steps of the basic algorithm (Alg.~\ref{alg:basic}), we first define a new sequential learning algorithm using the non-negativity-preserving approximation presented in \eqref{eq:sirt_approx}, and then construct the associated conditional KR rearrangements. 
The conditional KR rearrangements will be used to sequentially generate weighted samples to correct for the approximation bias.  

\begin{myalg}{Sequential estimation using squared-TT approximations.}\label{alg:stt}
\begin{enumerate}[leftmargin=18pt]
\item[(a)] {\bf Non-separable approximation.} 
At time $t{-}1$, suppose the posterior random variables $(\bTh, \bX_{\tminusone} | \bY_{1:\tminusone})$ has a TT-based (unnormalized) approximate density 
\(
	\hat \pi(\bth, \bx_{\tminusone} | \by{}_{1:\tminusone}).
\)
The density of the new joint posterior random variables $(\bX_t,\bTh, \bX_{\tminusone} |\bY_{1:t} = \by{}_{1:t})$ yields a non-separable, unnormalized approximation
\begin{align} 
	q_t(\bx_t, \bth, \bx_{\tminusone}) := \hat \pi( \bx_{\tminusone}, \bth | \by{}_{1:\tminusone}) f(\bx_t |\bx_{\tminusone}, \bth) g(\by{}_t |\bx_t, \bth).  \label{eq:sqrt_0}
\end{align}

\item[(b)] {\bf Separable approximation.} We re-approximate the square root of $q_t$ by a TT decomposition $\phi_t$, i.e.,
\begin{align}
\sqrt{ q_t(\bx_t, \bth, \bx_{\tminusone})}  & \approx \phi_t (\bx_t, \bth, \bx_{\tminusone} ) \nonumber \\
& =  \mG_{1}(x_{t,1}) \!\cdots\! \mG_{m}(x_{t,m}) \mF_{1}(\theta_1) \cdots \mF_{d}(\theta_d) \mH_{1}(x_{\tminusone,1}) \cdots \mH_{m}(x_{\tminusone,m}). \label{eq:sqrt_1}
\end{align}
and choose a constant $\tau_t$ such that $\tau_t \leq \lerr{\phi_t - \surd q_t}^2$. Following the construction in \eqref{eq:sirt_approx}, this gives a non-negative approximation 
\begin{equation}
	\hat \pi(\bx_t, \bth, \bx_{\tminusone} | \by{}_{1:t}) = \phi_t ( \bx_t, \bth, \bx_{\tminusone} )^2 + \tau_t \lambda(\bx_t) \lambda(\bth) \lambda(\bx_{\tminusone}).\label{eq:sqrt_2} 
\end{equation}

\item[(c)] {\bf Integration.} Applying Proposition \ref{prop:marginal} from the right variable $x_{\tminusone,m}$ to the left variable $x_{\tminusone,1}$, we obtain 
\(
	\hat \pi(\bx_t, \bth | \by{}_{1:t}) =  \int \hat \pi(\bx_t, \bth, \bx_{\tminusone} | \by{}_{1:t}) \, \dd \bx_{\tminusone}
\).
\vspace{4pt}

\end{enumerate}
In the next step $t+1$, we can apply the same procedure using the newly computed marginal approximation $\hat \pi(\bx_t, \bth | \by{}_{1:t})$. 
\end{myalg}

Using Proposition \ref{prop:marginal}, we can integrate the approximate density $\hat \pi(\bx_t, \bth, \bx_{\tminusone}|\by{}_{1:t})$ from the right variable $x_{\tminusone,m}$ to the left variable $x_{t,1}$ to define the lower-triangular KR rearrangement $\mathcal{F}_t^{l} : \mathcal{X}\times \Uptheta \times \mathcal{X} \rightarrow [0,1]^{2m+d}$ in the form of
\begin{equation}\label{eq:lower_KR_full}	
\mathcal{F}^l_t(\bx_t, \bth, \bx_{\tminusone}) =  
\begin{bmatrix*}[l]  \mathcal{F}^l_{t,t}\hspace{6pt}(\bx_{t}) \vspace{4pt}\\ \mathcal{F}^l_{t,\theta}\hspace{5pt}(\bth & \hspace{-11pt} | \bx_{t})  \vspace{4pt}\\ \mathcal{F}^l_{t,\tminusone}(\bx_{\tminusone} & \hspace{-11pt} | \bx_{t},\bth)  \end{bmatrix*} 
= \begin{bmatrix*}[l] 
F_{t,t,1} & \hspace{-9pt} (x_{t,1}) &  \\ & \svdots & \\ 
F_{t,t,m} & \hspace{-9pt} (x_{t,m} & \hspace{-10pt} | \bx_{t,<m}) \\ & \svdots & \\ 
F_{t,\theta,k} & \hspace{-9pt} (\theta_k & \hspace{-10pt} | \bx_{t},\bth_{<k}) \\ & \svdots & \\ 
F_{t,\tminusone,1} & \hspace{-9pt} (x_{\tminusone,1} & \hspace{-10pt} | \bx_{t},\bth) \\ & \svdots & \\ 
F_{t,\tminusone,m} & \hspace{-9pt} (x_{\tminusone,m} & \hspace{-10pt} | \bx_{t},\bth,\bx_{\tminusone,<m}) \end{bmatrix*},
\end{equation}
in which each scalar-valued function $F_{t, \cdot, \cdot}$ is a (conditional) distribution function. For example, we have
\begin{align}\label{eq:conditional_dist}
	F_{t,\tminusone,k} (x_{\tminusone,k} | \bx_t, \bth, \bx_{\tminusone,<k}) = \int_{-\infty}^{x_{\tminusone,k}} \frac{\hat \pi(\bx_t, \bth, \bx_{\tminusone,< k}, x_{\tminusone,k}^\prime |\by{}_{1:t})}{\hat \pi(\bx_t, \bth, \bx_{\tminusone, < k}|\by{}_{1:t})} \dd x_{\tminusone,k}^\prime.
\end{align}
The inverse map $(\mathcal{F}^l_t)^{-1}$ transforms uniform random variables to the approximate posterior random variables $(\widehat{\bX}_t, \widehat{\bTh}, \widehat{\bX}_{\tminusone} | \bY_{1:t} = \by{}_{1:t}) \sim \hat p$, where $\hat p(\bx_t, \bth, \bx_{\tminusone}|\by{}_{1:t}) \propto \hat \pi(\bx_t, \bth, \bx_{\tminusone}|\by{}_{1:t})$. 

Note that the construction of conditional distribution function, e.g., \eqref{eq:conditional_dist}, does not require the normalizing constant of the approximate density $\hat \pi$, 
because the marginal densities share the same normalizing constant. 
Thus, by only integrating the state $\bx_{\tminusone}$  dimension-by-dimension from the right variable $x_{\tminusone,m}$ to the left variable $x_{\tminusone,1}$, we obtain the lower conditional KR rearrangement
\begin{equation}\label{eq:lower_KR_conditional}	
	\mathcal{F}^l_{t,\tminusone}(\bx_{\tminusone} | \bx_{t},\bth)
	= \begin{bmatrix*}[l] F_{t,\tminusone,1} & \hspace{-9pt} (x_{\tminusone,1} & \hspace{-10pt} | \bx_{t},\bth) \\ & \svdots & \\ F_{t,\tminusone,m} & \hspace{-9pt} (x_{\tminusone,m} & \hspace{-10pt} | \bx_{t},\bth,\bx_{\tminusone,<m}) \end{bmatrix*}.
\end{equation}
The following proposition shows that the lower conditional map $\mathcal{F}^l_{t,\tminusone}$ defines a \emph{backward} sampler, which generates samples backward in time from the conditional density $\hat p(\bx_{\tminusone} | \bx_{t}, \bth, \by{}_{1:t})$. This is particularly useful for defining  smoothing algorithms. 

\begin{proposition}\label{prop:backward}
Conditioned on $\widehat{\bX}_{t} = \bx_t $ and $ \widehat{\bTh} = \bth$, inverting the lower conditional KR rearrangement $\mathcal{F}^l_{t,\tminusone}$ defined in \eqref{eq:lower_KR_conditional}, we transform a uniform random variable to the conditional random variable 
\[
	(\widehat{\bX}_{\tminusone} | \widehat{\bX}_{t} = \bx_t,\widehat{\bTh} = \bth,  \bY{}_{1:t} = \by{}_{1:t}) \sim \hat p(\bx_{\tminusone} | \bx_{t}, \bth, \by{}_{1:t}).
\]
\end{proposition}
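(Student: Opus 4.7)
The plan is to reduce the statement to the standard inverse probability integral transform (IPIT) applied componentwise, after verifying that each scalar factor of $\mathcal{F}^l_{t,\tminusone}$ is a conditional CDF of the approximate density. First I would fix $(\bx_t, \bth)$ with $\hat \pi(\bx_t, \bth | \by{}_{1:t}) > 0$---which holds on a full-measure set thanks to the defensive term $\tau_t \lambda(\bx_t)\lambda(\bth)\lambda(\bx_{\tminusone})$ in \eqref{eq:sqrt_2}---and define the conditional target
\[
\hat p(\bx_{\tminusone} | \bx_t, \bth, \by{}_{1:t}) = \frac{\hat \pi(\bx_t, \bth, \bx_{\tminusone} | \by{}_{1:t})}{\hat \pi(\bx_t, \bth | \by{}_{1:t})}.
\]
By construction \eqref{eq:sqrt_2} this ratio is nonnegative, and its normalization as a density in $\bx_{\tminusone}$ follows from step (c) of Alg.~\ref{alg:stt}, which produces precisely the denominator as the $\bx_{\tminusone}$-marginal of the numerator.

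Next I would identify each component $F_{t,\tminusone,k}$ in \eqref{eq:lower_KR_conditional} with the conditional CDF of $x_{\tminusone,k}$ given $(\bx_t, \bth, \bx_{\tminusone,<k})$ under the conditional density $\hat p(\cdot | \bx_t, \bth, \by{}_{1:t})$. Dividing both numerator and denominator of the integrand in \eqref{eq:conditional_dist} by $\hat \pi(\bx_t, \bth | \by{}_{1:t})$ rewrites it as
\[
\frac{\hat p(\bx_{\tminusone, <k}, x_{\tminusone,k}^\prime | \bx_t, \bth, \by{}_{1:t})}{\hat p(\bx_{\tminusone, <k} | \bx_t, \bth, \by{}_{1:t})} = \hat p(x_{\tminusone,k}^\prime | \bx_t, \bth, \bx_{\tminusone, <k}, \by{}_{1:t}),
\]
so that integrating from $-\infty$ to $x_{\tminusone,k}$ returns the conditional CDF as claimed. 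Proposition \ref{prop:marginal}, applied recursively from $x_{\tminusone,m}$ down to $x_{\tminusone,1}$, produces all the suffix marginals $\hat \pi(\bx_t, \bth, \bx_{\tminusone, \leq k} | \by{}_{1:t})$ needed to form these ratios, so each $F_{t,\tminusone,k}$ is well defined and, by the strict positivity of the defensive approximation, strictly increasing in its first argument, hence invertible.

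Finally I would apply the IPIT dimension by dimension. Given $\bxi \sim \mathrm{uniform}([0,1]^m)$ and the frozen $(\bx_t, \bth)$, set $\widehat{x}_{\tminusone,1} = F_{t,\tminusone,1}^{-1}(\xi_1 | \bx_t, \bth)$ and inductively $\widehat{x}_{\tminusone,k} = F_{t,\tminusone,k}^{-1}(\xi_k | \bx_t, \bth, \widehat{\bx}_{\tminusone, <k})$. By the IPIT each step yields a draw with the correct one-dimensional conditional density, and the chain rule
\[
\hat p(\bx_{\tminusone} | \bx_t, \bth, \by{}_{1:t}) = \prod_{k=1}^m \hat p(x_{\tminusone,k} | \bx_t, \bth, \bx_{\tminusone, <k}, \by{}_{1:t})
\]
assembles these draws into a joint sample from $\hat p(\bx_{\tminusone} | \bx_t, \bth, \by{}_{1:t})$, which is exactly $(\mathcal{F}^l_{t,\tminusone})^{-1}(\bxi | \bx_t, \bth)$. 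The main obstacle is not analytic---everything follows from the IPIT and the chain rule---but organizational: one must confirm that the suffix marginals implicitly used in \eqref{eq:conditional_dist} are precisely those produced by the squared-TT recursion of Proposition \ref{prop:marginal} and are mutually consistent under further integration, which is what guarantees that the map is genuinely triangular and its inverse is well defined.
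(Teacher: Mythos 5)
Your proof is correct and matches the paper's argument in substance: both rely on the triangular structure of $\mathcal{F}^l_{t,\tminusone}$, the identification of each component with a conditional CDF, and the chain-rule factorization of $\hat p(\bx_{\tminusone} | \bx_t, \bth, \by{}_{1:t})$. The paper packages this as a pullback/Jacobian computation (diagonal entries of the lower-triangular Jacobian are the one-dimensional conditional densities, so the determinant is their product) whereas you phrase it as dimension-by-dimension inverse probability integral transform; these are two sides of the same coin.
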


\begin{proof}
Conditioned on $\widehat{\bX}_{t} = \bx_t $ and $\widehat{\bTh} = \bth$, the Jacobian of $\mathcal F_{t,\tminusone}^l(\cdot|\bx_t, \bth)$,  denoted by $ \mJ \in \R^{m\times m}$, is a lower triangular matrix with diagonal entries 
\[
	\mJ_{kk} = \frac{\partial F_{t,\tminusone,k} (x_{\tminusone,k} | \bx_t, \bth, \bx_{\tminusone,<k})}{\partial x_{\tminusone,k}} = \hat p(x_{\tminusone,k} | \bx_t, \bth, \bx_{\tminusone,<k}, \by{}_{1:t}), \quad k = 1,\ldots,m,
\]
by \eqref{eq:conditional_dist}. Thus, the pullback of the uniform density under $\mathcal F_{t,\tminusone}^l(\cdot|\bx_t, \bth)$ is
\begin{align*}
	(\mathcal F_{t,\tminusone}^l)^\sharp \mathrm{uniform}(\bx_t) =  |\mJ| = \prod_{k=1}^{m}  \hat p(x_{\tminusone,k} | \bx_t, \bth, \bx_{\tminusone,<k}, \by{}_{1:t}) =  \hat p(\bx_{\tminusone} | \bx_t, \bth, \by{}_{1:t}).
\end{align*}
This concludes the result. 
\end{proof}

Similarly, one can integrate the state $\bx_t$ in the approximation $\hat \pi(\bx_t, \bth, \bx_{\tminusone} | \by{}_{1:t})$ from the left variable $x_{t,1}$ to the right variable $x_{t,m}$ to define an upper conditional KR rearrangement 
\begin{equation}\label{eq:upper_KR_conditional}
	\mathcal{F}^u_{t,t}(\bx_t | \bth, \bx_{\tminusone})
	= \begin{bmatrix*}[l] 
	F_{t,t,1} & \hspace{-9pt} (x_{t,1} & \hspace{-10pt} | \bx_{t,>1}, \bth, \bx_{\tminusone}) \\ & \svdots & \\ F_{t,t,m} & \hspace{-9pt} (x_{t,m} & \hspace{-10pt} | \bth, \bx_{\tminusone}) \end{bmatrix*},
\end{equation}
Inverting the conditional map $\mathcal{F}^u_{t,t}$, we are able to transform a uniform random variable to the conditional random variable 
\[
	(\widehat{\bX}_{t} | \widehat{\bTh} = \bth, \widehat{\bX}_{\tminusone} = \bx_{\tminusone},  \bY{}_{1:t} = \by{}_{1:t}) \sim \hat p(\bx_t | \bth, \bx_{\tminusone}, \by{}_{1:t}).
\]
This way, the upper conditional map $\mathcal{F}^u_{t,t}$ defines a \emph{forward} sampler that generates samples forward in time. This will be used in particle filtering.

\subsection{Particle filter}

The upper conditional map $\mathcal{F}^u_{t,t}$ in \eqref{eq:upper_KR_conditional} naturally define a particle filter accompanying the recursive approximations in Alg.~\ref{alg:stt}. Suppose weighted samples $\{\widehat{\bTh}_{\vphantom{t}}^{(i)}, \widehat{\bX}_{0:\tminusone}^{(i)}, W_{\tminusone}^{(i)} \}_{i = 1}^N $ follow the joint posterior density $p(\bth, \bx_{0:\tminusone} |\by{}_{1:\tminusone})$ at time $t{-}1$. Conditioned on each pair of $\{\widehat{\bTh}_{\vphantom{t}}^{(i)}, \widehat{\bX}_{\tminusone}^{(i)}\}_{i=1}^N$, we can invert $\mathcal{F}^u_{t,t}$ to obtain a new sample
\begin{equation}\label{eq:forward_sampling}
	\widehat{\bX}_t^{(i)} = (\mathcal{F}_{t,t}^{u})^{-1} ( \bXi_{\vphantom{t}}^{(i)} | \widehat{\bTh}_{\vphantom{t}}^{(i)},\widehat{\bX}_{\tminusone}^{(i)} ), \quad \bXi_{\vphantom{t}}^{(i)} \sim \mathrm{uniform}(\bxi; [0,1]^m).
\end{equation}
The sample $\widehat{\bX}_t^{(i)}$ follows the conditional density $\hat p(\bx_t |\bth, \bx_{\tminusone}, \by{}_{1: t})$. 
Expanding the sample state path $\widehat{\bX}_{0: \tminusone}^{(i)}$ by $\widehat{\bX}_t^{(i)}$, the updated weighted samples $\{\widehat{\bTh}_{\vphantom{t}}^{(i)}, \widehat{\bX}_{0: t}^{(i)}, W_{\tminusone}^{(i)}\}_{i =1}^{N}$ jointly follow the normalized density 
\begin{equation}\label{eq:forward_sampling_density}
	\hat p(\bx_t |\bth, \bx_{\tminusone}, \by{}_{1: t}) \, p(\bth, \bx_{0:\tminusone} |\by{}_{1: \tminusone}).
\end{equation}

After applying the sampling procedure in \eqref{eq:forward_sampling}, we can update the weights of the updated samples $\{\widehat{\bTh}_{\vphantom{t}}^{(i)}, \widehat{\bX}_{0: t}^{(i)}, W_{\tminusone}^{(i)}\}_{i =1}^{N}$ according to the ratio of the new joint posterior density $p(\bth, \bx_{0: t} | \by{}_{1: t}) $ in \eqref{eq:joint_post} to the sampling density in \eqref{eq:forward_sampling_density}, which take the form 
\[
	r_{\rm f}(\bth, \bx_{0:t}) = \frac{ p(\bth, \bx_{0: t} |\by{}_{1: t})}{\hat p(\bx_t |\bth, \bx_{\tminusone}, \by{}_{1: t}) p(\bth, \bx_{0:\tminusone} |\by{}_{1: \tminusone})} .	
\]
Following the recursive form of $p(\bth, \bx_{0: t} |\by{}_{1: t})$ in \eqref{eq:joint_post}, we have
\begin{align}
	r_{\rm f}(\bth, \bx_{0:t}) \propto \omega_{\rm f}(\bth, \bx_{\tminusone:t}) := \frac{ f(\bx_t |\bx_{\tminusone}, \bth) g(\by{}_t |\bx_t, \bth)}{ \hat p(\bx_t |\bth, \bx_{\tminusone}, \by{}_{1: t}) } .\label{eq:update_weight}
\end{align}
Thus, by updating the weights using $W_{t}^{(i)} = W_{\tminusone}^{(i)} \, \omega_{\rm f}(\widehat{\bTh}_{\vphantom{t}}^{(i)}, \widehat{\bX}_{\tminusone:t}^{(i)})$ followed by the renormalization  
\[
W_{t}^{(i)} = \frac{W_t^{(i)}}{\sum_{i=1}^{N} W_t^{(i)}} \quad \text{for} \quad i = 1, \ldots, N, 
\] 
we have the reweighed samples $\{\widehat{\bTh}_{\vphantom{t}}^{(i)}, \widehat{\bX}_{0: t}^{(i)}, W_{t}^{(i)}\}_{i =1}^{N}$ that follow the joint posterior density $p(\bth, \bx_{0: t} |\by{}_{1: t})$. Alg.~\ref{alg:stt_weights} provides details of the particle filter accompanying the sequential learning procedure in Alg.~\ref{alg:stt}.

\begin{myalg}{One iteration of the particle filter accompanying Alg.~\ref{alg:stt}.}\label{alg:stt_weights}
\begin{enumerate}[leftmargin=18pt]

\item[(a)] At time $t{-}1$, suppose we have weighted samples $\{\widehat{\bTh}_{\vphantom{t}}^{(i)}, \widehat{\bX}_{0:\tminusone}^{(i)}, W_{\tminusone}^{(i)} \}_{i = 1}^N $ following the joint density $p(\bth, \bx_{0:\tminusone} |\by{}_{1:\tminusone})$.
\item[(b)] At time $t$, given the approximate density $\hat \pi(\bx_t, \bth, \bx_{\tminusone} | \by{}_{1:t})$ computed in Alg.~\ref{alg:stt}, apply Proposition \ref{prop:marginal} to integrate $\hat \pi$ from the left variable $x_{t,1}$ to the right variable $x_{t,m}$ to define a conditional KR rearrangement $\mathcal{F}_{t,t}^u$ in the form of \eqref{eq:upper_KR_conditional}.
\item[(c)] For each of $\{\widehat{\bTh}_{\vphantom{t}}^{(i)}, \widehat{\bX}_{0:\tminusone}^{(i)}\}_{i = 1}^N $, invert  $\mathcal{F}_{t,t}^u$ as in \eqref{eq:forward_sampling} to generate a new sample $\widehat{\bX}_t^{(i)}$, and then update the weights according to $ W_t^{(i)} = W_{\tminusone}^{(i)} \, \omega_{\rm f}(\widehat{\bTh}_{\vphantom{t}}^{(i)}, \widehat{\bX}_{\tminusone:t}^{(i)})$, where the function $\omega_{\rm f}(\cdot, \cdot)$ is defined in \eqref{eq:update_weight}. 
\item[(d)] Renormalize the weights $W_t^{(i)} \gets W_t^{(i)}/\sum_{i=1}^{N} W_t^{(i)}$.
\end{enumerate}
\end{myalg}

In Alg.~\ref{alg:stt_weights}, constructing the conditional densities in step (b), which defines the conditional KR rearrangement $ \mathcal F_{t,t}^u $, requires $ \mathcal O(m\ell r^3) $ flops. This step  does not involve any sampling, and thus its cost is independent of the sample size $ N $. Then, in step (c), generating $ N $ samples from the resulting conditional KR rearrangement requires an additional $ \mathcal O(Nm\ell (r^2 + \log\ell+r + c)) $ flops, where $c$ is the maximum number of iterations used in the root finding algorithms, as discussed in Sec.~\ref{sec:stt}.

In summary, Alg.~\ref{alg:stt_weights} can be viewed as a standard particle filter with TT approximations to be the proposal density.
Commonly used resampling techniques in SMC methods can also be used in Alg.~\ref{alg:stt_weights} as a rejuvenation  to re-balance the weights.

\subsection{Path estimation and smoothing}\label{sec:smoothing}

Similar to other particle filter algorithms, weights computed by Alg.~\ref{alg:stt_weights} may degenerate over time. This has an intuitive explanation in the joint state and parameter  estimation context. The locations of the parameter samples are fixed at the initial time, so that the weights (and hence the effective sample size) necessarily degenerate because the posterior parameter density may concentrate with more data observed over time. 
We overcome this issue by drawing random samples from the last approximation $\hat p(\bx_T, \bth, \bx_{T\text{-}1} | \by{}_{1: T})$---which is conditioned on all available data by construction---and removing the approximation bias by designing  a path estimation algorithm accompanying Alg.~\ref{alg:stt} to replace the particle filtering. The path estimation algorithm also naturally leads to particle smoothing.

Our path estimation algorithm uses the same sequence of non-negativity-preserving approximations constructed in Alg.~\ref{alg:stt}.
It starts at the final time $T$ and recursively samples backward in time using the lower conditional map $\mathcal{F}^l_{t,\tminusone}$ defined in \eqref{eq:lower_KR_conditional}.
At time $T$, initial samples $\{\widehat{\bX}_{T\text{-}1}^{(i)}, \widehat{\bX}_T^{(i)}, \widehat{\bTh}_{\vphantom{t}}^{(i)}\}_{i = 1}^N$ are generated from $  \hat p(\bx_T, \bth, \bx_{T\text{-}1} | \by{}_{1: T}) $ using a full KR rearrangement $\mathcal{F}^l_T$ defined in \eqref{eq:lower_KR_full}.
Then, at each of steps $t = T{-}1, \ldots, 1$, for each of $\{\widehat{\bTh}_{\vphantom{t}}^{(i)}, \widehat{\bX}_{t:T}^{(i)} \}_{i = 1}^N $, we invert the lower conditional map $\mathcal{F}^l_{t,\tminusone}$ to obtain 
\begin{equation}\label{eq:backward_sampling}
\widehat{\bX}_{\tminusone}^{(i)} =  (\mathcal{F}_{t,\tminusone}^{l})^{-1} ( \bXi_{\tminusone}^{(i)} | \widehat{\bTh}_{\vphantom{t}}^{(i)},\widehat{\bX}_{t}^{(i)} ), \quad \bXi_{\tminusone}^{(i)} \sim \mathrm{uniform}(\bxi; [0,1]^m).
\end{equation}
Following from Proposition \ref{prop:backward}, after completing backward recursion, each pair of the parameter sample and state path sample in $\{\widehat{\bTh}_{\vphantom{t}}^{(i)}, \widehat{\bX}_{0:T}^{(i)}\}_{i = 1}^N$ follows the joint density
\begin{equation}\label{eq:smooting_density}
	 \hat p(\bth, \bx_{0: T} | \by{}_{1: T}) = \hat p(\bx_T, \bth, \bx_{T\text{-}1} | \by{}_{1: T}) \prod_{t=1}^{T-1} \hat p(\bx_{\tminusone} |\bth, \bx_{t} ,  \by{}_{1: t}).
\end{equation}
Conditioned on $ \bth $ and $ \bx_t $, the state $ \bx_{\tminusone} $ is independent of the data $\by_{t:T}$ observed at future times, and hence we have $ p(\bx_{\tminusone} |\bth, \bx_{t} ,  \by{}_{1: t}) = p(\bx_{\tminusone} |\bth, \bx_{t} ,  \by{}_{1: T}) $. Using this identity, the original joint posterior density $p(\bth, \bx_{0: T} | \by{}_{1: T})$ defined in \eqref{eq:joint_post_pdf} has the same factorized form as the density in \eqref{eq:smooting_density}. Therefore, the above backward sampling procedure can have a reasonable efficiency if the approximations built in Alg.~\ref{alg:stt} are sufficiently accurate.

\begin{figure}[h!]
	\centering
	\includegraphics[width=0.99\textwidth]{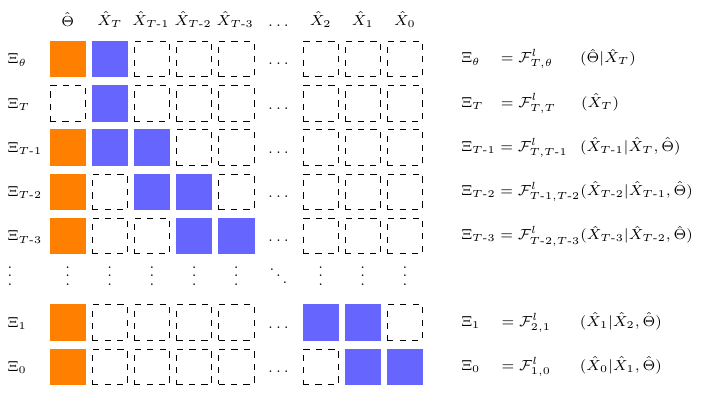}\vspace{-0pt}
	\caption{Conditional dependency structures of the joint map $\cF$ used in particle smoothing. The blue entries indicate the conditional dependency on the states, and the orange entries indicate the conditional dependency on the parameters. The $2\times 2$ sub-matrix in the top left corner represents the structure of the first two blocks of the map $\mathcal{F}^l_T$.} \label{fig:dependency_pattern}
\end{figure}

In fact, the backward recursion can be considered as the inverse of a joint KR rearrangement $\cF:\Uptheta \times \mathcal{X}^{T+1}  \rightarrow [0,1]^{d + (m+1) \times T }$ that transforms approximate posterior random variables $(\widehat{\bTh}, \widehat{\bX}_{0:T} | \bY_{1:T} = \by{}_{1:T})$ that follow the density \eqref{eq:smooting_density}, to the reference uniform random variables. As illustrated in Figure \ref{fig:dependency_pattern}, the joint KR rearrangement $\cF$ has a block sparse structure. The non-zero blocks in the first column reflect the conditional dependency of each of the state variables on the parameter, while the non-zero blocks on the first sub-diagonal reflect the Markov dependency of state variables. From this perspective, our TT-based construction of the joint KR rearrangement $\cF$ can be viewed as a constructive proof and a numerical implementation of the decomposition theorem \cite[Theorem 12]{spantini2018inference} for the joint parameter and state estimation in general state-space models. 

For a pair of sample parameter and sample state path $(\widehat{\bTh}_{\vphantom{t}}^{(i)}=\bth, \widehat{\bX}_{0:T}^{(i)}=\bx_{0:T})$ generated by the backward recursion, it yields a weighted representation of the posterior sample with the importance weight 
\[
	r_{\rm b}(\bth, \bx_{0:T}) = \frac{p(\bth, \bx_{0: T}| \by{}_{1: T} )}{\hat p(\bth, \bx_{0:T} | \by{}_{1: T}) }.
\]
Following the recursive form of $p(\bth, \bx_{0: t} |\by{}_{1: t})$, the ratio $r_{\rm b}(\bth, \bx_{0:T})$ has an  unnormalized computable form
\begin{align}
	r_{\rm b}(\bth, \bx_{0:T}) & \propto  \omega_{\rm b}(\bth, \bx_{0:T}) \nonumber\\
	& :=  {p(\bth) p(\bx_0 |\bth)}
	\Big({\prod_{t=1}^{T-1} \frac{f(\bx_t |\bx_{\tminusone}, \bth) g(\by{}_t |\bx_t, \bth)}{\hat p(\bx_{\tminusone} |\bth, \bx_{t} ,  \by{}_{1: t})} } \Big) \frac{f(\bx_T |\bx_{T\text{-}1}, \bth) g(\by{}_T |\bx_T, \bth)}{\hat p(\bx_T, \bth, \bx_{T\text{-}1} | \by{}_{1: T}) }. \label{eq:smoothing_weights}
\end{align}
We summarize the particle smoothing procedure in Alg.~\ref{alg:stt_smooth}. The set of weighted samples $ \{\widehat{\bTh}_{\vphantom{t}}^{(i)}, \widehat{\bX}_{0:T}^{(i)}, W_{\vphantom{t}}^{(i)}\}_{i=1}^{N}$ is an unbiased representation of the joint posterior random variables $(\bTh, \bX_{0: T} |\by{}_{1: T})$. It mitigates the particle degeneracy because the samples are drawn conditioned all observed data $ \by{}_{1:T} $. 

\begin{myalg}{Path estimation algorithm accompanying Alg.~\ref{alg:stt}.}\label{alg:stt_smooth}
\begin{enumerate}[leftmargin=18pt]
\item[(a)] Generate samples $ \{\widehat{\bX}_{T}^{(i)}, \widehat{\bTh}_{\vphantom{t}}^{(i)}, \widehat{\bX}_{T\text{-}1}^{(i)} \}_{i = 1}^N  $ from $\hat p(\bx_T, \bth, \bx_{T\text{-}1} |\by{}_{1:T}) $ using $\mathcal{F}^l_T$.
\item[(b)] For $t = T-1, \ldots, 1$, do the following
\begin{itemize}[leftmargin=18pt]
\item For each of $\{\widehat{\bTh}_{\vphantom{t}}^{(i)}, \widehat{\bX}_{t:T}^{(i)}\}_{i = 1}^N $, invert the lower conditional map $\mathcal{F}_{t,\tminusone}^l$ as in \eqref{eq:backward_sampling} to generate a new sample $\widehat{\bX}_{\tminusone}^{(i)}$. 
\end{itemize}
\item[(c)] For each of $\{\widehat{\bTh}_{\vphantom{t}}^{(i)}, \widehat{\bX}_{0:T}^{(i)}\}_{i = 1}^N $, compute the weight $W^{(i)}$ using \eqref{eq:smoothing_weights}.  \item[(d)] Normalize the weights as 
\(
W^{(i)} \gets  W^{(i)} / \sum_{i=1}^{N} W^{(i)}.
\)
\end{enumerate}
\end{myalg}

In our setting, applying one step of particle filtering and applying one step of particle smoothing have the same computational complexity. 
For particle filtering, integrals used for constructing the upper conditional map $\mathcal{F}_{t,t}^u$ in \eqref{eq:upper_KR_conditional} take the opposite direction to the integrals used for building $\hat \pi(\bx_t, \bth | \by{}_{1:t})$ in Alg.~\ref{alg:stt}. 
This incurs an additional $\mathcal{O}(m\ell r^3)$ flops. 
In comparison, constructing the lower conditional map $\mathcal{F}_{t,\tminusone}^l$ in \eqref{eq:lower_KR_conditional} for particle smoothing shares the same integrals as in Alg.~\ref{alg:stt}, and thus does not incur additional computational cost. 

Marginalizing the square TT approximation \eqref{eq:sqrt_2} from either end has an $\mathcal{O}(\ell r^3)$ computational complexity per coordinate. This is much lower than marginalizing variables in the middle, which has an $\mathcal{O}(\ell r^6)$ computational complexity per coordinate. Thus, the variable ordering $(\bx_t, \bth, \bx_{\tminusone})$ is computationally advantageous for both smoothing and filtering. One can also consider the variable ordering $(\bth, \bx_t, \bx_{\tminusone})$ if only the smoothing algorithm is used for debiasing, as Alg.~\ref{alg:stt} and the lower conditional map in Alg.~\ref{alg:stt_smooth} only need to integrate the state $\bx_{\tminusone}$.

\section{Error analysis}\label{sec:error}

At the core of Alg.~\ref{alg:basic} and \ref{alg:stt} is the recursive approximation of the joint densities of the posterior random variables $(\bX_t,\bTh, \bX_{\tminusone} |\bY_{1:t} = \by{}_{1:t})$. We begin this section with an analysis of the accumulation of approximation errors in general state-space models. Based on this result, we provide an upper bound on the TT-approximation errors of Alg.~\ref{alg:basic} and show the stability of Alg.~\ref{alg:stt}. Although most of the analysis presented centers around the TT decomposition, our results on the error accumulation rate are readily generalizable to other function approximation schemes applied to state-space models. For instance, one can consider the deep polynomial method of \cite{cui2023self} and the sum-of-square approximation of \cite{zanger2024sequential}. The same error accumulation analysis will apply. 

\subsection{Error decomposition and propagation} \label{sec:error_decomp}

Starting with the initial unnormalized joint posterior density 
\[\pi(\bx_1, \bth, \bx_0 | \by{}_{1}) = p(\bth)p(\bx_0|\bth)f(\bx_1|\bx_0, \bth)\lk{1},\] 
we define the unnormalized joint posterior density recursively as
\begin{align*}
\pi(\bx_t, \bth, \bx_{\tminusone} | \by{}_{1:t}) & = \int \pi(\bx_{\tminusone}, \bth, \bx_{t\text{-}2} | \by{}_{1:\tminusone}) \dd \bx_{t\text{-}2} \; \tr{t}\lk{t} \\
& = \quad\, \pi(\bx_{\tminusone}, \bth | \by{}_{1:\tminusone}) \tr{t}\lk{t},
\end{align*}
which follows a similar derivation to that of \eqref{eq:rec_post}. 
We use shorthands $p_t(\bx_t, \bth, \bx_{t\text{-}1} ) := p(\bx_t, \bth, \bx_{\tminusone} | \by{}_{1:t})$ and $\hat p_t(\bx_t, \bth, \bx_{\tminusone})$ $:= \hat p(\bx_t, \bth, \bx_{\tminusone} | \by{}_{1:t})$ to respectively denote the joint posterior density at time $t$ and its TT-based approximation. Similar notation is adopted for representing their unnormalized counterparts. We omit the input variables when no confusion arises. 
The density $\pi_t$ has the normalizing constant $ z_t := \int \pi(\bx_t, \bth, \bx_{\tminusone} | \by{}_{1:t}) \dd \bx_t \, \dd \bth \, \dd \bx_{\tminusone} $, which is indeed the evidence $p(\by{}_{1:t})$ introduced in \eqref{eq:joint_post_pdf}.

Recall that in each iteration of Alg.~\ref{alg:basic} and \ref{alg:stt}, the unnormalized joint posterior density $\pi_t$ cannot be directly evaluated since it is the density of marginal random variables. Instead, we introduce the non-separable, unnormalized approximation $q_t$ in step (a) of both algorithms, and then approximate $q_t$ using a TT-based approximation $\hat\pi_t$ in step (b). As shown in \eqref{eq:tt_0} and \eqref{eq:sqrt_0}, the function $q_t$ takes the form
\[	
	q_t(\bx_t, \bth, \bx_{\tminusone}) = \hat \pi( \bx_{\tminusone}, \bth | \by{}_{1:\tminusone}) f(\bx_t |\bx_{\tminusone}, \bth) g(\by{}_t |\bx_t, \bth),
\]
where $\hat \pi( \bx_{\tminusone}, \bth | \by{}_{1:\tminusone})$
is the marginalization of the previous TT-based approximation. The construction of the function $q_t$ enables pointwise function evaluations that are necessary for building the new TT decomposition used by $\hat\pi_t$.

This reveals that each iteration of Alg.~\ref{alg:basic} and \ref{alg:stt} can be considered as a two-step approximation procedure: the unnormalized joint density $\pi_t$ is approximated by the function $q_t$, from which we construct the TT-based approximation $\hat \pi_t$. Considering the triangle inequality for some distance metric $D(\cdot, \cdot)$, the error $D(\hat \pi_t, \pi_t) $ has the decomposition 
\begin{equation} 
D( \hat\pi_t ,  \pi_t ) \leq D( \hat \pi_t ,  q_t )  + D( q_t,  \pi_t ), \label{eq:norm_tri}
\end{equation}
where $D(\hat \pi_t,  q_t)$ is the \emph{approximation error} in the current iteration and $D( q_t, \pi_t)$ is the \emph{propagation error} from the previous iteration. In particular, we consider the $L^1$ distance of unnormalized densities and the $L^2$ distance of the square roots of unnormalized densities as the distance metrics for bounding these errors. The $L^1$ distance is useful for approximations that cannot preserve non-negativity (e.g., Alg.~\ref{alg:basic}) and is analogous to the total variation distance of probability measures. The $L^2$ distance on square-root densities is directly related to the Hellinger distance (see Lemma \ref{lemma:sirt_error}) and we use it to analyze Alg.~\ref{alg:stt}. We aim to understand how the approximation errors accumulate over time in the recursive algorithms. Based on the following assumption, we first analyze the propagation error in Propositions~\ref{prop:L1_propagation} and~\ref{prop:L2_propagation}. 

\begin{assumption}\label{assumption:propagation}
For the state transition process and the observation process, we assume either of the following bounds holds
\begin{align}
	C^{(g)}_t & = \sup_{\bx_t \in \mathcal{X}, \bth \in \Uptheta} g(\by{}_t |\bx_t, \bth) < \infty , \label{eq:ctg} 
	\\
	C^{(f)}_t & = \sup_{\bx_{\tminusone} \in \mathcal{X}, \bth \in \Uptheta} \, \int f(\bx_t |\bx_{\tminusone}, \bth) g(\by{}_t |\bx_t, \bth) \dd \bx_t < \infty.\label{eq:ctf}
\end{align}
\end{assumption}

Generally speaking, any Lipschitz continuous $f$ and $g$ satisfy the above assumptions, which are usually the case in many applications. For example, an observation model with the commonly used Gaussian noise has a bounded $C^{(g)}_t$ for all $t$. 

\begin{proposition}\label{prop:L1_propagation}
	Suppose either bound of Assumption \ref{assumption:propagation} holds. The propagation error $\| q_t - \pi_t \|_{L^1}$ at time $t$ is bounded by the previous total error
	\begin{equation}
		\|q_t -  \pi_t\|_{L^1}  \leq   C_t^{(h)} \| \hat \pi_{\tminusone} - \pi_{\tminusone} \|_{L^1}, \quad h \in \{f, g\}.
	\end{equation}
	\end{proposition}
	
	\begin{proof}
	By the construction of $q_t$, we express the propagation error as
	\begin{align}
		\| q_t - \pi_t\|_{L^1} & = \int \Big| {\hat \pi( \bx_{\tminusone}, \bth | \by{}_{1:\tminusone})} - {\pi( \bx_{\tminusone}, \bth | \by{}_{1:\tminusone})}\Big| r(\bx_{\tminusone} , \bth)  \dd \bx_{\tminusone} \dd \bth , \label{eq:L1_propagation}
	\end{align}
	where $ r(\bx_{\tminusone} , \bth) = \int f(\bx_t |\bx_{\tminusone}, \bth) g(\by{}_t |\bx_t, \bth) \dd \bx_{t} $. Either condition \eqref{eq:ctg} or \eqref{eq:ctf} leads to
	\(
		\sup_{\bx_{\tminusone} \in \mathcal{X}, \bth \in \Uptheta} r(\bx_{\tminusone} , \bth) \leq C_t^{(h)}
	\)
	for $h \in \{f, g\}$, and thus we have
	\begin{align*}
		\| q_t - \pi_t\|_{L^1} 
			& \leq  C_t^{(h)} \big\|\hat \pi( \bx_{\tminusone}, \bth | \by{}_{1:\tminusone}) - \pi( \bx_{\tminusone}, \bth | \by{}_{1:\tminusone}) \big\|_{L^1}.
	\end{align*}
	Applying Lemma \ref{lemma:L1} in Appendix \ref{appendix:A}, the $ L^1 $ distance of the marginal densities is bounded by that of the joint densities as
	\begin{align*}
		\big\|\hat \pi( \bx_{\tminusone}, \bth | \by{}_{1:\tminusone}) - \pi( \bx_{\tminusone}, \bth | \by{}_{1:\tminusone}) \big\|_{L^1} \leq  \| \hat \pi_{\tminusone} - \pi_{\tminusone} \|_{L^1},
	\end{align*}
	which leads to the result.
\end{proof}

\begin{proposition}\label{prop:L2_propagation}
	Suppose either bound of Assumption \ref{assumption:propagation} holds. The propagation error $\lerr{\surd q_t - \surd \pi_t}$ at time $t$ is bounded by the previous total error
	\begin{equation}
		\lerr{\surd q_t - \surd \pi_t}  \leq  \surd C_t^{(h)} \lerr{\surd \hat \pi_{\tminusone} - \surd \pi_{\tminusone}}, \quad h \in \{f, g\}.
	\end{equation}
	\end{proposition}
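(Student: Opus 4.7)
The plan is to mirror the proof of Proposition \ref{prop:tt_L2_propagation}, but working with the square-root quantities so that the Hellinger-type bound in \eqref{eq:theorem_13} can be leveraged. The key observation is that $q_t$ and $\pi_t$ share the common factor $f(\bx_t|\bx_{\tminusone},\bth)g(\by_t|\bx_t,\bth)$, differing only through the multiplicative prefactor $\hat\pi(\bx_{\tminusone},\bth|\by_{1:\tminusone})$ versus $\pi(\bx_{\tminusone},\bth|\by_{1:\tminusone})$. Consequently, taking square roots is clean:
\[
\sqrt{q_t(\bx_t,\bth,\bx_{\tminusone})} - \sqrt{\pi_t(\bx_t,\bth,\bx_{\tminusone})} = \Big(\sqrt{\hat\pi(\bx_{\tminusone},\bth|\by_{1:\tminusone})} - \sqrt{\pi(\bx_{\tminusone},\bth|\by_{1:\tminusone})}\Big)\sqrt{f(\bx_t|\bx_{\tminusone},\bth)g(\by_t|\bx_t,\bth)}.
\]

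First, I would square this identity and integrate over $(\bx_t,\bth,\bx_{\tminusone})$. Integrating over $\bx_t$ first yields the same quantity $r(\bx_{\tminusone},\bth) = \int f(\bx_t|\bx_{\tminusone},\bth)g(\by_t|\bx_t,\bth)\dd\bx_t$ that appeared in Proposition \ref{prop:tt_L2_propagation}. Under assumption \eqref{eq:ctf} this is directly bounded by $C_t^{(f)}$; under assumption \eqref{eq:ctg} one pulls $g$ out by the sup bound $C_t^{(g)}$ and the remaining integral of $f$ over $\bx_t$ equals one because $f$ is a transition density. Either way $\sup_{\bx_{\tminusone},\bth} r(\bx_{\tminusone},\bth) \leq C_t^{(h)}$, giving
\[
\lerr{\surd q_t - \surd \pi_t}^2 \leq C_t^{(h)} \int \Big(\sqrt{\hat\pi(\bx_{\tminusone},\bth|\by_{1:\tminusone})} - \sqrt{\pi(\bx_{\tminusone},\bth|\by_{1:\tminusone})}\Big)^2 \dd\bth\,\dd\bx_{\tminusone}.
\]

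The remaining step is the $L^2$ analogue of Lemma \ref{lemma:L1}: marginalization does not increase the $L^2$ distance between square roots of (unnormalized) densities. Writing $\bar a(\bx_{\tminusone},\bth) = \int \hat\pi_{\tminusone}\dd\bx_{t-2}$ and $\bar b(\bx_{\tminusone},\bth)= \int \pi_{\tminusone}\dd\bx_{t-2}$, the identity $(\sqrt{\bar a}-\sqrt{\bar b})^2 = \bar a + \bar b - 2\sqrt{\bar a\,\bar b}$ combined with the Cauchy--Schwartz bound $\int\sqrt{\hat\pi_{\tminusone}\,\pi_{\tminusone}}\,\dd\bx_{t-2} \leq \sqrt{\bar a\,\bar b}$ yields the pointwise inequality $(\sqrt{\bar a}-\sqrt{\bar b})^2 \leq \int(\sqrt{\hat\pi_{\tminusone}}-\sqrt{\pi_{\tminusone}})^2\dd\bx_{t-2}$. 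Integrating this over $(\bx_{\tminusone},\bth)$ gives exactly $\lerr{\surd\hat\pi_{\tminusone}-\surd\pi_{\tminusone}}^2$ on the right, completing the chain.

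The only non-routine ingredient is the final Cauchy--Schwartz-based marginalization inequality, but this is a one-line computation once one writes out the squared square-root differences; the rest of the argument is a direct parallel to the $L^1$ propagation proof.
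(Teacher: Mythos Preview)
Your proof is correct and follows essentially the same route as the paper's: factor out the common $\sqrt{fg}$ term, integrate over $\bx_t$ to produce $r(\bx_{\tminusone},\bth)\leq C_t^{(h)}$, and then invoke a marginalization inequality for $L^2$ distances of square roots. The only difference is that the paper packages the last step as Lemma~\ref{lemma:L2_sqrt} and proves it via Jensen's inequality applied to the convex function $r\mapsto(\sqrt r-1)^2$, whereas you prove the same pointwise inequality inline via Cauchy--Schwarz; the two arguments are equivalent and equally short.
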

	
	\begin{proof}
	By the construction of $q_t$, we express the squared propagation error as
	\begin{align}
		\| \surd q_t - \surd \pi_t\|_{L^2}^2 & \!=\!\!\int \!\! \Big(\! \sqrt{\hat \pi( \bx_{\tminusone}, \bth | \by{}_{1:\tminusone})} -\! \sqrt{\pi( \bx_{\tminusone}, \bth | \by{}_{1:\tminusone})}\Big)^2 r(\bx_{\tminusone} , \bth)  \dd \bx_{\tminusone} \dd \bth , \label{eq:L2_propagation}
	\end{align}
	where $ r(\bx_{\tminusone} , \bth) = \int f(\bx_t |\bx_{\tminusone}, \bth) g(\by{}_t |\bx_t, \bth) \dd \bx_{t} $. Either condition \eqref{eq:ctg} or \eqref{eq:ctf} leads to
	\(
		\sup_{\bx_{\tminusone} \in \mathcal{X}, \bth \in \Uptheta} r(\bx_{\tminusone} , \bth) \leq C_t^{(h)}
	\)
	for $h \in \{f, g\}$, and thus we have
	\begin{align*}
		\| \surd q_t - \surd \pi_t\|_{L^2} 
		 & \leq  \bigg( C_t^{(h)} \int \Big(\sqrt{\hat \pi( \bx_{\tminusone}, \bth | \by{}_{1:\tminusone})} - \sqrt{\pi( \bx_{\tminusone}, \bth | \by{}_{1:\tminusone})}\Big)^2  \dd \bx_{\tminusone} \dd \bth \bigg)^{1/2} \\
		 & = \surd C_t^{(h)} \big\|\sqrt{\hat \pi( \bx_{\tminusone}, \bth | \by{}_{1:\tminusone})} - \sqrt{\pi( \bx_{\tminusone}, \bth | \by{}_{1:\tminusone})}\big\|_{L^2}.
	\end{align*}
	Applying Lemma \ref{lemma:L2_sqrt} in Appendix \ref{appendix:A}, the $ L^2 $ distance for the marginal densities is bounded by that for the joint densities as
	\begin{align*}
		\big\|\sqrt{\hat \pi( \bx_{\tminusone}, \bth | \by{}_{1:\tminusone})} - \sqrt{\pi( \bx_{\tminusone}, \bth | \by{}_{1:\tminusone})}\big\|_{L^2} \leq  \lerr{\surd{\hat \pi_{\tminusone}} - \surd{\pi_{\tminusone}}},
	\end{align*}
	which leads to the result.
\end{proof}

Based on the error propagation bounds, we can then analyze the total approximation errors of our sequential estimation algorithms. In the following, we use assumed approximation errors to show the total errors of Alg.~\ref{alg:basic} and \ref{alg:stt}. 

\begin{theorem} \label{thm1}
	Suppose either bound of Assumption \ref{assumption:propagation} holds and the approximation $\hat \pi_t$ in Alg.~\ref{alg:basic} satisfies $\|\hat \pi_t -  q_t\|_{L^1} \leq \epsilon_t$. For $t > 0$, the total error satisfies 
	\[
	\| \hat \pi_t -  \pi_t\|_{L^1} \leq  C_t^{(h)} \| \pi_{\tminusone} - \hat \pi_{\tminusone} \|_{L^1} + \epsilon_t \leq \sum_{k=1}^{t} C^{t-k} \epsilon_k,	\quad C = \sup_t C_t^{(h)},
	\]
	for $h\in\{f,g\}$.
\end{theorem}
	
\begin{proof}
	The result follows from the triangle inequality \eqref{eq:norm_tri} and induction.
\end{proof}

\begin{theorem} \label{thm2}
	Suppose either bound of Assumption~\ref{assumption:propagation} hold and the approximation $\hat \pi_t$ in Alg.~\ref{alg:stt} satisfies $\|\surd \hat \pi_t - \surd q_t\|_{L^2} \leq \epsilon_t$. For $t > 0$, the Hellinger distance between $ p_t $ and its approximation $ \hat p_t $ is bounded by
	\[
		\dhd{\hat p_t}{p_t} \leq \frac{\surd 2}{\sqrt{p(\by{}_{1:t})}} \sum_{k=1}^{t} C^{(t-k)/2} \epsilon_k, 
	\]
	where $C = \sup_t C_t^{(h)}$ with $h\in\{f,g\}$ and $p(\by{}_{1:t})$ is the evidence.
\end{theorem}
	
\begin{proof}
	We recall that there is no approximation used in Alg.~\ref{alg:stt} at the initial step $t = 0$, i.e., $\epsilon_0 = 0$, as we only have the prior for $\bX_0$ and $\bTh$. Applying the triangle inequality \eqref{eq:norm_tri} and induction, we have 
	\[
		\| \surd \hat \pi_t - \surd \pi_t\|_{L^2} \leq  \surd C_t^{(h)} \|\surd \pi_{\tminusone} - \surd \hat \pi_{\tminusone} \|_{L^2} + \epsilon_t \leq \sum_{k=1}^{t} C^{(t-k)/2} \epsilon_k.	
	\]
	Applying Lemma \ref{lemma:sirt_error}, the Hellinger distance between $ \hat p_t $ and $ p_t $ is bounded by the $ L^2 $ distance between $ \surd \hat \pi_t $ and $ \surd \pi_t $, i.e.,
\begin{equation*}
 	\dhd{\hat p_t}{p_t} \leq \frac{\surd 2}{\surd z_t} \|\surd \hat \pi_t - \surd \pi_t\|_{L^2}, 
\end{equation*}
	where $ z_t = p(\by{}_{1:t}) $ by definition. This concludes the result.
\end{proof}

By Lemmas \ref{lemma:marginal} and \ref{lemma:L1} in Appendix \ref{appendix:A}, either the Hellinger distance or the $L^1$ distance between the exact marginal and the approximate marginal is bounded from above by the corresponding distance between the joint densities. Thus, approximations of the filtering density $p(\bx_{t} | \by{}_{1:t})$ and the posterior parameter density $p(\bth | \by{}_{1:t})$ obtained by Alg.~\ref{alg:basic} and \ref{alg:stt}, which are marginalizations of the joint approximation $\hat p(\bx_t, \bth, \bx_{\tminusone} | \by{}_{1:t})$, follow the same error bounds derived here. 

\subsection{Error analysis for TT approximations}\label{sec:error_tt}

For Alg.~\ref{alg:basic}, we first show in Proposition \ref{prop:tt_error} that the approximation error $\| \hat \pi_t -  q_t\|_{L^1}$ can be bounded based on certain Sobolev-type smoothness assumptions. Then, we can apply the result of Theorem \ref{thm1} to bound the total error.

\begin{proposition}\label{prop:tt_error}
For any time $t>0$, we express the product of the state transition density and the likelihood function as 
\[
h_t(\bx_t, \bth, \bx_{\tminusone}) = f(\bx_t |\bx_{\tminusone}, \bth) g(\by{}_t |\bx_t, \bth).
\] 
We assume that state space $\mathcal{X}$ and the parameter space $\Uptheta$ are compact in the sense that they admit finite Lebesgue measures and there exist some $K \in \mathbb N $ and $s \geq 1$ such that $h_t \in W^{K+1, 2s} (\mathcal{X}\times \Uptheta \times \mathcal{X}) $. At time $t = 1$, we further assume that the (unnormalized) prior density $\pi(\bx_0, \bth) := \pi(\bx_0 | \bth) \pi(\bth)$ belongs to $ W^{K+1, 2r} (\mathcal{X}\times \Uptheta)$ for some $r \geq 1$ such that $\frac{1}{r} + \frac{1}{s} = 1$. Then, for some error controlling factor $\varepsilon_t \in (0,1)$ at time $t > 0$, there exists a TT decomposition $\hat \pi_t (\bx_t, \bth, \bx_{\tminusone}) $ with ranks 
\begin{equation}
	r_1 = \lceil \varepsilon_t^{-1/K} \rceil \quad \text{and} \quad r_k = \lceil \varepsilon_t^{-k/K}\rceil \quad \text{for}  \quad k = 2,3,\ldots, 2m+d-1, \label{eq:tt_error_rank}
\end{equation}
such that the approximation error in step (b) of Alg.~\ref{alg:basic} is bounded by
\[
\| \hat \pi_t - q_t\|_{L^1} \leq  C_\mathrm{TT} \sqrt{\Omega(\mathcal{X}\times\Uptheta\times \mathcal{X}) (2m+d-1)} \, \varepsilon_t,
\]
where $\Omega(\cdot)$ denotes the volume of the space and $C_\mathrm{TT}$ is a constant independent of $\hat \pi_t$, $q_t$, and the dimension. 
\end{proposition}

\begin{proof}
Our goal is to establish the Sobolev-type smoothness of the function $q_t$, so that results of \cite{griebel2021analysis} can be used to derive the TT ranks for some target error  measured in the $L^2$ norm. The bound on the $L^1$ norm follows from that on the $L^2$ norm using the Cauchy--Schwartz inequality.

For brevity, we define the previous marginal density $\hat \pi( \bx_{\tminusone}, \bth | \by{}_{1:\tminusone})$ as
\[
\hat \varphi_t(\bx_t, \bth, \bx_{\tminusone}) =
\left\{\begin{array}{ll}
	\hat \pi( \bx_{\tminusone}, \bth | \by{}_{1:\tminusone}), & \quad t > 1\\
	\pi( \bx_0, \bth), & \quad t = 1
\end{array}\right. ,
\]
which takes a constant value over $\bx_t$. For $t > 1$, since the marginal TT approximation $\hat \pi( \bx_{\tminusone}, \bth | \by{}_{1:\tminusone})$ is a linear combination of multivariate polynomial basis functions, the function $\hat \varphi_t$ is an analytical function. Thus, we have $\hat \varphi_t \in  W^{K+1, 2r} (\mathcal{X}\times \Uptheta \times \mathcal{X})$ for any $K \in \mathbb{N}$, $r \geq 1/2$, and $t>1$.

For $t > 0$, the $L^2$ norm of $q_t$ can be expressed as
\(
\| q_t\|_{L^2}^2 = \|\hat \varphi_t^2\, h_t^2 \|_{L^1}^{}.
\)
Applying the H\"{o}lder inequality, we have
\[
	\| q_t\|_{L^2} \leq \|\hat \varphi_t^2 \|_{L^{r}}^{1/2} \, \|h_t^2 \|_{L^{s}}^{1/2} = \|\hat \varphi_t \|_{L^{2r}} \|h_t \|_{L^{2s}} ,
\]
for $r,s \in [0, \infty]$ with $\frac{1}{r} + \frac{1}{s} = 1$. This way, we have $\| q_t\|_{L^2} < \infty$ for all $t > 0$ by our assumptions and the analyticity of $\hat \varphi_t$ for $t > 1$. For derivatives of $q_t$, with the multi-index notation and the general Leibniz rule (see \cite{constantine1996multivariate}), we have
\[ 
\partial^{\boldsymbol\alpha} q_t = \sum_{{\boldsymbol\beta} \leq {\boldsymbol\alpha}}  { \boldsymbol\alpha \choose \boldsymbol\beta} \big( \partial^{\boldsymbol\alpha-\boldsymbol\beta}\hat \varphi_t\big)  \big( \partial^{\boldsymbol\beta} h_t \big) ,
\]
where $\boldsymbol \alpha, \boldsymbol\beta$ are multi-indices. Following a similar derivation as in the $ L_2 $ norm case, we can show that $\|\partial^{\boldsymbol\alpha} q_t\|_{L^2} < \infty$ for all $|\boldsymbol \alpha| \leq K+1$, where $|\boldsymbol \alpha| = \sum \alpha_i$. Therefore, we have $q_t \in  W^{K+1, 2} (\mathcal{X}\times \Uptheta \times \mathcal{X}) $ for all $t>0$.

Given a target error trolling factor $\varepsilon_t \in (0,1)$, Theorem 4 of \cite{griebel2021analysis} states that there exists a TT decomposition $\hat \pi_t$ with ranks $r_1 = \lceil \varepsilon_t^{-1/K} \rceil$ and $r_k = \lceil \varepsilon_t^{-2k/K} \rceil$ for $k = 2,3,\ldots, 2m{+}d{-}1$ such that $\| \hat \pi_t - q_t\|_{L^2} \leq C_\mathrm{TT} \, \sqrt{2m+d-1} \, \varepsilon_t$. Then, by the Cauchy--Schwartz inequality, we have 
\[
	\| \hat \pi_t - q_t\|_{L^1}  \leq C_\mathrm{TT} \sqrt{\Omega(\mathcal{X}\times\Uptheta\times \mathcal{X})} \, \|\hat \pi_t - q_t\|_{L^2} ,
\]
and thus the result follows. 
\end{proof}

Proposition \ref{prop:tt_error} only considers bounded spaces. This can be easily extended to unbounded spaces by considering a weight measure and a slightly modified approximation scheme. See \cite{cui2021deep,cui2023self} and references therein for details. The result of \cite{griebel2021analysis} establishes sufficient conditions of the {\it a priori} error bound of TT decompositions using smoothness. In practice, there are many examples of low-rank functions that do not satisfy the smoothness assumption, e.g., products of discontinuous univariate functions. To the authors' knowledge, the analysis of TT ranks for general functions is still an active research area and the results of \cite{griebel2021analysis} may not provide a precise practical guideline for building TT decompositions. Nonetheless, Proposition \ref{prop:tt_error} provides some confidence in applying TT decompositions in the sequential state and parameter learning problems. 

It is worth mentioning that the error controlling factor $\varepsilon_t$ in Proposition \ref{prop:tt_error} determines the TT ranks and hence the computational complexity for building a TT decomposition. Proposition \ref{prop:tt_error} suggests that with a prescribed  $\varepsilon_t$, the error of a TT decomposition may increase with the dimensionality of the problem. For a class of high-dimensional problems where the Sobolev norm is equipped with suitably decaying weights \citep{dick2013high,griebel2021analysis,sloan1998quasi}, one can obtain a dimension-free error bound. Although such dimension-free bounds are not discussed here, extending Proposition \ref{prop:tt_error} to such weighted Sobolev norms is trivial. 

\begin{corollary} \label{coro:tt_thm}
	Suppose either bound of Assumption \ref{assumption:propagation} holds and the smoothness of the densities $f$ and $g$ fulfil the requirement of  Proposition \ref{prop:tt_error}. In each step of Alg.~\ref{alg:basic}, suppose further the ranks of a TT approximation is chosen according to Proposition \ref{prop:tt_error} so that it satisfies 
	\[
		\|\hat \pi_t -  q_t\|_{L^1} \leq C_\mathrm{TT} \sqrt{\Omega(\mathcal{X}\times\Uptheta\times \mathcal{X}) (2m+d-1)} \, \varepsilon_t
	\]
	for some error controlling factor $\varepsilon_t \in (0, 1)$. Then, for $t > 0$, the total error satisfies 
	\[
	\| \hat \pi_t -  \pi_t\|_{L^1} \leq  C_\mathrm{TT} \sqrt{\Omega(\mathcal{X}\times\Uptheta\times \mathcal{X}) (2m+d-1)} \sum_{k=1}^{t} C^{t-k}  \varepsilon_k,	
	\]
	where $C = \sup_t C_t^{(h)}$ with $h\in\{f,g\}$.
\end{corollary}
	
\begin{proof}
	The result is a direct consequence of Theorem \ref{thm1}.
\end{proof}

In Alg.~\ref{alg:stt}, the previous marginal density $\hat \pi( \bx_{\tminusone}, \bth | \by{}_{1:\tminusone})$ can be expressed as a sum of squares of multivariate polynomial functions (each in the TT form) to ensure the non-negativity together with a defensive term (cf. Proposition \ref{prop:marginal}). Applying the multivariate Faa di Bruno formula \citep{constantine1996multivariate}, one can show that the square root of the previous marginal density $\sqrt{\hat \pi( \bx_{\tminusone}, \bth | \by{}_{1:\tminusone})}$ yields a bounded Sobolev norm for a finite order of differentiability $K$. However, the bound on the Sobolev norm may increase with $K$. Although Theorem 4 of \cite{griebel2021analysis} may still apply in this case for a finite $K$, it requires further analysis to understand the impact of increasing Sobolev norm on the {\it a priori} error analysis of the TT decomposition. In our numerical examples (Section~\ref{sec:kalman}), we demonstrate that Alg.~\ref{alg:basic} and \ref{alg:stt} achieve comparable approximation accuracy. In the rest, we focus on analyzing the impact of the additional defensive term used in Alg.~\ref{alg:stt}. We bound the approximation error $\lerr{\surd \hat \pi_t - \surd q_t}$ in Proposition \ref{prop:L2_approximation}---we omit the proof as it is a direct consequence of Lemma~\ref{lemma:sirt_error}. Then in Corollary~\ref{coro:stt_thm}, we apply Theorem~\ref{thm2} to bound the total Hellinger error of Alg.~\ref{alg:stt} to complete the analysis.

\begin{proposition}\label{prop:L2_approximation}
		In each iteration of Alg.~\ref{alg:stt}, suppose a TT decomposition $\phi_t$ is constructed according to Proposition \ref{prop:tt_error} such that its $L^2$ error satisfies 
		\[
		\| \phi_t - \surd q_t \|_{L^2} < C_\mathrm{TT} \sqrt{\Omega(\mathcal{X}\times\Uptheta\times \mathcal{X}) (2m+d-1)} \, \varepsilon_t
		\]
		for some error controlling factor $\varepsilon_t\in (0, 1)$. Suppose further the constant $\tau_t$ in the defensive term satisfies
		\(
		\tau_t \leq \lerr{\phi_t - \surd q_t}^2.
		\)
		Then, the approximation error satisfies $\lerr{\surd \hat \pi_t - \surd q_t} \leq C_\mathrm{TT} \sqrt{2\, \Omega(\mathcal{X}\times\Uptheta\times \mathcal{X}) (2m+d-1)} \, \varepsilon_t$.
\end{proposition}

\begin{corollary} \label{coro:stt_thm}
		Suppose Proposition~\ref{prop:L2_approximation} and either bound of Assumption~\ref{assumption:propagation} hold. For $t > 0$, the total Hellinger distance between $ p_t $ and its approximation $ \hat p_t $ in Alg.~\ref{alg:stt} satisfies
		\[
			\dhd{\hat p_t}{p_t} \leq \frac{2 C_\mathrm{TT} \sqrt{\Omega(\mathcal{X}\times\Uptheta\times \mathcal{X}) (2m+d-1)}}{\sqrt{p(\by{}_{1:t})}} \sum_{k=1}^{t} C^{(t-k)/2} \varepsilon_k,
		\]
		where $C = \sup_t C_t^{(h)}$ with $h\in\{f,g\}$ and $p(\by{}_{1:t})$ is the evidence.
\end{corollary}

\section{Preconditioning methods}\label{sec:precondition}

The key step of Alg.~\ref{alg:stt} is to approximate the square root of the non-separable, unnormalized density $q_t$ by a TT decomposition $\phi_{q,t}$. In many applications, the temporally increasing data size and complex nonlinear interactions among parameters and states may concentrate posterior densities  to some submanifold, and hence lead to potentially high ranks in $\phi_{q,t}$. 
This can make the TT-based algorithms computationally demanding. Rather than directly approximating $q_t$, here we present a preconditioning framework to improve TT's approximation efficiency and discuss how to apply the resulting preconditioned approximations in our sequential estimation algorithms (Alg.~\ref{alg:stt}--\ref{alg:stt_smooth}). 

\subsection{General framework}

Our preconditioning procedure is guided by a (possibly unnormalized) bridging density $\rho_t(\bx_t, \bth, \bx_{\tminusone})$ that is easier to approximate than $q_t(\bx_t, \bth, \bx_{\tminusone})$. We introduce general reference random variables $(\bU_t, \bU_\theta, \bU_{\tminusone})$ with tensor-product normalized density $\eta(\bu_t, \bu_\theta, \bu_{\tminusone})$ $= \eta(\bu_t)\eta(\bu_\theta)\eta(\bu_{\tminusone})$,  where $(\bU_t, \bU_{\tminusone})$ take values in $\R^m$ and $\bU_\theta$ takes values $\R^d$. The preconditioning procedure has the following conceptual steps.

\begin{enumerate}[leftmargin=18pt]
    \item \textbf{Change of coordinates.} We first construct a \emph{preconditioning KR rearrangement} $\mathcal{T}_t$ such that 
    \begin{align}
        (\mathcal{T}_t)_\sharp  \, \rho_t(\bu_t, \bu_\theta, \bu_{\tminusone}) & = \rho_t\big(\mathcal{T}_t^{-1}(\bu_t, \bu_\theta, \bu_{\tminusone})\big) \big|\nabla \mathcal{T}_t^{-1}(\bu_t, \bu_\theta, \bu_{\tminusone})\big| \nonumber \\
        & \propto \eta(\bu_t, \bu_\theta, \bu_{\tminusone}).\label{eq:pushforward_1}
    \end{align}
    The map $\mathcal{T}_t$ defines a change of coordinates from $(\bx_t, \bth, \bx_{\tminusone})$ to $(\bu_t, \bu_\theta, \bu_{\tminusone})$. 
    \item \textbf{Preconditioning.} 
    Applying the identity in \eqref{eq:pushforward_1}, the pushforward of $q_t$ under $\mathcal{T}_t$ takes form 
    \begin{align}
        (\cT_t)_{\sharp} \,q_t(\bu_t, \bu_\theta, \bu_{\tminusone}) & = q_t \big(\mathcal{T}_t^{-1}(\bu_t, \bu_\theta, \bu_{\tminusone}) \big) \big|\nabla \mathcal{T}_t^{-1}(\bu_t, \bu_\theta, \bu_{\tminusone}) \big| \nonumber \\ 
        & \propto q_{\sharp,t}(\bu_t, \bu_\theta, \bu_{\tminusone}) ,\label{eq:pushforward_2}
    \end{align}
    where
    \begin{equation}
        q_{\sharp,t}(\bu_t, \bu_\theta, \bu_{\tminusone} ) := \frac{q_t\big(\cT_t^{-1}(\bu_t, \bu_\theta, \bu_{\tminusone}) \big) }{\rho_t\big(\cT_t^{-1}(\bu_t, \bu_\theta, \bu_{\tminusone})\big)} \eta(\bu_t, \bu_\theta, \bu_{\tminusone})
        \label{eq:pushforward_3}
    \end{equation}
    is a non-negative function that can be evaluated pointwise. The pushforward density $q_{\sharp,t}$ can be viewed as the reference density $\eta$ perturbed by the ratio $q_t/\rho_t$ in the transformed coordinates $(\bu_t, \bu_\theta, \bu_{\tminusone})$. With a suitable bridging density, the ratio $q_t/\rho_t$ is significantly less concentrated, and hence may be easier to approximate. See Fig.~\ref{fig:precond} for an illustration.

    \item \textbf{TT-approximation.} By approximating $\surd q_{\sharp,t}$ using a TT $\phi_{\sharp,t}$, i.e., 
    \begin{equation}
        \sqrt{ q_{\sharp,t}(\bu_t, \bu_\theta, \bu_{\tminusone} )} \approx \phi_{\sharp,t}(\bu_t, \bu_\theta, \bu_{\tminusone} ), 
    \end{equation}
    we can follow the squared-TT approximation outlined in Section \ref{sec:stt} to define an unnormalized approximate density 
    \begin{equation}\label{eq:second_stt}
        \hat \nu_{\sharp,t} (\bu_t, \bu_\theta, \bu_{\tminusone} ) := \phi_{\sharp,t}(\bu_t, \bu_\theta, \bu_{\tminusone} )^2 + \tau_{\sharp,t} \eta(\bu_t, \bu_\theta, \bu_{\tminusone} )
    \end{equation}
    where $\tau_{\sharp,t} \leq \lerr{\phi_{\sharp,t} - \surd q_{\sharp,t}}^2$. Applying Proposition \ref{prop:marginal} and denoting the normalized density by $\hat \mu_{\sharp,t} \propto \hat \nu_{\sharp,t}$, we obtain a KR rearrangement $\cS_t$ such that
    \[
        (\cS_t)_\sharp \, \hat \mu_{\sharp,t} (\bxi_t, \bxi_\theta, \bxi_{\tminusone}) = \mathrm{uniform}(\bxi_t, \bxi_\theta, \bxi_{\tminusone}).
    \]
    \item \textbf{Composition.} The above steps define a composite transformation $\cS_t \circ \cT_t$ that approximately pushes forward the normalized version of the density $q_t(\bx_t, \bth, \bx_{\tminusone})$ to a uniform density. Equivalently, we have 
    \begin{align}
        q_t(\bx_t, \bth, \bx_{\tminusone}) \approxprop & \; (\cS_t \circ \cT_t)^\sharp \mathrm{uniform}(\bx_t, \bth, \bx_{\tminusone}). 
    \end{align}
    Thus, the pullback of the uniform density under $\cS_t \circ \cT_t$ defines a normalized approximate posterior density, which takes the form
    \begin{align}
        \hat p(\bx_t, \bth, \bx_{\tminusone} | \by{}_{1:t}) & =  (\cS_t \circ \cT_t)^\sharp \mathrm{uniform}(\bx_t, \bth, \bx_{\tminusone}) \nonumber \\
        & = |\nabla (\cS_t \circ \cT_t) (\bx_t, \bth, \bx_{\tminusone})| \nonumber \\
        & \refover{eq:pushforward_1}{=} \, \frac{\hat \mu_{\sharp,t} \big(\cT_t(\bx_t, \bth, \bx_{\tminusone} )\big)} {\eta\big(\cT_t(\bx_t, \bth, \bx_{\tminusone} )\big)} \rho_t(\bx_t, \bth, \bx_{\tminusone}) \nonumber \\
        & = \frac1{\hat z_t} \hat \pi(\bx_t, \bth, \bx_{\tminusone} | \by{}_{1:t}) ,
    \end{align}
    where
    \begin{equation}\label{eq:precon_approx}
        \hat \pi(\bx_t, \bth, \bx_{\tminusone} | \by{}_{1:t}) = \frac{\hat \nu_{\sharp,t}\big(\cT_t(\bx_t, \bth, \bx_{\tminusone} )\big)}{\eta\big(\cT_t(\bx_t, \bth, \bx_{\tminusone} ) \big)}  \rho_t(\bx_t, \bth, \bx_{\tminusone})
    \end{equation}
    is the unnormalized approximate posterior density and 
    \begin{align*}
        \hat z_t  = \!\int\!  \hat \pi(\bx_t, \bth, \bx_{\tminusone} | \by{}_{1:t})\dd \bx_t \dd \bth \dd\bx_{\tminusone} = \!\int\! \hat \nu_{\sharp,t}(\bu_t, \bu_\theta, \bu_{\tminusone} )\dd \bu_t \dd \bu_\theta \dd\bu_{\tminusone},
    \end{align*}
    is the normalizing constant. The last equality follows from the change of coordinates defined by $\mathcal{T}_t$.
    \end{enumerate}
There are many ways to defining the bridging density $\rho_t$ and building the preconditioning map $\mathcal{T}_t$. In the following, we provide some examples of bridging densities relying on the particular structure of $q_t$, which is defined in \eqref{eq:sqrt_0}.

\begin{figure}[h]
    \centering
    \hfill
    \includegraphics{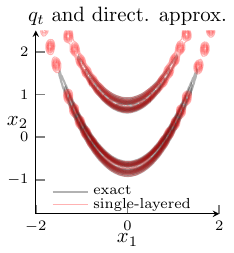}\hfill
    \includegraphics{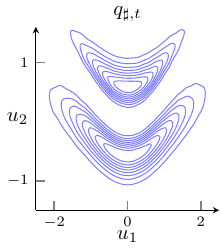}\hfill
    \includegraphics{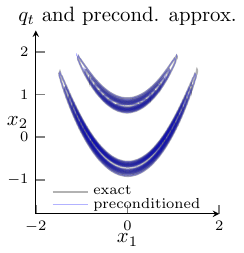}\hfill\vspace{-9pt}
    \caption{Left: approximation of $ q_t $ (gray contours) using a single layer of TT-based approximation (red contours), in which Fourier basis of order 30 and a TT rank 24 is used. Middle: preconditioned density using the tempering technique (cf. Section \ref{sec:nonlinear_precond}), in which the nonlinear preconditioning transform is defined by a TT with Fourier basis of order 30 and a rank 12. Right: approximation of $ q_t $ (gray contours) using the preconditioned approximation (blue contours), in which Fourier basis of order 30 and a TT rank 12 is used in the preconditioned approximation. }\label{fig:precond}
\end{figure}

\subsection{Gaussian bridging and linear preconditioning}\label{sec:linear_precond}

We consider a Gaussian approximation to the density $q_t$ as the bridging density, i.e., $\rho_t(\cdot):=\mathcal{N}(\cdot;\boldsymbol{\mu}_t, \boldsymbol{\Sigma}_t)$, where $\boldsymbol{\mu}_t \in \R^{2m + d}$ is the mean vector and $\boldsymbol{\Sigma}_t \in \R^{(2m + d)\times(2m+d)}$ is the covariance matrix. In each iteration, we can draw random variables from the previous marginal approximation $\hat \pi(\bth, \bx_{\tminusone} | \by{}_{1:\tminusone})$ and use one step of particle filter---e.g., a bootstrap filter---to estimate $(\boldsymbol{\mu}_t, \boldsymbol{\Sigma}_t)$ using particles. With $\rho_t(\cdot):=\mathcal{N}(\cdot;\boldsymbol{\mu}_t, \boldsymbol{\Sigma}_t)$, we compute the Cholesky factorization of $\boldsymbol{\Sigma}_t$ to obtain $\mL_t \mL_t{}^\top = \boldsymbol{\Sigma}_t$, where $\mL_t$ is lower-triangular. This defines a lower-triangular linear KR rearrangement $\mathcal{T}^l_t(\bx_t, \bth, \bx_{\tminusone}) =\mL_t^{-1} \big([\bx_t, \bth, \bx_{\tminusone}]^\top - \boldsymbol{\mu}_t \big)$ that transforms $(\bX_t, \bTh, \bX_{\tminusone}) \sim \rho_t$ into reference variables $(\bU_t, \bU_\theta, \bU_{\tminusone})$ following a tensor-product zero mean standard Gaussian density $\eta$. The upper-triangular linear KR rearrangement can also be derived by permuting the variables. 

\subsection{Tempering and nonlinear preconditioning}\label{sec:nonlinear_precond}

We employ the tempering idea, e.g., 
\cite{beskos2016convergence,gelman1998simulating,herbst2019tempered} and \cite{kantas2014sequential}, to build an unnormalized bridging density 
\begin{equation} \label{eq:bridging}
    \rho_t(\bx_t, \bth, \bx_{\tminusone}) = \hat \pi(\bth, \bx_{\tminusone} |\by_{1:t-1})^{\beta_\pi} \tr{t}^{\beta_f} \lk{t}^{\beta_g} ,
\end{equation}
with some constants $\beta_\pi, \beta_f, \beta_g \in [0,1]$. When $\beta_\pi = \beta_f= \beta_g = 1$, it recovers the target density $q_t$. By choosing appropriate constants $(\beta_\pi, \beta_f, \beta_g)$---for example, using the adaptation strategy discussed in \cite{beskos2016convergence} and \cite{kantas2014sequential}---to construct $\rho_t$ that is less concentrate than $q_t$ but retains some features of $q_t$. 
We can apply the squared approximation procedure to decompose $\surd \rho_t$ into a TT $\phi_{\rho,t}$, approximate $\rho_t$ by 
\begin{equation}\label{eq:pre_stt}
\hat \rho_t(\bx_t, \bth, \bx_{\tminusone}) := \phi_{\rho,t}(\bx_t, \bth, \bx_{\tminusone})^2 + \tau_{\rho,t} \lambda(\bx_t, \bth, \bx_{\tminusone}),
\end{equation}
with some $\tau_{\rho,t} > 0$, and derive the corresponding KR rearrangement $\mathcal{R}_t$ such that
\[
	(\mathcal{R}_t)_\sharp \hat \rho_t (\bxi_t, \bxi_\theta, \bxi_{\tminusone}) \propto \mathrm{uniform}(\bxi_t, \bxi_\theta, \bxi_{\tminusone}). 
\]
Given a diagonal map $\mathcal{D}$ such that 
\(
    \mathcal{D}_\sharp\, \eta(\bxi_t, \bxi_\theta, \bxi_{\tminusone}) = \mathrm{uniform}(\bxi_t, \bxi_\theta, \bxi_{\tminusone}),
\)
composing $\mathcal{D}^{-1}$ and $\mathcal{R}_t$, we obtain $\mathcal{T}_t = \mathcal{D}^{-1} \circ \mathcal{R}_t$ such that 
\begin{equation}
    (\mathcal{T}_t)_\sharp \hat \rho_t (\bu_t, \bu_\theta, \bu_{\tminusone}) \propto \eta(\bu_t, \bu_\theta, \bu_{\tminusone}). 
\end{equation}

In this construction, the preconditioned target density $q_{\sharp,t}$---which is originally defined in \eqref{eq:pushforward_3}---becomes
\begin{equation}
q_{\sharp,t}(\bu_t, \bu_\theta, \bu_{\tminusone} ) := \frac{q_t\big(\cT_t^{-1}(\bu_t, \bu_\theta, \bu_{\tminusone}) \big) }{\hat \rho_t\big(\cT_t^{-1}(\bu_t, \bu_\theta, \bu_{\tminusone})\big)} \eta(\bu_t, \bu_\theta, \bu_{\tminusone}),
\label{eq:pushforward_4}     
\end{equation}
where the exact bridging density $\rho_t$ is replaced with $\hat \rho_t$, which is the pushforward of the reference density under the preconditioning map $\mathcal{T}_t$. As a consequence, the unnormalized approximate density in \eqref{eq:precon_approx} becomes 
\begin{equation}\label{eq:precon_approx_2}
    \hat \pi(\bx_t, \bth, \bx_{\tminusone} | \by{}_{1:t}) = \frac{\hat \nu_{\sharp,t}\big(\cT_t(\bx_t, \bth, \bx_{\tminusone} )\big)}{\eta\big(\cT_t(\bx_t, \bth, \bx_{\tminusone} )\big)}  \hat \rho_t(\bx_t, \bth, \bx_{\tminusone}).
\end{equation}

\subsection{Marginalization and conditional maps} \label{sec:marginal_precond}

After building the preconditioned approximation, we need to integrate the approximate density $\hat \pi(\bx_t, \bth, \bx_{\tminusone} | \by{}_{1:t})$ over $\bx_{\tminusone}$, so that the marginal density 
\[
	\hat \pi(\bx_t, \bth | \by{}_{1:t}) = \int \hat \pi(\bx_t, \bth, \bx_{\tminusone} | \by{}_{1:t}) \dd \bx_{\tminusone}
\]
can be used in the next iteration of Alg.~\ref{alg:stt}. 
In addition, we also need to derive the lower and upper conditional KR rearrangements to be used in path estimation and particle filtering, respectively. We use the lower-triangular KR rearrangements $\mathcal{S}^l_t$ and $\mathcal{T}^l_t$, which respectively take the form
\[	
    \mathcal{S}^l_t(\bu_t, \bu_\theta, \bu_{\tminusone}) =  
    \begin{bmatrix*}[l]  \mathcal{S}^l_{t,t}\hspace{6pt}(\bu_{t}) \vspace{4pt}\\ \mathcal{S}^l_{t,\theta}\hspace{5pt}(\bu_\theta & \hspace{-11pt} | \bu_{t})  \vspace{4pt}\\ \mathcal{S}^l_{t,\tminusone}(\bu_{\tminusone} & \hspace{-11pt} | \bu_{t},\bu_\theta)  \end{bmatrix*} \quad \text{and} \quad
    \mathcal{T}^l_t(\bx_t, \bth, \bx_{\tminusone}) =  
    \begin{bmatrix*}[l]  \mathcal{T}^l_{t,t}\hspace{6pt}(\bx_{t}) \vspace{4pt}\\ \mathcal{T}^l_{t,\theta}\hspace{5pt}(\bth & \hspace{-11pt} | \bx_{t})  \vspace{4pt}\\ \mathcal{T}^l_{t,\tminusone}(\bx_{\tminusone} & \hspace{-11pt} | \bx_{t},\bth)  \end{bmatrix*} ,
\]
as well as nonlinear preconditioning defined above, to outline how to carry the marginalizations and conditional sampling.

Because the composition  $\mathcal{S}^l_t \circ \mathcal{T}^l_t$ is  also lower triangular, we can apply a similar derivation to that of Proposition \ref{prop:backward} to compute the marginal density $\hat \pi(\bx_t, \bth | \by{}_{1:t})$. The necessary steps for computing $\hat \pi(\bx_t, \bth | \by{}_{1:t})$ is integrated with the preconditioned density approximation procedure, and thus we summarize them together in Alg.~\ref{alg:precond}, which replace steps (b) and (c) of the sequential estimation algorithm (Alg.~\ref{alg:stt}).

\begin{myalg}{Preconditioned replacements for steps (b) and (c) of the sequential estimation algorithm (Alg.~\ref{alg:stt}).}\label{alg:precond}
    \begin{enumerate}[wide=0pt,leftmargin=18pt,labelsep=9pt]
        \item[(b.1)\hspace{-3pt}] Approximate the bridging density $\rho_t(\bx_t, \bth, \bx_{\tminusone})$ by the TT-based approximate density $\hat \rho_t(\bx_t, \bth, \bx_{\tminusone})$ defined in \eqref{eq:pre_stt}.

        \item[(b.2)\hspace{-3pt}] Integrate $\hat \rho_t(\bx_t, \bth, \bx_{\tminusone})$ from the right variable $x_{\tminusone,m}$ to the left variable $x_{t,1}$ using Proposition \ref{prop:marginal} to define lower-triangular preconditioning map $\mathcal{T}^l_t$, the lower conditional map $\mathcal{T}^l_{t,\tminusone}(\bx_{\tminusone} | \bx_{t},\bth)$, and the marginal density $\hat \rho_t(\bx_t, \bth)$.

        \item[(b.3)\hspace{-3pt}] Approximate the pushforward density $q_{\sharp,t}(\bu_t, \bu_\theta, \bu_{\tminusone})$ by the TT-based approximation $\hat \nu_{\sharp,t}(\bu_t, \bu_\theta, \bu_{\tminusone})$ defined in \eqref{eq:second_stt}.

        \item[(c.1)\hspace{-3pt}] Integrate the last block of  $\hat \nu_{\sharp,t}(\bu_t, \bu_\theta, \bu_{\tminusone})$ from the right variable $u_{\tminusone,m}$ to the left variable $u_{\tminusone,1}$ using Proposition \ref{prop:marginal} to define the marginal density $\hat \nu_{\sharp,t}(\bu_t, \bu_\theta)$ and the lower conditional map $\mathcal{S}^l_{t,\tminusone}(\bu_{\tminusone} | \bu_{t},\bu_\theta)$. 
        \item[(c.2)\hspace{-3pt}] Using the lower-triangular transformation $(\bu_t, \bu_\theta) = (\mathcal{T}^l_{t,t}(\bx_{t}), \mathcal{T}^l_{t,\theta}(\bth | \bx_{t}) )$, we have the marginal density 
        \[
        \hat \pi(\bx_t, \bth | \by{}_{1:t}) = \frac{\hat \nu_{\sharp,t}\big(\mathcal{T}^l_{t,t}(\bx_{t}), \mathcal{T}^l_{t,\theta}(\bth | \bx_{t}) \big)}{\eta\big(\mathcal{T}^l_{t,t}(\bx_{t}),\mathcal{T}^l_{t,\theta}(\bth | \bx_{t}) \big)}  \hat \rho_t(\bx_t, \bth),
        \]
        which will be used in the next iteration of the sequential estimation.
    \end{enumerate}
\end{myalg}

As a byproduct, steps (b.2) and (c.1) of Alg.~\ref{alg:precond} also define the last block of the composite map $\mathcal{S}^l_t \circ \mathcal{T}^l_t$, which can be expressed as
\begin{equation}
    \mathcal{F}^l_{t,\tminusone}(\bx_{\tminusone} | \bx_t, \bth) :=\mathcal{S}^l_{t,\tminusone}\big(\mathcal{T}^l_{t,\tminusone}(\bx_{\tminusone} | \bx_{t},\bth) \big| \mathcal{T}^l_{t,t}(\bx_{t}),\mathcal{T}^l_{t,\theta}(\bth|\bx_t)\big).
\end{equation}
This precisely gives the lower conditional map for the path estimation (Alg.~\ref{alg:stt_smooth}). 


\section{Numerical results}\label{sec:numerics}

We provide several numerical examples to demonstrate the efficiency of our TT-based methods. 
These include a linear Kalman filter controlled by unknown parameters, in which the posterior density of the parameters has an analytical form; 
a stochastic volatility model (cf. Example~\ref{eg:sv}) commonly used in the literature for benchmarking sequential algorithms; a high-dimensional compartmental susceptible-infectious-removed model following the Austrian state adjacency map; and a dynamical system modelling the interaction of a predator-prey system. 
We also compare our method to SMC$^2$ when it is applicable. 
For all the numerical examples reported here, we compute the $25\%, 50\%, 75\%$ quantiles of the effective sample sizes using $40$ repeated experiments from smoothing samples obtained in Alg.~\ref{alg:stt_smooth}.

\subsection{Linear Kalman filter with unknown parameters}\label{sec:kalman}

\textbf{Setup.} Our first example considers a linear Kalman filter with unknown parameters, in which the state process and the observation process take the form:
\begin{equation} \label{eq:kalman filter}
	\left\{ 
		\begin{array}{rl}
			\bX_t - \mu \; = \! & b \left(\bX_{\tminusone}-\mu\right) + a \varepsilon^{(x)}_{t}\\
			\bY_t \; = \! & \mC \bX_t + d \varepsilon^{(y)}_{t}
		\end{array}
	\right. ,
\end{equation}
where $\varepsilon^{(x)}_t$ and $\varepsilon^{(y)}_t$ follow independent Gaussian distributions $\mathcal{N}(0,\mI_m)$ and $\mathcal{N}(0, \mI_n)$, and $\mC \in \R^{n \times m}$ is the observation matrix. Here $\mI_{m}$ represents an $m\times m$ identity matrix. The prior density of the initial state $ \bX_0 $ is given by $p(\bx_0|\mu) := \mathcal{N}(\bx_0;\mu \mathbf{1}, \mI_{m})$, where $\mathbf{1}$ is the $m$-dimensional vector filled with ones. The parameters controlling the model are collected as $ \bth = (\mu,a,b, d)$.

Conditioned on the parameters, the joint state filtering density $ p(\bx_{0:t} | \bth, \by_{1:t}) $ follows a multivariable Gaussian distribution, which can be derived from the classical Kalman filter. Integrating over the states, we obtain an explicit form of the posterior parameter density  $ p( \bth | \by_{1:t}) = \int  p(\bx_{0:t} , \bth | \by_{1:t}) \dd \bx_{0:t} $. 
Although the posterior parameter density cannot be directly sampled, we can evaluate the density pointwisely for any given parameter $\bth$, and thus we can use the model in \eqref{eq:kalman filter} to benchmark the performance of various estimation algorithms---for example, this enables us to estimate the Hellinger distance between the exact posterior parameter density $p(\bth| \by_{1:t})$ and its approximation $\hat p( \bth| \by_{1:t})$ obtained by Alg.~\ref{alg:stt}. 
For the sake of completeness, we include the derivation of the posterior parameter density  $ p( \bth | \by_{1:t}) $ in Appendix \ref{appendix:kalman}.

In our numerical experiment, we take $ m $ and $ n $ to be three and let the total number of time steps $ T $ be 50. Additionally, we set $\mu=0$ and set the observation matrix $ \mC $ to be a prescribed random value. We impose the condition $a^{2}+b^{2}=1$ such that the state process $ \bX_t $ is stationary. 
Thus, the parameters to be estimated are reduced to $ \bth = (a, d) $. We impose a uniform prior density $p(\bth) := \mathrm{uniform}(\bth; [0.4,1]^2)$ on the parameters, and generate the synthetic data $\bY_t$ using $a=0.8$ and $d=0.5$ in the numerical experiments. When building TT approximations, we transform the parameters to an unbounded domain using the inverse distribution function of the standard Gaussian distribution to facilitate the linear preconditioning techniques in Sec.~\ref{sec:linear_precond}.

\textbf{Comparison of Alg.~\ref{alg:basic} and \ref{alg:stt}.} We start with the comparison between Alg.~\ref{alg:basic} and \ref{alg:stt}. For both algorithms, we use 5 alternating least square (ALS) iterations to construct TT decompositions. Note that using two ALS iterations is sufficient in most of the situations presented here. We use a rather large number of ALS iterations to eliminate possible error sources in our benchmarks. The TT decomposition used here employs a piecewise Lagrange basis function defined by four subintervals and polynomials with order eight. We consider maximum TT ranks $r \in \{10, 20, 30\}$ in this comparison.

Following the discussion of Section~\ref{sec:error}, we use the relative $L^1$ error of posterior parameter densities, $\|\hat \pi(\bth|\by_{1:t}) - \pi(\bth|\by_{1:t})\|_{L^1}/\|\pi(\bth|\by_{1:t})\|_{L^1}$, to benchmark the accuracy of these TT-based algorithms. Note that the denominator $\|\pi(\bth|\by_{1:t})\|_{L^1}$ is also the normalizing constant of the posterior density. Fig.~\ref{fig:alg12} shows the change of the relative $L^1$ error over time. 
For both algorithms, we observe that the error accumulations behave similarly, and relative errors reduce with increasing TT ranks. For all TT ranks, Alg.~\ref{alg:stt} is more accurate than Alg.~\ref{alg:basic}. In the other numerical experiments, we only demonstrate the performance of Alg.~\ref{alg:stt} and its accompanying sampling algorithms.

\begin{figure}[h]
	\centering
	\includegraphics{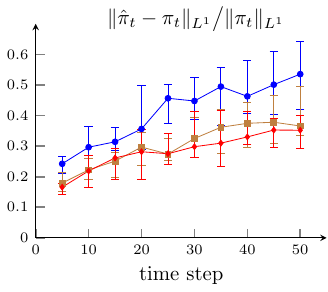}
	\includegraphics{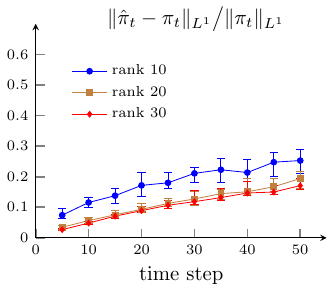}\vspace{-9pt}
	\caption{Linear Kalman filter with unknown parameters. The relative $ L^1 $ error of approximations of posterior parameter densities built by Alg.~\ref{alg:basic} and \ref{alg:stt}.}
	\label{fig:alg12}
\end{figure}

\textbf{Demonstration of Alg.~\ref{alg:stt} and debiasing.} 
Since the non-negativity-preserving Alg.~\ref{alg:stt} is the workhorse of this paper, here we thoroughly demonstrate its accuracy and the sampling performance of the accompanying path estimation algorithm (Alg.~\ref{alg:stt_smooth}) with various algorithmic settings. In all experiments, we use five ALS iterations to construct TT decompositions and piecewise basis functions defined on four subintervals with Lagrange polynomials. We denote the number of degrees of freedom of the basis functions by $\ell$.

\begin{figure}[h]
	\centering\hfill
	\includegraphics{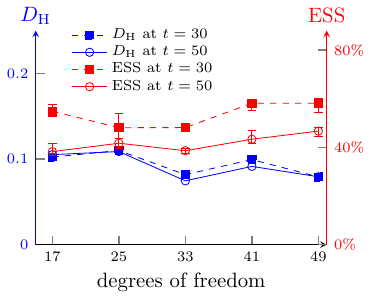}\hfill
	\includegraphics{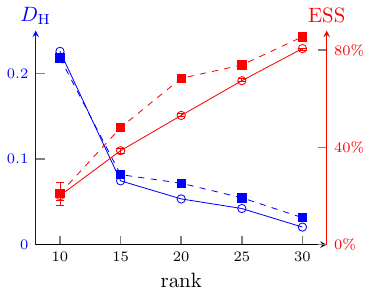}\hfill\vspace{-9pt}
	\caption{Linear Kalman filter with unknown parameters. The Hellinger distances (blue lines) between the theoretical posterior parameter density and its TT approximation, and the ESS (red lines) for the joint posterior of the states and the parameters at time $ t \in \{30, 50\}$. Left: changing the number of degrees of freedom of the basis functions $\ell$ with a fixed maximum TT rank $r = 15$. Right: changing the maximum TT rank $r$ with a fixed number of degrees of freedom of the basis functions $\ell = 33$.}
	\label{fig:kalman_rank}
\end{figure}

In the first set of numerical experiments, we vary the number of degrees of freedom of the piecewise basis functions $ \ell $ and the maximum TT rank $ r $ to investigate their impacts on the accuracy of TT-based methods. We first fix the maximum TT rank $r$ to be $15$ and vary $\ell = \{17, 25, 33, 41, 49\}$ by fixing the number of subintervals to be four and increasing the order of the Lagrange polynomials. For each $\ell$, we run Alg.~\ref{alg:stt} for $ t $ up to 50 and apply the path estimation (Alg.~\ref{alg:stt_smooth}) using $N=1000$ sample paths at $t \in \{30, 50\}$.  
For each of the approximate posterior parameter densities, we report the Hellinger distance and the effective sample size (ESS)---for the joint density $p(\bth, \bx_{0:t} | \by{}_{1:t})$---at $t \in \{30, 50\}$ in the left plot of Fig.~\ref{fig:kalman_rank}. 
We observe that increasing the number of degrees of freedom of the basis functions only marginally improves the accuracy. 
Then we fix the number of degrees of freedom $\ell$ to be $33$ with order-eight Lagrange polynomials on four subintervals, and vary the maximum TT rank $r \in \{10, 15, 20, 25, 30\}$. 
The resulting Hellinger distances and ESSs are shown in the right plot of Fig.~\ref{fig:kalman_rank}. In this case, we observe a significant error reduction with increasing TT ranks.

\begin{figure}[h]
	\centering\hfill
	\includegraphics[scale = 1.1]{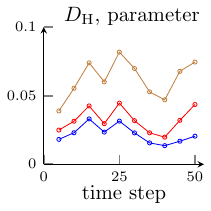}\hfill
	\includegraphics[scale = 1.1]{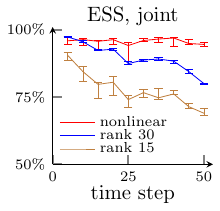}\hfill
	\includegraphics[scale = 1.1]{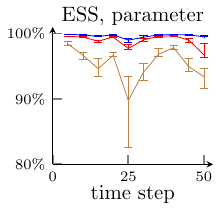}\hfill\vspace{-9pt}
	\caption{Linear Kalman filter with unknown parameters. Left: the Hellinger distance between the theoretical posterior parameter density $ p(\bth|\by_{1:t})$ and its TT approximations versus time. Middle: the ESS of the path estimation (Alg.~\ref{alg:stt_smooth}) for sampling the joint posterior $p(\bth, \bx_{0:t}|\by_{1:t})$ versus time. Right: the ESS for sampling the posterior parameter density $p(\bth|\by_{1:t})$ versus time.}
	\label{fig:kalman}
\end{figure}

In the second set of numerical experiments, we demonstrate the impact of preconditioning techniques. 
Here we consider two approximation ansatzes with $ (\ell, r) = (33, 15) $ and $(\ell, r) =  (33, 30) $ using linear preconditioning (cf. Section~\ref{sec:linear_precond}) and another ansatz $ (\ell, r) = (33, 15) $ using nonlinear preconditioning (cf. Section~\ref{sec:nonlinear_precond}). 
For nonlinear preconditioning, we use a standard multivariate Gaussian for the reference density $ \eta $ and $ \beta_\pi = \beta_f = \beta_g = 0.4 $ to construct the bridging density. For each $t \in \{5, 10, \ldots, 50\}$, we report the Hellinger distance, the ESS of the path estimation (Alg.~\ref{alg:stt_smooth}) for the joint density, and the ESS of the approximate posterior parameter density. The results are shown in Fig.~\ref{fig:kalman}. 
In the linear preconditioning case, we observe that the approximation ansatz $ (\ell, r) =  (33, 30) $ outperforms the approximation ansatz $ (\ell, r) =  (33, 15) $ at all time steps, which is expected. 
We also observe that the performance of the nonlinear preconditioning technique using the approximation ansatz $(\ell, r) = (33, 15)$ is comparable with that of linear preconditioning with a higher-rank approximation ansatz $(\ell, r) = (33, 30)$. For both $(\ell, r) = (33, 15)$ using nonlinear preconditioning and $(\ell, r) = (33, 30)$ using linear preconditioning, the Hellinger distance remains below $0.05$ for all time steps, while the ESSs for the joint densities and the marginal parameter densities are respectively around 80\% and 98\% after 50 time steps. These results provide strong evidence of the reliability of the TT-based sequential estimation methods. 

\begin{figure}[t]
	\centering
	\includegraphics[valign=t,scale=0.99]{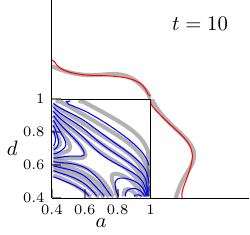}
	\includegraphics[valign=t,scale=0.99]{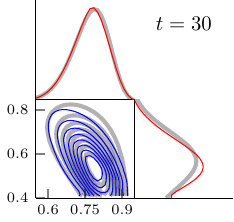}
	\includegraphics[valign=t,scale=0.99]{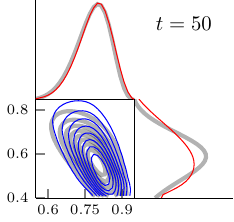}
	\\
	\includegraphics[valign=t,scale=0.99]{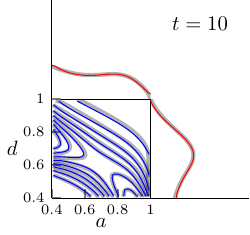}
	\includegraphics[valign=t,scale=0.99]{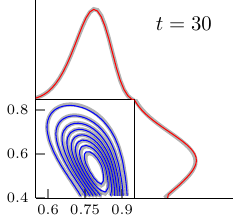}
	\includegraphics[valign=t,scale=0.99]{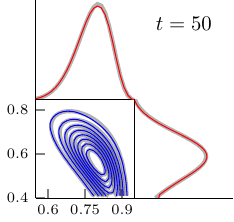}
	\\
	\includegraphics[valign=t,scale=0.99]{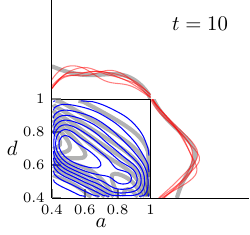}
	\includegraphics[valign=t,scale=0.99]{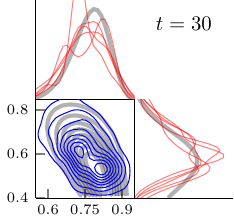}
	\includegraphics[valign=t,scale=0.99]{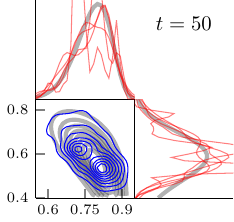}
	\\
	\includegraphics[valign=t,scale=0.99]{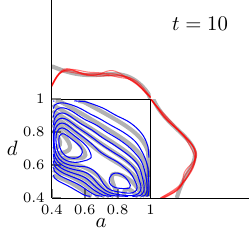}
	\includegraphics[valign=t,scale=0.99]{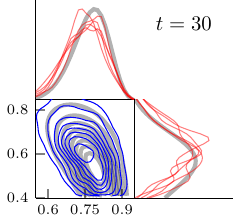}
	\includegraphics[valign=t,scale=0.99]{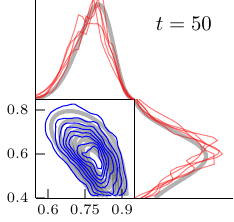}
	\caption{Linear Kalman filter with unknown parameters. The contours of posterior densities of $\bth=(a,d)$ at different time. From the top row to the bottom row, blue lines  are results obtained by Alg.~\ref{alg:stt} with $ (\ell,r) = (33,15) $,  Alg.~\ref{alg:stt} with $ (\ell,r) = (33,30) $, SMC$^2$ with 500 particles, and SMC$^2$ with 5000 particles, respectively. The marginal densities are shown by red curves on each side. The analytical solutions are plotted as thick gray curves at the background. For SMC$^2$, the marginal densities are results obtained from 5 batches, whereas the contours are results obtained by putting all batches together. }
	\label{fig:kalman_50}
\end{figure}

\textbf{Comparison with SMC$^2$.} We compare Alg.~\ref{alg:stt} with $(\ell,r)=(33,30)$ and $(\ell,r)=(33,15)$ to the SMC$^2$ method. We consider two setups for the SMC$^2$ method. One uses a particle size $ N_\theta = 500 $ for the parameters and another one uses an increased particle size $ N_\theta = 5000 $. In all numerical experiments, the particle size for the states is initially set to $ N_x = 100 $ and adaptively increased so that the acceptance rate of the parameter particles remains a relatively high level \citep{chopin2013smc2}. In our setup, the computational cost of running SMC$^2$ with $(N_\theta,N_x) = (500,100)$ is about the same as running Alg.~\ref{alg:stt} with $(\ell,r)=(33,30)$. Both take roughly 10 minutes using a 3.20 GHz Intel i7-8700 CPU. 
In Fig.~\ref{fig:kalman_50}, we present contours of the posterior parameter densities and their one-dimensional marginals at $ t \in \{10, 30, 50\}$ estimated using various algorithms. 
For SMC$^2$, we follow the setup of \cite{chopin2013smc2} to run five independent batches of particles, and then apply the kernel density estimation (KDE) method \citep{silverman1986density,peter1985kernel} to estimate the joint density using the union of the five batches and the one-dimensional parameter marginal density of each batch. 
Both TT-based setups perform better than the SMC$^2$-based setups.
The approximate posterior parameter densities from TT with $(\ell,r)=(33,30)$ accurately capture the true densities at all time steps, while those from SMC$^2$ do not have a consistent result across different batches, especially with $(N_\theta,N_x) = (500,100)$.

\subsection{Stochastic volatility}\label{sec:volatility}

Our second numerical example uses the stochastic volatility model defined in Example \ref{eg:sv}. 
We test Alg.~\ref{alg:stt} and Alg.~\ref{alg:stt_smooth} on synthetic data sets and a real-world data set taken from the S\&P 500 index.

\textbf{Synthetic data.} We fix $ \sigma = 1 $ and aim to estimate parameters $\bth=(\gamma,\beta)$. 
We consider the sequential estimation problems for $T=1000$ steps. Here $\gamma=0.6$ and $\beta=0.4$ are used to generate synthetic data. 
We set the prior density of the initial state to be $p(\bx|\bth) := \mathcal N(\bx;0, 1/(1 -\gamma^2))$ and the prior density of the parameters to be $p(\bth) := \mathrm{uniform}(\bth; [0.1,0.9]^2)$. 
Since the analytical marginal densities are no longer available in this example, we can only evaluate the $ 1002 $-dimensional joint posterior density of the parameters and all the states $(\bth, \bx_{0:1000})$, and then estimate the corresponding joint ESS as a measure of the approximation errors.

\begin{figure}[!ht]
\centering\hfill
\includegraphics{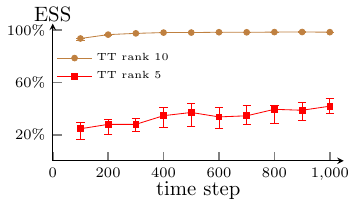}\hfill
\includegraphics{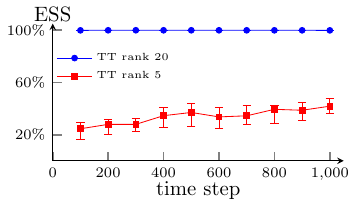}\hfill\vspace{-9pt}
\caption{Stochastic volatility model with synthetic data. The ESS for the joint posterior density versus time, computed using different maximum  TT ranks $r \in \{5, 10, 20\}$.}
\label{fig:svess}
\end{figure}

\begin{figure}[!ht]
\centering
\includegraphics[scale=0.93]{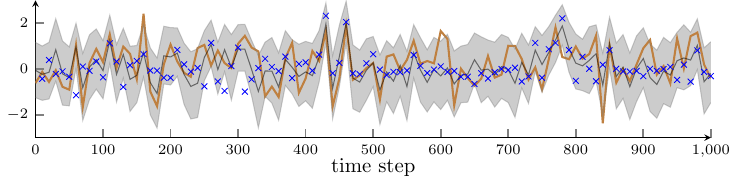}\hspace{-24pt}\vspace{-9pt}
\caption{Stochastic volatility model with synthetic data. The thick brown line represents the path of the true state and crosses are the observations. Estimated using the approximate posterior state path density $ \hat p(\bx_{0:1000} |\by_{1:1000}) $ defined in \eqref{eq:smooting_density} with a maximum TT rank $ r = 10 $, the black line and the shaded region represent the median path and the credible interval bounded between 5\% and 95\% percentiles, respectively.\label{fig:svstate}}
\end{figure}

\begin{figure}[!ht]
\centering
\includegraphics[scale = .9]{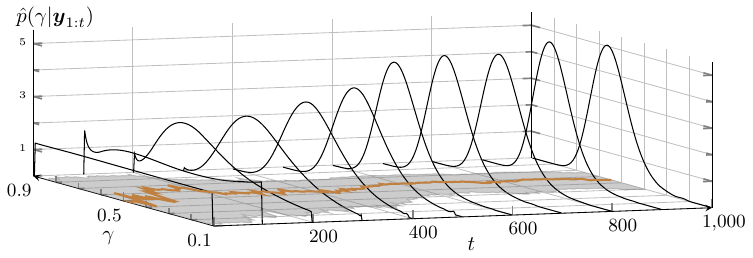}\hspace{-24pt}\vspace{-9pt}
\caption{Stochastic volatility model with synthetic data. The brown line and the shaded region in the horizontal $(t, \gamma)$-plane show the evolution of the median and the credible interval of the posterior estimates of $ \gamma $ bounded between 5\% and 95\% percentiles. The density profiles in the vertical axis show the approximate posterior parameter density $ \hat p(\gamma |\by_{1:t})$ at different time. \label{fig:sv3d}} 
\end{figure}

\begin{figure}[!ht]
\centering
\includegraphics[valign=t,scale=0.9]{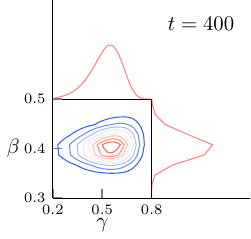}
\includegraphics[valign=t,scale=0.9]{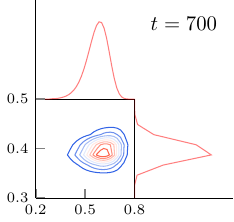}
\includegraphics[valign=t,scale=0.9]{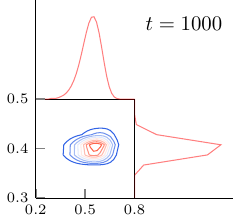}\vspace{-9pt}
\\
\includegraphics[valign=t,scale=0.9]{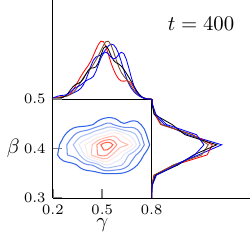}
\includegraphics[valign=t,scale=0.9]{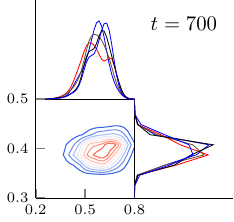}
\includegraphics[valign=t,scale=0.9]{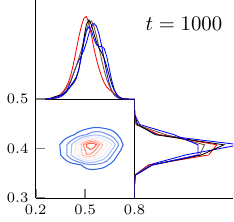}\vspace{-9pt}
\caption{Stochastic volatility model with synthetic data. The contours of posterior densities of $\bth=(\gamma,\beta)$ at different time. The top and bottom rows show results obtained by Alg.~\ref{alg:stt} and SMC$^2$ respectively. The marginal densities are shown on each side. For SMC$^2$, the marginal densities shown are results obtained from $5$ batches with 1000 particles in each batch, whereas the contours are results obtained by putting all batches together.} \label{fig:svsmc}\label{fig:svftt}
\vspace{-15pt}
\end{figure}

Similarly to the linear Kalman filter case, we use the piecewise Lagrange polynomial with the number of degrees of freedom $ \ell $ to be $  33 $ as the basis function, linear preconditioning, and five ALS iterations for building TT decompositions in Alg.~\ref{alg:stt}.
The accompanying path estimation algorithm (Alg.~\ref{alg:stt_smooth}) is used to generate weighted samples from the resulting TT-based approximations. We set the maximum TT rank to be $r\in\{5, 10, 20\}$ and show the change of ESS over time in Fig.~\ref{fig:svess}. 
We observe that with $r=5$, the TT-based approximation yields a reliable result with ESS above $ 20\% $ even after 1000 steps, while with $r =10$ it provides nearly $ 100\% $ ESS at almost all time steps. 
Fig.~\ref{fig:svstate} presents the trajectories of the states estimated from the approximate posterior state path density  $ \hat p( \bx_{0:1000} |\by_{1:1000}) $ using rank $ r = 10 $. 
We observe the true states are well followed by the trajectories estimated from the approximations. 
In Fig.~\ref{fig:sv3d}, we also show the approximate filtering densities of the parameter $ \gamma $ from the TT-based approximations, i.e., $ \hat p(\gamma |\by_{1:t}) $ over time. 
We observe that the densities get concentrated around the true value $ \gamma = 0.6 $ (for generating the data) with an increasing amount of data observed over time.

Next, we compare the TT-based method with $ r = 10 $ to SMC$^2$ with 5 independent batches, each batch with $ N_\theta = 1000 $ parameter particles and initially $ N_x = 100 $ state particles.
Fig.~\ref{fig:svftt} shows the contours and the marginals of the posterior parameter densities at $t \in \{400, 700, 1000\}$ obtained by the TT-based method and SMC$^2$. Both methods yield densities that concentrate around the true values of the parameters used to generate the data, i.e., $ (\gamma,\beta)=(0.6, 0.4) $.  However, the computation time of the TT-based method is about one hour using a 3.20 GHz Intel i7-8700 CPU, while that for one batch of SMC$^2$ method is over eight hours.

\textbf{S\&P 500 index.} We then apply our algorithms to the daily returns of the S\&P 500 index.
The historical data of the stock prices are obtained from Yahoo Finance\footnote{https://au.finance.yahoo.com/}.
The data set contains 1009 observations from 31st of December 2019 to 29th of December 2023, excluding weekends and holidays.
We compute the continuously compounded daily returns $ \{\by_t\}_{t=1}^{T} $ from the stock prices, where $ T = 1008 $.

We aim to estimate the states $\{\bX_t\}_{t=0}^T$  and all the three parameters $ \bTh = (\gamma, \sigma, \beta) $ from the observed daily returns $ \{\by_t\}_{t=1}^{T} $ of the S\&P 500 index.
Following the common practice in econometrics \citep{zhang2008box, yu2006class}, we set the priors for $ \bTh $ and $ \bX_0 $ as:
\begin{itemize}
	\item $ (\gamma + 1) /2 \sim \text{Beta}(\omega_1, \omega_2) $, with $ \omega_1 =20, \omega_2 = 1.5 $.
	\item $ \sigma^2 \sim \text{IG}(\zeta/2, S_\sigma /2) $, with $ \zeta = 2, S_\sigma = 0.01 $, where IG stands for the inverse Gamma distribution.
	\item $ \log(\beta) | \sigma \sim \mathcal N (\beta_0, \sigma^2 / q_0) $, with $  \beta_0 = 0 , q_0 = 0.8$.
	\item $ \bX_0  | \gamma,  \sigma \sim \mathcal N\big(0, \sigma^2 / (1- \gamma^2)\big) $.
\end{itemize}

When {building} TT approximations, we transform the parameters and the states in the stochastic volatility model to
\begin{align*}
	\bTh' &= \big(\Phi^{-1}(\gamma), \log(\sigma), \log(\beta)/\sigma\big) ,\\
	\bX_t' &= \bX_t / \sigma,
\end{align*}
where $ \Phi $ denotes the distribution function of {the} standard Gaussian random variable.
The transformed parameters and states take values in an unbounded domain, so that the linear preconditioning techniques developed in Sec.~\ref{sec:linear_precond} can be applied.
After obtaining the approximate densities of $ (\bTh', \bX_t') $ using our TT-based methods, we transform them back to the original variables $ (\bTh, \bX_t) $ to present the estimation results.

\begin{figure}[!ht]
	\centering
	\includegraphics{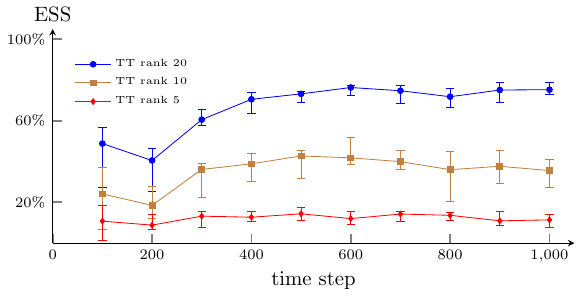}\vspace{-9pt}
	\caption{Stochastic volatility model with the S\&P500 returns. The ESS for the joint posterior density at different time, computed using maximum  TT ranks $r \in \{5, 10, 20\}$.}
	\label{fig:sp500ess}
\end{figure}

\begin{figure}[!ht]
	\centering
	\hspace{-5pt}\includegraphics[scale=0.93]{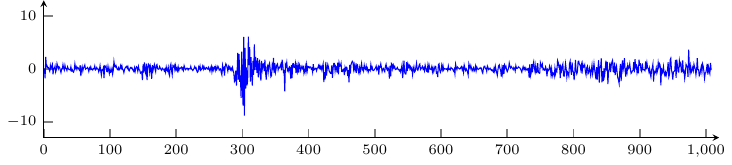}
	\includegraphics[scale=0.93]{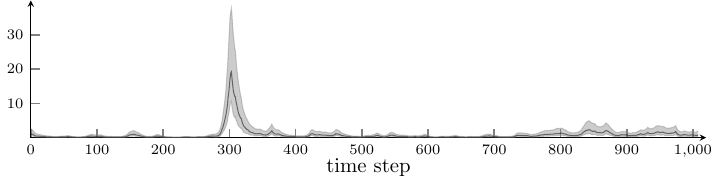}
	\caption{Stochastic volatility model with the S\&P500 returns. The observed returns (top) and the estimated trajectories of squared volatilities (bottom). In the bottom plot, the black line and the shaded region represent the median path and the credible interval bounded between 5\% and 95\% percentiles, respectively. These estimation results are obtained  using the approximate posterior state path density $ \hat p(\bx_{0:1008} |\by_{1:1008}) $ defined in \eqref{eq:smooting_density} with a maximum TT rank $ r = 20 $.\label{fig:sp500state}}
\end{figure}
\begin{figure}[!ht]
	\centering
	\includegraphics[valign=t,scale=0.9]{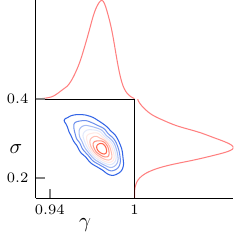}
	\includegraphics[valign=t,scale=0.9]{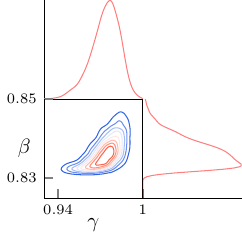}
	\includegraphics[valign=t,scale=0.9]{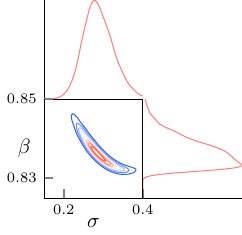}\vspace{-9pt}
	\caption{Stochastic volatility model with the S\&P500 returns. The contours of posterior densities of $\bth=(\gamma, \sigma, \beta)$ at $ T = 1008 $, estimated by Alg.~\ref{alg:stt}. The marginal densities are shown on each side.}\label{fig:sp500para}
	\vspace{-15pt}
\end{figure}
We set the maximum TT rank to be $r\in\{5, 10, 20\}$ and maintain the other TT configurations used in the synthetic case.
Fig.~\ref{fig:sp500ess} shows the change of ESS over time.
The ESS clearly increases with the TT rank, reaching around $ 70\% $ after 1008 steps for $ r = 20 $.
Fig.~\ref{fig:sp500state} shows the observed returns of the S\&P 500 index during the selected period and the trajectories of the squared volatilities estimated from the approximate posterior state path density  $ \hat p( \bx_{0:1008} |\by_{1:1008}) $ using rank $ r = 20 $. 
We observe that the highly volatile returns around $ t = 300 $ are well captured by the peak of the estimated volatilities, and the increasing variability of the returns after $ t = 800 $ is also captured by the increase in the estimated volatilities in the corresponding timeframe. 
In Fig.~\ref{fig:sp500para}, we also present the approximate posterior densities of the parameters $ \theta = \{\gamma, \sigma, \beta\} $ from the TT-based approximations $ \hat p(\theta |\by_{1:1008}) $. 
We observe a highly nonlinear pattern between $ \beta $ and the other two parameters.
These results demonstrate that our TT-based sequential estimation methods are effective for real-world data in this case.

\subsection{Susceptible-infectious-removed model}
We then apply our TT-based methods to a high-dimensional compartmental susceptible-infectious-removed (SIR) model to demonstrate its dimension scalability.
The SIR model describes the numbers of susceptible, infectious and removed individuals across $ J \in \mathbb N $ spatially dependent demographic compartments, denoted by $ S_j(t) $, $ I_j(t) $ and $ R_j(t) $, respectively.
The interaction between them are modelled by the following coupled ODEs:
\begin{equation}
	\left\{
		\begin{array}{rcl}
			\displaystyle \frac{\dd S_j}{\dd t} &=& - \kappa_j S_j I_j + \frac12 \sum_{i\in \mathcal I_j} (S_i - S_j) \vspace{8pt} \\
			\displaystyle \frac{\dd I_j}{\dd t} &=& \kappa_j S_j I_j - \nu_j I_j + \frac12 \sum_{i\in \mathcal I_j} (I_i - I_j) \vspace{8pt} \\
			\displaystyle \frac{\dd R_j}{\dd t} &=& \nu_j I_j + \frac12 \sum_{i\in \mathcal I_j} (R_i - R_j) 
		\end{array}
		\right., \label{eq:SIRode}
\end{equation}
where $ \kappa_j $ and $ \nu_j $ are the parameters representing the infection and recovery rates, and $ \mathcal I_j $ is the index set containing all neighbors of the $ j $-th compartment.
We fix the parameters $ \kappa_j = 0.1 $ and $ \nu_j = 18 $ for $ j = 1, 2, \ldots, J $, and focus on the inference for $ S_j $, $ I_j $ and $ R_j $. 

In the SIR model, it is sufficient to infer any two of the three random processes in each compartment, and the third process is known given the other two.
Here we infer the processes $ S_j $ and $ I_j $, and set the state to be
\[
	\bx = (S_1, I_1, \ldots, S_J, I_J) \in \mathbb R^{2J}.
\]
Then, the dynamic for the state follows
\[
	\displaystyle \frac{d\bx}{dt} = G(\bx), 
\]
where $G(\bx)$ is defined by the right-hand side of the first two equations in \eqref{eq:SIRode}.
To formulate the state-space model, we discretize the dynamical system at discrete time points $t = k \Delta t$, where $\Delta t = 0.02$, and perturb the states using Gaussian noises $\varepsilon^{(x)}_k \sim \mathcal N(\mathbf{0},  \mI_{2J}) $. 
Thus, conditioned on the state $\bX_{k\text{-}1}$ at time $(k-1)\Delta t$, the state $\bX_k$ at time $k \Delta t$ is given by
\[
\bX_k = \bX_{k\text{-}1} + \int_{(k\text{-}1) \Delta t}^{k \Delta t} G \big(\bx(t) \big) \dd t + \varepsilon^{(x)}_k.
\]
We use the explicit fourth-order Runge-Kutta formula with constant time step sizes $ \Delta t = 0.005 $ to numerically solve the above time integration problem. 
For the observation process, we assume only $ I_j(t) $ is observable at each time step, and the observations are perturbed by Gaussian noises $ \varepsilon^{(y)}_k \sim \mathcal N(\mathbf{0}, 100 \ \mI_{J})$. 
This defines the observation process
\[
	\bY_{k, j} = \bX_{k, 2j} + \varepsilon^{(y)}_{k, j}, \quad k = 1,2,\ldots, T, \quad j = 1,2,\ldots, J.
\]

\begin{figure}[!ht]
	\centering
	\includegraphics{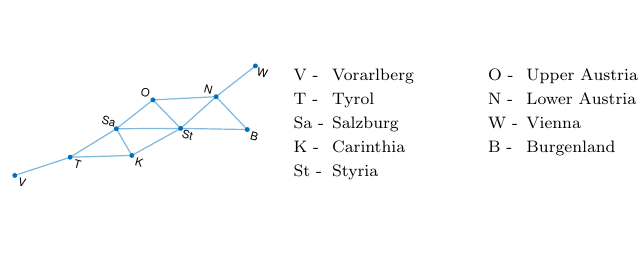}\vspace{-9pt}
	\caption{Compartment connectivity graph of the Austrian states \citep{cui2024deep}.}
	\label{fig:austria}
\end{figure}

We consider the same realistic setting as in \cite{cui2024deep}, with $ J = 9 $ compartments following the Austrian state adjacency map shown in Fig \ref{fig:austria}. 
We generate the synthetic data for $ T = 20 $ time steps, with fixed inhomogeneous initial states $ S_j(0) = 485 + j $ and $ I_j(0) = 15 - j $, for $ j = 1, 2, \ldots, 9 $.
Subsequently, we impose a Gaussian prior on the initial states $p(\bx_0) := \mathcal N(\bx_0;\boldsymbol{\mu}_0, \mI_{18})$ where $\boldsymbol{\mu}_0 = \big(S_1(0), I_1(0), \ldots, S_9(0), I_9(0) \big)$.
We aim to estimate the $ 18 $-dimensional states from the partial observations, which poses a challenging high-dimensional inference problem.

\begin{figure}[!ht]
	\centering
	\includegraphics{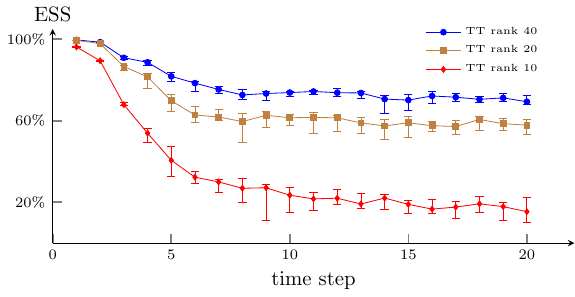}\vspace{-9pt}
	\caption{SIR model. The ESS for the joint posterior density versus time, computed using maximum TT ranks $r \in \{10, 20, 40\}$.}
	\label{fig:siress}
\end{figure}

\begin{figure}[!ht]
	\centering
	\includegraphics[scale=0.93]{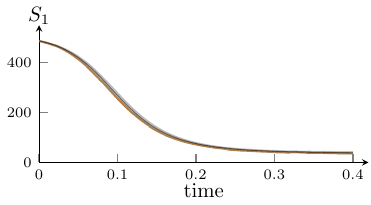}
	\includegraphics[scale=0.93]{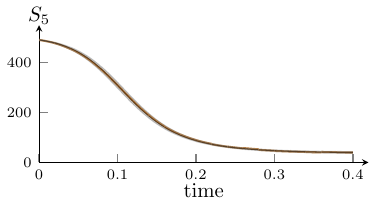}
	\includegraphics[scale=0.93]{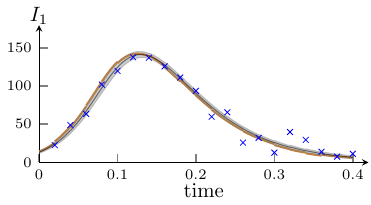}
	\includegraphics[scale=0.93]{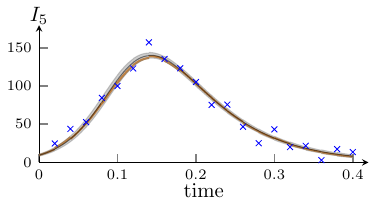}
	\caption{SIR model. The trajectories of the number of susceptible and infectious individuals in Vorarlberg (left) and Styria (right). The thick brown curves are the true state trajectories. The black curves represent the median posterior state path obtained by the approximate density $\hat p(\bx_{0:20} |\by_{1:20})$, and the shaded region is the credible interval bounded between 5\% and 95\% percentiles of the approximate smoothing densities.\label{fig:sirstate}}
\end{figure}

Similarly to the previous experiments, we employ Alg.~\ref{alg:stt} with linear preconditioning, the piecewise Lagrange polynomial with the number of degrees of freedom $ \ell  $ to be 33, and five ALS iterations.
Nevertheless, we increase the maximum TT rank to $r \in \{10, 20, 40\}$ to accommodate the larger state dimensions in this experiment.
Fig.~\ref{fig:siress} shows the change of ESS over time.
We observe that the large state dimension in this example requires TT ranks above $ r = 20 $ to achieve accurate results.
In Fig.~\ref{fig:sirstate}, we present the trajectories of the first and middle dimensions of the state (Vorarlberg and Styria), estimated from the approximate posterior state path density  $ \hat p( \bx_{0:20} |\by_{1:20}) $ using rank $ r = 40 $. 
We observe that the true dynamics are closely followed by the estimated trajectories, despite $ S_j $ being unobservable and the observations for $ I_j $ being far from the true dynamics.
These results demonstrate a strong dimension scalability of our TT-based sequential estimation methods in this case.

\subsection{Predator-prey model}

Finally, we consider the predator-prey model, which is a time-invariant dynamical system in the form of
\begin{equation} \label{eq:ppode}
	\left\{ \begin{array} {rl}
		\displaystyle \frac{dP}{dt} \; = \! & \displaystyle rP\Big(1-\frac{P}{K}\Big)-s\Big(\frac{P\,Q}{a+P}\Big)\vspace{4pt}\\
		\displaystyle \frac{dQ}{dt} \; = \! & \displaystyle u\Big(\frac{PQ}{a+P} - vQ\Big)
	\end{array} \right. ,
\end{equation}
where $P$ and $Q$ denote the prey population and the predator population, respectively. There are six parameters, $ \bth = (r, K, a, s, u, v) $, that control the behavior of the system. 
To formulate the state-space model, we represent $P$ and $Q$ as $\bx = (P,Q)$. This way, the dynamical system in \eqref{eq:ppode} can be equivalently expressed as
\[
	\displaystyle \frac{d\bx}{dt} = G(\bx; \bth), 
\]
where $G(\bx; \bth)$ is defined by the right-hand side of \eqref{eq:ppode}. 
Similarly to the previous SIR model, we discretize the dynamical system at discrete time points $t = k \Delta t$, where $\Delta t = 2$, and perturb the states using Gaussian noises $\varepsilon^{(x)}_k \sim \mathcal N(\mathbf{0}, 4 \,\mI_2) $. This way, the state transition process is given by
\[
\bX_k = \bX_{k\text{-}1} + \int_{(k\text{-}1) \Delta t}^{k \Delta t} G(\bx(t); \bth) \dd t + \varepsilon^{(x)}_k.
\]
We still use the explicit fourth-order Runge-Kutta formula with constant time step sizes $ \Delta t = 0.1 $ to numerically solve the above time integration problem. 
We observe $\bx$ at the same sequence of discrete time points, and the observations are perturbed by Gaussian noises $ \varepsilon^{(y)}_k \sim \mathcal N(\mathbf{0}, 4 \,\mI_2)$. 
This defines the observation process
\[
	\bY_{k} = \bX_{k} + \varepsilon^{(y)}_k.
\]
We generate synthetic observations using $ \bth = (0.6, 114, 25, 0.3, 0.5, 0.5) $ and  $\bx_0 = [50, 5]^\top$ up to the terminal time $T=20$. 
We impose a uniform prior on the parameters, $p(\bth):=\mathrm{uniform}(\bx; [\mathbf{a}, \mathbf{b}])$, where $[\mathbf{a}, \mathbf{b}] \equiv [a_1, b_1] \times \cdots \times [a_6, b_6]$ is a hypercube defined by $ \mathbf{a} = (0.1, 110, 20, 0.1, 0, 0) $ and $ \mathbf{b} = (1.1, 130, 30, 1.1, 1, 1) $. 
Furthermore, we impose a Gaussian prior on the initial states $p(\bx_0) := \mathcal N(\bx_0;\boldsymbol{\mu}_0, \mI_2)$ where $\boldsymbol{\mu}_0 = (50,5)$.

\begin{figure}[h]
	\centering
	\includegraphics[scale = 1]{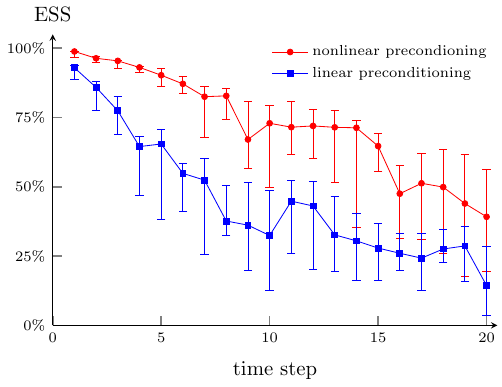}\vspace{-9pt}
	\caption{Predator-prey model. The ESS for the joint posterior density versus time, computed using different preconditioning techniques. }
	\label{fig:ppess}
\end{figure}

\begin{figure}[h]
	\centering\hfill
	\includegraphics{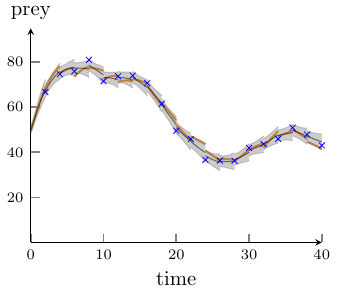}\hfill
	\includegraphics{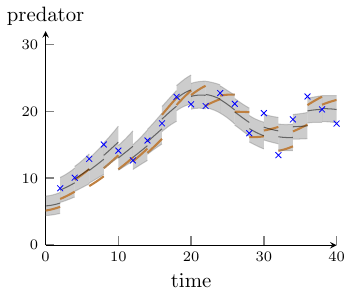}\hfill\vspace{-9pt}
	\caption{Predator-prey model. The thick brown curves are the true trajectories, discontinuity indicates the state noises at discrete time steps, and blue crosses are the observations.  The black curves represent the median posterior state path obtained by the approximate density $\hat p(\bx_{0:20} |\by_{1:20})$, and the shaded region is the credible interval bounded between 5\% and 95\% percentiles of the approximate smoothing densities. }
	\label{fig:ppstate}
\end{figure}

We first employ Alg.~\ref{alg:stt} with linear preconditioning, the piecewise Lagrange polynomial with the number of degrees of freedom $ \ell $ to be 33, a maximum TT rank $ r =20 $, and five ALS iterations. 
The accompanying path estimation algorithm (Alg.~\ref{alg:stt_smooth}) is used to generate weighted samples from the resulting TT-based approximations. In Fig.~\ref{fig:ppess}, the ESSs for the joint posterior density of the parameters and all the states are represented by square markers in blue. 
In this case, the ESS decreases rapidly. 
Instead of brutally increasing the rank to enhance the approximation power of TT, we keep the same TT rank and apply the nonlinear preconditioning technique. 
Here we use the bridging density \eqref{eq:bridging} with $ \beta_\pi = \beta_f = \beta_g = 0.4 $ and a standard multivariate Gaussian reference density.
With nonlinear preconditioning, the ESSs are shown by red circles in Fig.~\ref{fig:ppess}. 
We observe that nonlinear preconditioning significantly increases the ESS, where it remains at about $ 40\% $ after 20 steps. 
In Fig.~\ref{fig:ppstate}, we also present the trajectories of the states estimated from the approximate posterior state path density $ \hat p(\bx_{0:20} |\by_{1:20}) $ with nonlinear preconditioning and the true state trajectory. 
The curves between the two adjacent states $ \bx_{k\textit{-}1} $ and $ \bx_k $ are the solution of the dynamical system \eqref{eq:ppode} with initial condition $ \bx = \bx_{k\textit{-}1} $. 
We observe that the true trajectories are well followed by the approximations.

\section{Conclusion and further extensions}\label{sec:conclusion}

We present new TT-based methods to sequentially learn the posterior distributions of states and parameters of state-space models. Our main innovation is the recursive approximation of posterior densities using TT rather than relying on (weighted) particle representations. Using the square-root approximation technique (cf. Section \ref{sec:estimation}), we derive conditional KR rearrangements that map reference random variables to the approximated posterior random variables. As a result, this defines the particle filtering, path estimation and particle smoothing algorithms accompanying the recursive TT-based approximations. 

This work opens the door to many future research directions. For example, one can integrate our TT-based methods into SMC methods \citep{chopin2013smc2,crisan2018nested}, in which TT-based methods can potentially provide more efficient particle filters and MCMC proposal kernels for SMC. The KR rearrangements offered by our methods also open the door to integrating other structured particle sets---for instance, quasi Monte Carlo points (see \cite{dick2013high} and references therein)---into sequential learning algorithms to improve the rate of convergence of estimators. This integration can significantly complement the Hilbert space-filling curve technique used by the sequential quasi Monte Carlo method \citep{gerber2015sequential}.

Further research is needed to understand the rank structure of TT decompositions in state-space models---particularly on the impact of the Markov property---which may provide {\it a posteriori} error bounds to certify the algorithms presented here. The function approximation perspective of the work presented here is not limited to the TT decomposition. Other function approximation methods based on separable bases---for example, Gaussian processes and radial basis functions \citep{wendland2004scattered,williams2006gaussian} with product-form kernels, multivariate wavelet and Fourier bases \citep{daubechies1992ten,de1993construction,devore1988interpolation,mallat1989multiresolution}, and sparse grids and spectral polynomials \citep{bungartz2004sparse,shen2011spectral,xiu2002wiener}---can also be used to approximate posterior densities and compute their marginal densities. See \cite{cui2023self} for the preliminary investigations in stationary inference problems. This offers a pathway to designing problem-specific function approximation methods for sequential learning. For instance, state transition processes governed by certain partial differential equations may yield a hyperbolic-cross structure \citep{dung2018hyperbolic} suitable for sparse spectral polynomials.

Dimension scalability is a limitation of the TT decomposition and general transport-map methods. Although the complexity of building the TT decomposition can be dimension independent for functions equipped with rapid decaying weights \citep{griebel2021analysis}---which is the case for many high- or infinite-dimensional problems \citep{IP:Stuart_2010}---it is still computationally prohibitive to operate with the apparent state and parameter dimensions for very high-dimensional systems such as weather forecasts.  A possibility of reducing the number of variables in the approximation problem is to use the conditional Gaussian structure of certain state-space models \citep{chen2022conditional,chen2018rigorous,chen2018conditional} to marginalize high-dimensional conditional Gaussian variables analytically. Then, one only needs to apply TT-based approximations to the rest of the non-Gaussian variables. For a broader range of state-space models, there may exist approximate conditional Gaussian structures and similar structures that allow for analytical marginalization. In ongoing work, we are extending likelihood-informed subspace methods \citep{DimRedu:Cui_etal_2014,cui2022unified, zahm2022certified} to state-space models to identify these intrinsic structures.

\section*{Acknowledgments}
We would like to thank Sergey Dolgov, Rob Scheichl and Olivier Zahm for many fruitful discussions about the tensor trains and sparse polynomials. This work was supported in part by the Australian Research Council under the grant DP210103092.

\appendix

\section{$f$-divergence of marginal random variables}\label{appendix:A}

Commonly used statistical divergences such as the Kullback--Leibler divergence, the total variation distance and the squared Hellinger distance are instances of the  $f$-divergence. Briefly, given a convex function $f(\cdot)$, the $f$-divergence is defined as
\[
\df{p}{q} = \int f\Big(\frac{p(\bx)}{q(\bx)}\Big) q(\bx) \dd \bx.  
\] 
The following lemma bounds the $f$-divergence of marginal random variables by the corresponding $f$-divergence of joint random variables. 

\begin{lemma} \label{lemma:marginal}
	Let $ p_1(\bx_1, \bx_2) $ and $ p_2(\bx_1, \bx_2) $ be two joint densities, and $ \bar p_1(\bx_1) $ and $ \bar p_2(\bx_1) $ be the marginal densities of $p_1$ and $p_2$, respectively.
	The $ f $-divergence of  $ \bar p_1$ from $ \bar p_2$ is bounded from above by that of $ p_1$ from $p_2$, i.e., $\df{\bar p_1}{\bar p_2} \leq \df{p_1}{p_2}$.
\end{lemma}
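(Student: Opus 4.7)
The plan is to apply Jensen's inequality to the ratio of marginals, exploiting the convexity of $f$ that defines the divergence. The standard trick is to express the marginal ratio as a conditional expectation of the joint ratio under $p_2(\bx_2 \mid \bx_1)$, and then push the convex function $f$ inside.

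First I would write out the marginal ratio by a disintegration of $p_1$ with respect to $p_2$:
\[
\frac{\bar p_1(\bx_1)}{\bar p_2(\bx_1)} = \frac{1}{\bar p_2(\bx_1)}\int p_1(\bx_1,\bx_2) \dd \bx_2 = \int \frac{p_1(\bx_1,\bx_2)}{p_2(\bx_1,\bx_2)} \, p_2(\bx_2 \mid \bx_1) \dd \bx_2,
\]
where $p_2(\bx_2 \mid \bx_1) = p_2(\bx_1,\bx_2)/\bar p_2(\bx_1)$ is the conditional density. The right-hand side is an expectation of the joint density ratio under the probability measure $p_2(\cdot \mid \bx_1)$.

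Next I would apply Jensen's inequality to the convex function $f$, yielding
\[
f\!\left(\frac{\bar p_1(\bx_1)}{\bar p_2(\bx_1)}\right) \leq \int f\!\left(\frac{p_1(\bx_1,\bx_2)}{p_2(\bx_1,\bx_2)}\right) p_2(\bx_2 \mid \bx_1) \dd \bx_2.
\]
Multiplying both sides by $\bar p_2(\bx_1)$, integrating over $\bx_1$, and using $p_2(\bx_2 \mid \bx_1)\,\bar p_2(\bx_1) = p_2(\bx_1,\bx_2)$ collapses the right-hand side into $\df{p_1}{p_2}$, while the left-hand side is exactly $\df{\bar p_1}{\bar p_2}$ by definition.

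There is no real obstacle, only some care about measure-theoretic bookkeeping: one must ensure $p_2(\bx_1) > 0$ whenever $p_2(\bx_1,\bx_2) > 0$ so the conditional density is well-defined on the relevant set, and handle the convention $0 \cdot f(0/0) = 0$ in the usual way so that sets where $\bar p_2(\bx_1) = 0$ contribute nothing. This is standard; the inequality is essentially the data-processing inequality for $f$-divergences specialized to the marginalization channel, so the argument is short and the result then plugs directly into the propagation bounds in Propositions~\ref{prop:tt_L2_propagation} and~\ref{prop:L2_propagation}.
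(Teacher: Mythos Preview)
Your proposal is correct and follows essentially the same approach as the paper: both arguments rewrite the integral against the conditional density $p_2(\bx_2\mid\bx_1)$ and apply Jensen's inequality to the convex function $f$. The only cosmetic difference is direction---you start from $\df{\bar p_1}{\bar p_2}$ and bound it above, while the paper starts from $\df{p_1}{p_2}$ and bounds it below---but the computation is identical.
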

	
\begin{proof}
	Since the function $f$ is convex, applying Jensen's inequality, we have
	\begin{align*}
		\df{p_1}{p_2} & = \mathbb E_{p_2(\bx_1, \bx_2)} \bigg[ f\Big(\frac{p_1(\bX_1,\bX_2)}{p_2(\bX_1, \bX_2)}\Big) \bigg]\\
		& = \int \Big( \int f \Big(\frac{p_1(\bx_1,\bx_2)}{p_2(\bx_1, \bx_2)}\Big) \frac{p_2(\bx_1, \bx_2) }{\bar p_2(\bx_1) } \dd \bx_2 \Big) \bar p_2(\bx_1) \dd \bx_1\\
		& \geq \int f \Big( \int \frac{p_1(\bx_1,\bx_2)}{p_2(\bx_1, \bx_2)} \frac{p_2(\bx_1, \bx_2) }{\bar p_2(\bx_1) } \dd \bx_2 \Big) \bar p_2(\bx_1) \dd \bx_1\\
		& = \int f \Big( \int \frac{p_1(\bx_1,\bx_2)}{\bar p_2(\bx_1) } \dd \bx_2 \Big) \bar p_2(\bx_1) \dd \bx_1\\
		& = \int f \Big( \frac{\bar p_1(\bx_1)}{\bar p_2(\bx_1) } \Big) \bar p_2(\bx_1) \dd \bx_1 = \df{\bar p_1}{\bar p_2},
	\end{align*}
    which concludes the proof.
\end{proof}

When the densities are only known up to some constants, the $ L^2 $ distance between the square root of the unnormalized marginal densities is also bounded by that of the unnormalized joint densities. This is shown in the following lemma.
\begin{lemma} \label{lemma:L2_sqrt}
	Let $ \pi_1(\bx_1, \bx_2) $ and $ \pi_2(\bx_1, \bx_2) $ be two unnormalized densities, and $ \bar \pi_1(\bx_1) = \int \pi_1(\bx_1, \bx_2) \dd \bx_2 $ and $ \bar \pi_2(\bx_1) = \int \pi_2(\bx_1, \bx_2) \dd \bx_2 $ be their corresponding unnormalized marginal densities.
	The $ L^2 $ distance between $ \surd \bar \pi_1 $ and $ \surd \bar \pi_2 $ is bounded from above by the $L^2$ distance between $\surd \pi_1 $ and $\surd \pi_2$, i.e.,
	\(
		\lerr{ \surd \bar \pi_1 - \surd \bar \pi_2} \leq \lerr{\surd \pi_1 - \surd \pi_2}.
	\)
\end{lemma}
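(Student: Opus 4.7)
The plan is to proceed by direct expansion and then reduce the inequality to a pointwise (in $\bx_1$) statement that is handled by Cauchy--Schwarz. Concretely, I would square both sides of the claimed inequality and exploit the identity $(\surd a - \surd b)^2 = a + b - 2\surd{ab}$, so that after integration the ``linear'' terms on either side become precisely $\int \bar\pi_1(\bx_1)\dd \bx_1 + \int \bar\pi_2(\bx_1)\dd \bx_1$ on the marginal side, and $\iint \pi_1(\bx_1,\bx_2)\dd \bx_1 \dd \bx_2 + \iint \pi_2(\bx_1,\bx_2)\dd \bx_1 \dd \bx_2$ on the joint side. By Fubini these are equal. The inequality therefore collapses to comparing the cross terms, and it suffices to show
\[
\int \sqrt{\pi_1(\bx_1,\bx_2)\,\pi_2(\bx_1,\bx_2)}\,\dd \bx_2 \;\leq\; \sqrt{\bar\pi_1(\bx_1)\,\bar\pi_2(\bx_1)}
\]
for (almost) every $\bx_1$, since integrating this pointwise bound over $\bx_1$ gives the desired comparison between the two cross terms (with the correct sign after multiplying by $-2$).

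The pointwise inequality is an immediate application of the Cauchy--Schwarz inequality in $L^2(\dd \bx_2)$ applied to $\surd{\pi_1(\bx_1,\cdot)}$ and $\surd{\pi_2(\bx_1,\cdot)}$: the left-hand side is the inner product of these two non-negative functions, while $\bar\pi_j(\bx_1) = \|\surd{\pi_j(\bx_1,\cdot)}\|_{L^2(\dd \bx_2)}^2$. This is the only nontrivial step, and no obstacle is anticipated beyond ensuring the integrands are well-defined (handled by the standing assumption that $\pi_j$ are non-negative and integrable in $\bx_2$ for every $\bx_1$, so that the marginals exist).

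Putting the two ingredients together yields
\[
\|\surd{\bar\pi_1} - \surd{\bar\pi_2}\|_{L^2}^2 \;\leq\; \|\surd{\pi_1} - \surd{\pi_2}\|_{L^2}^2,
\]
and taking square roots concludes the proof. I note that this is essentially the unnormalized analogue of Lemma~\ref{lemma:marginal} specialized to the squared Hellinger functional, and it is exactly the estimate invoked in the proof of Proposition~\ref{prop:L2_propagation} to bound the marginal $L^2$ square-root distance by the joint one.
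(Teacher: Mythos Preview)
Your argument is correct. The paper takes a slightly different route: it rewrites $\lerr{\surd\pi_1-\surd\pi_2}^2$ as $\iint (\sqrt{\pi_1/\pi_2}-1)^2\,\pi_2\,\dd\bx_2\,\dd\bx_1$, recognizes the integrand as $f(\pi_1/\pi_2)$ for the convex function $f(r)=(\surd r-1)^2$, and applies Jensen's inequality to the inner integral against the probability measure $\pi_2(\bx_1,\cdot)/\bar\pi_2(\bx_1)\,\dd\bx_2$, thereby recycling the proof pattern of Lemma~\ref{lemma:marginal}. Your direct expansion plus Cauchy--Schwarz reaches the same pointwise inequality $\int\sqrt{\pi_1\pi_2}\,\dd\bx_2\le\sqrt{\bar\pi_1\bar\pi_2}$ (indeed, Jensen applied to $(\surd r -1)^2$ reduces, after cancellation, to exactly this Cauchy--Schwarz bound). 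The paper's framing has the advantage of unifying the lemma with the general $f$-divergence monotonicity statement, while your version is more self-contained and avoids introducing the ratio $\pi_1/\pi_2$, which is a minor nuisance when $\pi_2$ can vanish.
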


\begin{proof}
	The $L^2$ distance between $\surd \pi_1 $ and $\surd \pi_2$ can be expressed as
	\begin{align*}
		\lerr{ \surd  \pi_1 - \surd \pi_2}^2  & = \int \int \big( \surd \pi_1(\bx_1, \bx_2) - \surd \pi_2(\bx_1, \bx_2) \big)^2 \dd \bx_1 \dd \bx_2 \\
		& = \int \int \bigg( \sqrt{\frac{ \pi_1(\bx_1, \bx_2)}{\pi_2(\bx_1, \bx_2)}} - 1 \bigg)^2  \pi_2(\bx_1, \bx_2) \dd \bx_1 \dd \bx_2 \\
		& = \int \int \bigg( \sqrt{\frac{ \pi_1(\bx_1, \bx_2)}{\pi_2(\bx_1, \bx_2)}} - 1 \bigg)^2  \frac{\pi_2(\bx_1, \bx_2)}{\bar\pi_2 (\bx_1)} \dd \bx_2 \bar\pi_2(\bx_1) \dd \bx_1 .
	\end{align*}
	Since $ f(r) = (\surd r - 1)^2, r \geq 0 $ is a convex function, applying Jensen's inequality to the inner integral over $\bx_2$, the rest of the proof follows the same steps of Lemma \ref{lemma:marginal}.
\end{proof}

Lemma \ref{lemma:L2_sqrt} is useful to establish the error bound between two unnormalized marginal densities given that between the joint densities. However, Lemma \ref{lemma:L2_sqrt} implicitly assumes that both unnormalized joint functions (densities) are non-negative by construction, which is generally not true when $ \pi_2 $ is a function approximation of $ \pi_1 $. The following lemma does not restrict the approximate functions to be non-negative, and gives a similar result under the $L^1$ distance, which is analogous to the total variation distance for normalized densities.

\begin{lemma} \label{lemma:L1}
	Let $ \pi_1(\bx_1, \bx_2) $ be an unnormalized density and $ \bar \pi_1(\bx_1) = \int \pi_1(\bx_1, \bx_2) \dd \bx_2 $ be the corresponding marginal density. Suppose $ \pi_1(\bx_1, \bx_2) $ has an approximation $ \pi_2(\bx_1, \bx_2) $, which does not preserve the non-negativity, and let  $ \bar \pi_2(\bx_1) = \int \pi_2(\bx_1, \bx_2) \dd \bx_2 $ be the corresponding marginal function.
	The $ L^1 $ distance between $\bar \pi_1 $ and $\bar \pi_2 $ is bounded from above by that between $\pi_1 $ and $\pi_2$, i.e.,
	\(
		\| \bar \pi_1 - \bar \pi_2 \|_{L_1} \leq \| \pi_1 - \pi_2 \|_{L_1}.
	\)
\end{lemma}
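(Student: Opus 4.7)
The plan is to establish the bound by pushing the absolute value defining the $L^1$ norm past the integration that produces the marginals, and then applying Fubini's theorem. Unlike the square-root $L^2$ setting of Lemma~\ref{lemma:L2_sqrt}, no non-negativity assumption on $\pi_2$ is needed here, because the absolute value in the $L^1$ norm accommodates signed integrands directly.

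Concretely, first I would expand the left-hand side using the definition of the marginals and linearity of integration, writing
\[
\ler{\bar\pi_1 - \bar\pi_2} = \int \Bigl| \int \bigl(\pi_1(\bx_1,\bx_2) - \pi_2(\bx_1,\bx_2)\bigr) \dd\bx_2 \Bigr| \dd\bx_1.
\]
Next I would apply the standard triangle inequality for integrals (equivalently, Jensen's inequality for the convex function $|\cdot|$) to pull the outer absolute value inside the $\dd\bx_2$ integration, yielding the upper bound $\int\int |\pi_1(\bx_1,\bx_2) - \pi_2(\bx_1,\bx_2)|\dd\bx_2\dd\bx_1$. Finally, Fubini's theorem identifies this iterated integral with $\ler{\pi_1 - \pi_2}$, which completes the argument.

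There is no substantive obstacle; the essential content is the elementary inequality $\bigl|\int f\bigr| \leq \int |f|$. The only point worth underlining is why this approach sidesteps the non-negativity hypothesis implicit in Lemma~\ref{lemma:L2_sqrt}: there one had to work with $\surd\pi_1$ and $\surd\pi_2$, and Jensen's inequality applied to the convex map $(\surd r - 1)^2$ required non-negative ratios; here the absolute value itself plays the role of the convex function and acts directly on the signed difference $\pi_1 - \pi_2$, so the argument carries through even when the approximation $\pi_2$ takes negative values.
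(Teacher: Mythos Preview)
Your proof is correct. It is, however, more direct than the paper's own argument. The paper mimics the template of Lemma~\ref{lemma:marginal}: it rewrites $|\pi_1-\pi_2|$ as $\bigl|\tfrac{\pi_2}{\pi_1}-1\bigr|\pi_1$, factors out the conditional density $\pi_1(\bx_1,\bx_2)/\bar\pi_1(\bx_1)$, and then applies Jensen's inequality to the convex function $f(r)=|r-1|$ with respect to that conditional. Your route bypasses the ratio manipulation entirely and just uses the triangle inequality $\bigl|\int f\bigr|\le\int|f|$ on the signed difference, which is both shorter and avoids the (minor) nuisance of division by $\pi_1$ where it may vanish. The paper's version has the virtue of exhibiting the result as a literal instance of the $f$-divergence monotonicity pattern established in Lemma~\ref{lemma:marginal}, keeping all three lemmas structurally parallel; your version trades that uniformity for a cleaner one-line argument.
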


\begin{proof}
	The $L^1$ distance between $\pi_1 $ and $\pi_2$ can be expressed as
	\begin{align*}
		\| \pi_1 - \pi_2\|_{L_1}  & = \int \int \big|  \pi_1(\bx_1, \bx_2) - \pi_2(\bx_1, \bx_2) \big| \dd \bx_1 \dd \bx_2 \\
		& = \int \int \bigg| \frac{ \pi_2(\bx_1, \bx_2)}{\pi_1(\bx_1, \bx_2)} - 1 \bigg|  \pi_1(\bx_1, \bx_2) \dd \bx_1 \dd \bx_2 \\
		& = \int \int \bigg| \frac{ \pi_2(\bx_1, \bx_2)}{\pi_1(\bx_1, \bx_2)} - 1 \bigg|  \frac{\pi_1(\bx_1, \bx_2)}{\bar\pi_1 (\bx_1)} \dd \bx_2 \bar\pi_1(\bx_1) \dd \bx_1 .
	\end{align*}
	Since $ f(r) = |r - 1| $ is a convex function, applying Jensen's inequality to the inner integral over $\bx_2$, the rest of the proof follows the same steps of Lemma \ref{lemma:marginal}.
\end{proof}

\newcommand{\reals}{\mathbb R}
\newcommand{\vone}{\mathbf 1}
\newcommand{\mA}{\mathsf A}

\section{Posterior densities of the linear Kalman filter with unknown parameters} \label{appendix:kalman} We recall that the linear Kalman filter specifies the state and observation processes as
\begin{equation} 
	\left\{ 
	\begin{array}{rl}
		\bX_t - \mu  \; = \! & b \left(\bX_{\tminusone}-\mu\right) + a \, \varepsilon^{(x)}_{t}\\
		\bY_t  \; = \!  & \mC \bX_t + d \, \varepsilon^{(y)}_{t} 
	\end{array} \label{eq:app_kalman}
	\right. ,
\end{equation}
where $\varepsilon^{(x)}_t$ and $\varepsilon^{(y)}_t$ follow respectively independent Gaussian distributions $\mathcal{N}(0,\mI_m)$ and $\mathcal{N}(0, \mI_n)$, and $\mC \in \R^{n \times m}$ is the observation matrix. 
Here $\mI_{m}$ represents an $m\times m$ identity matrix.
The prior density of the initial state $ \bX_0 $ is given by $p(\bx_0|\mu) := \mathcal{N}(\bx_0;\mu \mathbf{1}_m, \mI_{m})$, where $\mathbf{1}_m$ is an $m$-dimensional vector filled with ones.  
Without loss of generality, we set $\mu$ to be zero to simplify notation used in the derivation. 
This way, the parameters controlling the model are collected as $ \bth = (a,b, d)$.
It is well known that the filtering density $(\bX_t | \bTh = \bth, \bY_{1:t} = \by{}_{1:t})$ and the smoothing density $(\bX_k | \bTh =\bth, \bY_{1:t} = \by{}_{1:t})$ for any $k < t$ are Gaussian conditioned on $\bTh = \bth$. See \cite{sarkka2013bayesian} and reference therein for details. Using this property, here we derive the analytical form of the posterior parameter density $ p(\bth|\by_{1:t})$.

As a starting point, we collect all the quantities from time $ 0 $ to $ t $ in vectorized forms,
\[
	\begin{array}{lll}
		\bar \bX & \hspace{-3pt} =[\bX_0;\ldots;\bX_t] & \hspace{-3pt} \in\reals^{m(t+1)} , \vspace{4pt} \\
		\bar \bY & \hspace{-3pt} =[\bY_{1};\ldots;\bY_{t}] & \hspace{-3pt} \in\mathbb{R}^{nt},  \vspace{4pt} \\
		\bar \varepsilon^{(x)} & \hspace{-3pt} = [\varepsilon^{(x)}_{0} ;\ldots ;\varepsilon^{(x)}_{t}] & \hspace{-3pt}\in\mathbb{R}^{m(t+1)}, \vspace{4pt} \\
		\bar \varepsilon^{(y)} & \hspace{-3pt} = [ \varepsilon^{(y)}_{1} ; \ldots ; \varepsilon^{(x)}_{t}] & \hspace{-3pt}\in\mathbb{R}^{nt}.
	\end{array}
\]
Corresponding to the collected states $\bX$ and the collected observations $\bY$, we define the joint state transition matrix $ \mA_{\bth} \in\reals^{m\left(t+1\right)\times m\left(t+1\right)} $ and the joint observation matrix $ \mH \in\reals^{nt\times m\left(t+1\right)} $ as
\[
		\mA_{\bth} =  \begin{bmatrix*}[r]
			a\,\mI_{m}\\
			-b\,\mI_{m} & \mI_{m}\\
			& -b\,\mI_{m} & \mI_{m}\\
			&  & \ddots & \ddots\\
			&  &  & -b\,\mI_{m} & \mI_{m}
		\end{bmatrix*} \quad \text{and} \quad
		\mH =  \begin{bmatrix*}[c]
			\mathsf 0 & \mC\\
			& \mathsf 0 & \mC\\
			&  & \mathsf 0 & \mC\\
			&  &  & \ddots & \ddots\\
			&  &  &  & \mathsf 0 & \mC
		\end{bmatrix*} ,
\]
respectively, where $ \mathsf 0 $ denotes a matrix of size $ n \times m $ filled with zeros.
Then, the state and observation processes defined in \eqref{eq:app_kalman} can be equivalently expressed as
\begin{equation}
	\bar \bX = a \, \mA_{\bth}^{-1} \bar\varepsilon^{(x)}, \quad \bar \bY = \mH \bar \bX + d \, \bar\varepsilon^{(y)}. \label{eq:XandY}
\end{equation}
The joint posterior density of the state path $\bX_{0:t}\equiv \bar\bX$ and the parameters $\bTh$ takes the form
\begin{equation}\label{eq:app_joint}
p(\bar\bx,\bth|\bar\by) = \frac{1}{p(\bar\by)}  p(\bar\by|\bar\bx,\bth)p(\bar\bx|\bth)p(\bth) .
\end{equation}
Our goal is to derive the posterior parameter density 
\begin{equation}\label{eq:app_marginal_post}
	p(\bth|\bar\by) = \int p(\bar\bx,\bth|\bar\by) \dd \bar\bx = \frac{1}{p(\bar\by)} p(\bth)p(\bar\by | \bth), 
\end{equation}
where $p(\bar\by | \bth)$ is the marginal likelihood defined as
\begin{equation*}\label{eq:app_marginal_like}
	p(\bar\by | \bth) = \int p(\bar\by|\bar\bx,\bth)p(\bar\bx|\bth)  \dd \bar\bx.
\end{equation*}

Combining the two equations in \eqref{eq:XandY}, we obtain
\[
	\bar \bY = a\mH  \, \mA_{\bth}^{-1} \bar\varepsilon^{(x)} + d \, \bar\varepsilon^{(y)}.
\]
Note that $ \bar\varepsilon^{(x)} $ and $ \bar\varepsilon^{(y)} $ are independent Gaussian random variables.
Thus, conditioned on the parameters $ \bTh = \bth $, the observations $ \bar \bY $ follow a multivariate Gaussian distribution:
\[
	(\bar \bY | \bTh = \bth) \sim \mathcal N(\bar \by ; 0, \bar \Sigma_{\bth}) \quad \text{where} \quad \bar \Sigma_{\bth} = a^2 \mH   \mA_{\bth}^{-1}  \mA_{\bth}^{-\top} \mH^\top + d^2 \, \mI_{nt}.
\]
To be more specific, we have 
\[
	p(\bar\by | \bth) = (2\pi)^{-nt/2} \, \det(\bar \Sigma{}_{\bth})^{-1/2} \exp\Big( - \frac{1}{2} \bar\by^{\top} \bar \Sigma{}_{\bth}^{-1} \bar\by \Big).
\]
The explicit form of the posterior parameter density follows from \eqref{eq:app_marginal_post}.

\end{document}